\font\emailfont=cmtt10
\newtheorem{theorem}{Theorem}[section]
\newtheorem{lemma}[theorem]{Lemma}
\newtheorem{proposition}[theorem]{Proposition}
\newtheorem{corollary}[theorem]{Corollary}
\theoremstyle{definition} \newtheorem{definition}[theorem]{Definition}
\theoremstyle{remark} \newtheorem{remark}[theorem]{Remark}
\newcommand{\Q}{\mathbb{Q}} \newcommand{\Z}{\mathbb{Z}}
\newcommand{\N}{\mathbb{N}}
\title[{The rational Khovanov homology of $3$-strand pretzel links}]
{The rational Khovanov homology of $3$-strand pretzel links}
\author[Andrew Manion]{Andrew Manion} \address{Department of
  Mathematics, Princeton University, New Jersey 08544 \newline
  \indent{\emailfont{amanion@math.princeton.edu}}} \thanks{The author
  was supported by the Department of Defense (DoD) through the
  National Defense Science and Engineering Graduate Fellowship (NDSEG)
  Program.}
\begin{document}

\begin{abstract}
  The $3$-strand pretzel knots and links are a well-studied source of
  examples in knot theory. However, while there have been computations
  of the Khovanov homology of some sub-families of $3$-strand pretzel
  knots, no general formula has been given for all of them. We give a
  general formula for the unreduced Khovanov homology of all
  $3$-strand pretzel links, over the rational numbers.
\end{abstract}

\maketitle
\section{Introduction}

The goal of this paper is to compute the unreduced Khovanov homology,
over $\Q$, of all $3$-strand pretzel links. The literature contains
computations for some pretzel knots; in \cite{Suzuki}, Suzuki computes
the Khovanov homology of $(p,2-p,-r)$ pretzel knots with $p \geq 9$
odd and $r \geq 2$ even. More generally, the knots considered in
\cite{Suzuki} are quasi-alternating (see \cite{CK} and \cite{Greene}),
and for quasi-alternating links the Khovanov homology can be computed
directly from the Jones polynomial and signature (see \cite{MO}). In
\cite{Starkston}, Starkston considers a family of
non-quasi-alternating knots, the $(-p,p,q)$ pretzel knots for odd $p$
and $q \geq p$. She computes the Khovanov homology of many of
them. However, the Khovanov homology of most non-quasi-alternating
pretzel knots has not appeared in the literature. We will complete
this computation for all (non-quasi-alternating) pretzel links, over
$\Q$.

As one may expect, we use the unoriented skein exact sequence in
Khovanov homology; see \cite{KPKH} for a discussion of this
sequence. We will use the sequence in an inductive argument,
unraveling strands of a pretzel link one crossing at a time. In order
to deal with all pretzel links, one must structure the induction
carefully, as we will see. One must also use the Lee spectral sequence
(see \cite{Lee}) to help with many cases of the inductive step.

Before beginning, we will briefly review our conventions on $3$-strand
pretzel links. The $(l,m,n)$ pretzel link will be denoted
$P(l,m,n)$. (The standard letters to use are $p$, $q$, and $r$, but we
have chosen the letters $l$, $m$, and $n$ instead since $q$ will be
used for the quantum grading on Khovanov homology.) The knot
$P(-3,5,7)$ is shown in Figure~\ref{357}. It should remind the reader
of the general form. For the link $P(l,m,n)$, positive values of $l$,
$m$, and $n$ represent right-handed strands, and negative values
represent left-handed strands. Note that $P(l,m,n)$ is a knot when
zero or one of $\{l,m,n\}$ are even, a two-component link when two of
$\{l,m,n\}$ are even and one is odd, and a three-component link
otherwise. The ordering of $l$, $m$, and $n$ does not matter; it is
clear that cyclic permutations of the three strands do not change the
link, and transpositions simply amount to turning the link on its
head.

\begin{figure}
  \begin{center}
    \epsfbox{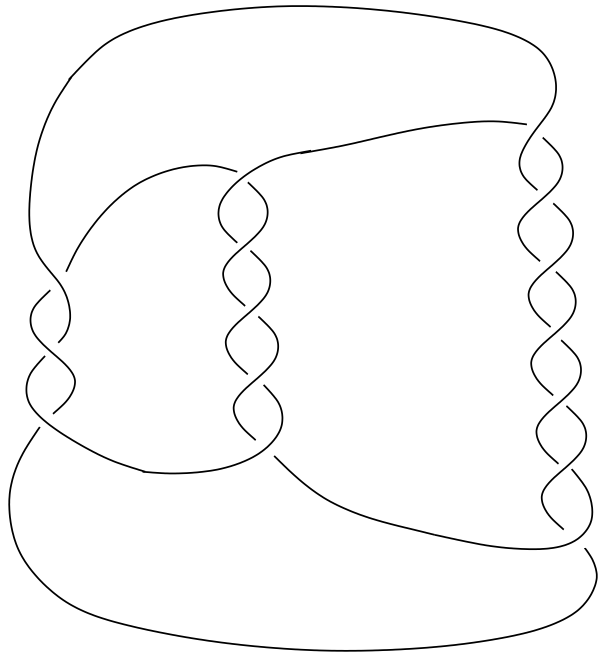}
  \end{center}
  \caption{\label{357} The $(-3,5,7)$ pretzel knot.}
\end{figure}

For quasi-alternating links $L$, the Khovanov homology was computed to
be thin in \cite{MO}. It depends only on the Jones polynomial and
signature of $L$. If $l$, $m$, and $n$ are all positive, then
$P(l,m,n)$ is actually alternating, so its Khovanov homology is known
(the Jones polynomials of pretzel links can be found in
\cite{Landvoy}). Also, mirroring the knot $P(l,m,n)$ gives
$P(-l,-m,-n)$. Since Khovanov homology enjoys a symmetry under
mirroring (the quantum and homological gradings are replaced by their
negatives), we only need consider $P(-l,m,n)$ when $l$, $m$, and $n$
are positive integers.

Champanerkar and Kofman~\cite{CK} determined the quasi-alternating
status of most pretzel links, and Greene~\cite{Greene} finished the
rest. The following special case of Greene's results will be all we
need:

\begin{theorem}[Greene \cite{Greene}]\label{qa}
  $P(-l,m,n)$ is quasi-alternating if and only if $l > \min\{m,n\}$.
\end{theorem}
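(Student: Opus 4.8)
The theorem is a biconditional, and I would prove its two halves by entirely different means, the constructive ``if'' direction being the easier one. For ``$\Leftarrow$'', suppose (after the obvious relabeling, which is harmless since the order of $l,m,n$ is immaterial) that $m\le n$, so the hypothesis reads $l\ge m+1$. The plan is to apply the twisting technique of Champanerkar--Kofman: a twist region of a quasi-alternating link may be enlarged by adding crossings of the sign already present there, and the enlarged link is again quasi-alternating --- for same-sign twists the determinant inequalities demanded by the quasi-alternating condition hold automatically, the determinant being monotone in the number of crossings. As seed one takes $P(-l,1,1)$, which for $l\ge2$ reduces to a two-bridge knot (of determinant $2l-1$), hence is alternating and is quasi-alternating at each of its two unit crossings. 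Enlarging those two unit twist regions in turn, to $m$ and then to $n$ crossings, produces $P(-l,m,n)$ and certifies it quasi-alternating. One records along the way $\det P(-l,k,1)=k(l-1)+l$ and $\det P(-l,m,k)=k(l-m)+lm$ (from $\det P(a,b,c)=|ab+bc+ca|$); these are positive and strictly increasing in $k$ precisely because $l>m$, so this direction is bookkeeping once the seed and the twisting lemma are in hand.

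The converse is where the real work lies, since being quasi-alternating asserts the existence of \emph{some} special diagram, so any negative result must be read off an invariant. I would invoke the two standard obstructions: a quasi-alternating link $L$ has thin Khovanov homology (over $\Q$, say), and its double branched cover $\Sigma_2(L)$ is a Heegaard Floer $L$-space (Ozsv\'ath--Szab\'o). So fix $P(-l,m,n)$ with $l\le\min\{m,n\}$. First identify $\Sigma_2(P(-l,m,n))$ as the Seifert fibered space over $S^2$ with three exceptional fibers whose Seifert data are read off from $-l,m,n$; in particular $|H_1|=|mn-l(m+n)|=\det$, which can vanish. Feeding this into the classification of Seifert fibered $L$-spaces --- equivalently the Jankins--Neumann/Naimi criterion for the absence of a horizontal foliation --- one checks that $\Sigma_2(P(-l,m,n))$ fails to be an $L$-space throughout the range $l\le\min\{m,n\}$, with the sole exceptions of the pretzels whose branched cover is a spherical space form (these force $l=2$, with $\{l,m,n\}$ one of $\{2,2,k\}$, $\{2,3,3\}$, $\{2,3,4\}$, $\{2,3,5\}$). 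Among the exceptions, $P(-2,3,3)=T(3,4)$ and $P(-2,3,5)=T(3,5)$ have thick Khovanov homology and are excluded by the thinness obstruction; whatever remains --- links that are thin and yet have an $L$-space branched double cover --- is precisely what Greene disposes of, with a finer input such as the constraint on the correction terms of $\Sigma_2(L)$ imposed by the sharp negative-definite $4$-manifold it must bound.

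The main obstacle, and the technical heart of the argument, is the case analysis in the converse direction: one must pin down the Seifert invariants of $\Sigma_2(P(-l,m,n))$ with all sign and orientation conventions correct, handle the degenerate subcases by hand (a unit twist region makes a fiber regular and collapses $\Sigma_2$ to a lens space or connected sum; the determinant-zero case gives $b_1(\Sigma_2)>0$ and hence trivially no $L$-space), and then show that the rather opaque $L$-space criterion for a three-fiber Seifert space really does collapse to precisely the threshold $l=\min\{m,n\}$ together with the short spherical list above. A secondary nuisance is the bookkeeping with mirrors, with the handful of very small pretzels that reduce to simpler links, and with the knot / two-component / three-component trichotomy, so as to be certain that the reduction to positive $l,m,n$ and the obstructions used do not silently omit some family of links.
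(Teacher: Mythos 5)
The paper does not prove this statement at all: it is a verbatim citation of Greene's theorem (with the earlier cases due to Champanerkar and Kofman), quoted with attribution and used as a black-box input for the rest of the argument. There is therefore no internal proof to compare yours against, and your attempt is really a sketch of Greene's theorem rather than of anything in this paper.

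Taken as a stand-alone sketch, the ``if'' direction is plausible in outline: the Champanerkar--Kofman twisting lemma applied to a 2-bridge seed is the standard route, and your determinant bookkeeping is correct and stays inside the inductive range. (One small debt: quasi-alternating \emph{at a crossing} is a diagram-level condition, and the two unit positive crossings of $P(-l,1,1)$ are not sitting in an alternating diagram in the obvious pretzel picture, so you owe an isotopy exhibiting them in a diagram certifying quasi-alternation before the twisting lemma applies.) The converse, however, contains a substantive error. You claim that $\Sigma_2(P(-l,m,n))$ fails to be an $L$-space throughout $l\le\min\{m,n\}$, with only a short spherical-space-form list of exceptions, so that the Jankins--Neumann criterion plus Khovanov thinness plus ``a finer input'' mops up. In fact the boundary family $l=\min\{m,n\}$ --- exactly the family Champanerkar and Kofman could not resolve --- contains an infinite subfamily of links that are homologically thin \emph{and} whose branched double covers are $L$-spaces, so every previously known obstruction fails for them. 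Producing a new obstruction for precisely this family, via Donaldson's diagonalization theorem applied to the two negative-definite $4$-manifolds that $\Sigma_2(L)$ bounds (with the Heegaard Floer correction terms as input), is the content of Greene's paper, not a footnote. Your sketch thus mislocates the difficulty: the ``only if'' direction is not an $L$-space check followed by case cleanup, and the step you compress into ``a finer input'' is the part that required the new idea.
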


Hence, to consider all non-quasi-alternating preztel links, it will
suffice to consider $P(-l,m,n)$ with $2 \leq l \leq m \leq n$.

When we refer to Khovanov homology in this paper, we will always be
using coefficients in $\Q$. The Khovanov homology of a link takes the
form of a bigraded vector space over $\Q$. The two gradings will be
denoted $q$, the quantum grading, and $t$, the homological
grading. The bigraded vector space $Kh(P(-l,m,n))$ will be of the form
$L \oplus U$, where $L$ and $U$ are spaces to be defined in
Section~\ref{genform}. More precisely, we have
\begin{theorem}\label{intromaintheorem} Suppose $2 \leq l \leq m \leq
  n$. Then
  \[
  Kh(P(-l,m,n)) = q^{\sigma_L} t^{\tau_L} L_{l,m,n} \oplus
  q^{\sigma_U} t^{\tau_U} U_{l,m,n},
  \]
  where $L_{l,m,n}$ is a bigraded vector space specified in
  Definition~\ref{lower}, $U_{l,m,n}$ is a bigraded vector space
  specified in Definition~\ref{upper}, and the values of $\sigma_L$,
  $\tau_L$, $\sigma_U$, and $\tau_U$ are specified in
  Proposition~\ref{GradingShiftProp}. (The multiplications by
  monomials in $q$ and $t$ simply shift the bigradings by the
  indicated amount).
\end{theorem}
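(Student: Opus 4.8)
The plan is to prove the theorem by induction, using the unoriented skein long exact sequence of \cite{KPKH} to strip crossings off the twist regions of $P(-l,m,n)$ one at a time. I would order the induction by the lexicographic size of the triple $(n,m,l)$ (equivalently, by crossing number within each residue class), so that each application of the skein sequence produces strictly smaller links. The base of the induction is the collection of links at which the unraveling terminates: quasi-alternating pretzel links, whose rational Khovanov homology is determined by the Jones polynomial and signature via \cite{MO} (using Theorem~\ref{qa} to recognize them, and \cite{Landvoy} for the Jones polynomials), together with the very simple links that arise — torus links $T(2,k)$, connected sums of these, Hopf links, and unlinks — whose Khovanov homology is classical. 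One then checks directly that for all of these the answer agrees with the claimed formula $q^{\sigma_L}t^{\tau_L}L_{l,m,n}\oplus q^{\sigma_U}t^{\tau_U}U_{l,m,n}$, including the appropriate degenerations of $L_{l,m,n}$ and $U_{l,m,n}$ at the boundary of the region $2\leq l\leq m\leq n$.

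For the inductive step I would single out one crossing in one of the three twist regions — typically the $n$-strand, since $n\geq l\geq 2$ guarantees there is a crossing to remove and the result remains a pretzel link in range. The two resolutions of that crossing are: (i) the pretzel link with that twist region shortened (after an isotopy, by the amount matching parities), which is covered by the inductive hypothesis; and (ii) a genuinely simpler link in which the two strands of that twist region are short-circuited, collapsing $P(-l,m,n)$ to a $2$-strand pretzel, i.e.\ essentially a connected sum of torus links, again with known homology. Feeding these into the skein exact sequence, together with the grading shifts dictated by the numbers of positive and negative crossings in the diagrams involved, yields a long exact sequence that in favorable cases splits for bidegree reasons, and the direct sum $L_{l,m,n}\oplus U_{l,m,n}$ drops out; the monomial shifts $\sigma_L,\sigma_U,\tau_L,\tau_U$ of Proposition~\ref{GradingShiftProp} are precisely the accumulated shifts, tracked through the induction.

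The main obstacle is that the connecting homomorphism in the skein sequence need not vanish, and in the cases where the bigradings do not force it to be zero one cannot conclude directly from the sequence alone. Here I would invoke the Lee spectral sequence \cite{Lee}: its $E_\infty$ page is the Lee homology of the link, a space of known total dimension $2^{c}$ (where $c$ is the number of components) supported in homological gradings computable from the linking numbers. The skein sequence then determines the Khovanov homology up to exactly one potential cancelling pair of classes, occurring precisely when the connecting map might be nonzero; the requirement that Khovanov homology carry a spectral sequence converging to Lee homology of the correct rank, together with knowledge of where the surviving Lee classes must sit, pins down whether that cancellation happens. The genuinely laborious part is the bookkeeping: there are many cases according to the parities of $l$, $m$, $n$ and according to whether one of the parameters has reached a boundary value where the link becomes quasi-alternating, acquires a trivial strand, or degenerates to a connected sum, and in each case one must verify that the chosen crossing removal keeps the induction within scope and that the shifted copies of $L$ and $U$ reassemble correctly. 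Carrying out this case analysis carefully, rather than any single clever step, is the heart of the argument.
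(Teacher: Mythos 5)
Your proposal follows the same overall approach as the paper — induct via the unoriented skein sequence, anchor the base in quasi-alternating pretzels and torus links, and use Lee-theoretic constraints to resolve the connecting map — but it understates two difficulties that are the real substance of the argument. The first is the order of unraveling. A straight lexicographic induction on $(n,m,l)$ that always peels the longest strand will get stuck: the paper observes in Section~\ref{strategy} that the skein sequence has genuine ambiguities at the transitions from $P(-l,l-1,n)$ to $P(-l,l,n)$ and, when $l$ or $n$ is even, from $P(-l,l,n)$ to $P(-l,l+1,n)$. The paper's fix is an interleaved induction: first establish $P(-l,l,n)$ and $P(-l,l+1,n)$ by unraveling the rightmost ($n$) strand, then induct on $m$ by unraveling the middle strand, with an additional detour (proving $P(-l,l+1,n)$ with odd $n$ before using it to get even $n$) when $l$ is odd. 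That ordering is not something a single lexicographic scheme delivers, and a blind attempt to reduce the $n$-strand throughout would fail at exactly the transitions the paper singles out.

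The second gap is the claim that the Lee spectral sequence data — total rank $2^{c}$ and the $t$-gradings of surviving classes — always pins down the cancellation. In most inductive steps it does, via the ``well-structured'' (knight's move plus exceptional pair) framework set up in Lemmas~\ref{noeps}, \ref{lstandard}, and \ref{hstandard}. But in the base case Lemma~\ref{oddhalfway} one ambiguity remains that Lee data alone cannot break: a generator $e_4$ could either cancel or survive to form an exceptional pair, and both outcomes yield a well-structured space with the right number of exceptional pairs in the right $t$-grading. The paper eliminates the bad option by observing that survival would force the Rasmussen $s$-invariant of $P(-l,l,n)$ to be $-2$, contradicting sliceness of $P(-l,l,n)$ and Rasmussen's theorem that slice knots have $s=0$. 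Your outline does not anticipate needing the $s$-invariant; without it the induction never gets started. So the skeleton is right, but the two non-routine ideas — the carefully interleaved unraveling order and the $s$-invariant input at the base — are precisely what your proposal omits.
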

The $L$ summand will depend mostly on $l$ and not on $m$ or $n$, and
the $U$ summand will depend mostly on $m - l$ and $n - m$ (``mostly''
means there are still some cases depending on parity and on whether $m
= l$).

We will briefly sketch here how $L$ and $U$ are defined; the details
are in Section~\ref{genform}. First, the spaces involved in the
formula for $Kh(P(-l,m,n))$ will all be contained in three adjacent
$\delta$-gradings. In fact, all pretzel knots with arbitrarily many
strands have the same property, as Champanerkar and Kofman point out
in \cite{CK}: pretzel knots with arbitrarily many strands have Turaev
genus $1$ (there is a nice picture-proof in \cite{CK}), and unreduced
homological width is bounded by the Turaev genus plus $2$ (see
\cite{DFKLS}, \cite{CKS}).

Plot $Kh(P(-l,m,n)) = L \oplus U$ on a two-dimensional grid with the
homological grading $t$ on the $x$-axis and the quantum grading $q$ on
the $y$-axis (see Figure~\ref{Kh357}). The summands $L$ and $U$ will
mostly occupy different regions, or regions which only slightly
overlap, and $L$ is to the left of $U$ and below it. (Thus the $q$-
and $t$-gradings of generators of $L$ are generally lower than those
of $U$, explaining the use of the letters $L$ for ``lower'' and $U$
for ``upper.'') In the generic case $m \neq l$, the summands $L$ and
$U$ share no columns of the grid when $l$ is odd and two columns when
$l$ is even.

Each summand $L$ and $U$ will be contained in only two of the three
possible $\delta$-gradings. $L$ will be contained in the higher two
$\delta$-gradings, and $U$ will be contained in the lower
two. Furthermore, $L$ and $U$ will each be made up of knight's moves
and exceptional pairs (see Figure~\ref{knightsep}). Hence, we only
need to specify the values of $L$ or $U$ in one of its two
$\delta$-gradings, as long as we know where its exceptional pairs
are. The required data are a sequence of integers, representing
dimensions of the summand $L$ or $U$ in its bottom $\delta$-grading,
together with the bigrading of one generator of the summand (to fix
the overall gradings) and the locations of the exceptional pairs. In
Section~\ref{genform}, we will define $L$ and $U$ by specifying these
data.

\subsection{Acknowledgements} The author would like to thank
Zolt{\'a}n Szab{\'o} for helpful conversations during the writing of
this paper.

\section{The general formula}\label{genform}

\subsection{Bigraded vector spaces.}
Let $V$ be a bigraded vector space. We will write $V = \oplus_{i,j}
V_{i,j}$, where $V_{i,j}$ denotes the subspace in $t$-grading $i$ and
$q$-grading $j$. The space $V$ can then be specified by its
Poincar{\'e} polynomial, a Laurent polynomial $P_V$ in $q$ and $t$
with positive integer coefficients such that the coefficient of $q^j
t^i$ in $P_V$ is the dimension of $V_{i,j}$ (note the ordering of the
indices). We will henceforth identify $V$ with $P_V$.

Monomial multiplication on $P_V$ corresponds to grading-shift on $V$:
if $a$ and $b$ are integers, then $q^a t^b V$ is defined as the
bigraded vector space with Poincar{\'e} polynomial $q^a t^b
P_V$. Equivalently,
\[
(q^a t^b V)_{i,j} = V_{i-b,j-a}.
\]

Visually, one depicts a bigraded vector space by drawing a grid (see
Figure~\ref{Kh357}). Our convention will be to put the $t$-grading on
the horizontal axis and the $q$-grading on the vertical axis
(labelling only odd values or only even values of $q$, since the
vector spaces in question will always have only odd or only even
$q$-gradings).

The $\delta$-grading on a bigraded vector space $V$ is defined by
$V_{\delta} = \oplus_{j-2i=\delta} V_{i,j}$ (see \cite{KPKH}). It
corresponds to summing $V$ along diagonals.

\subsection{An example, and some definitions.}

\begin{figure}
  \begin{center}
    \input{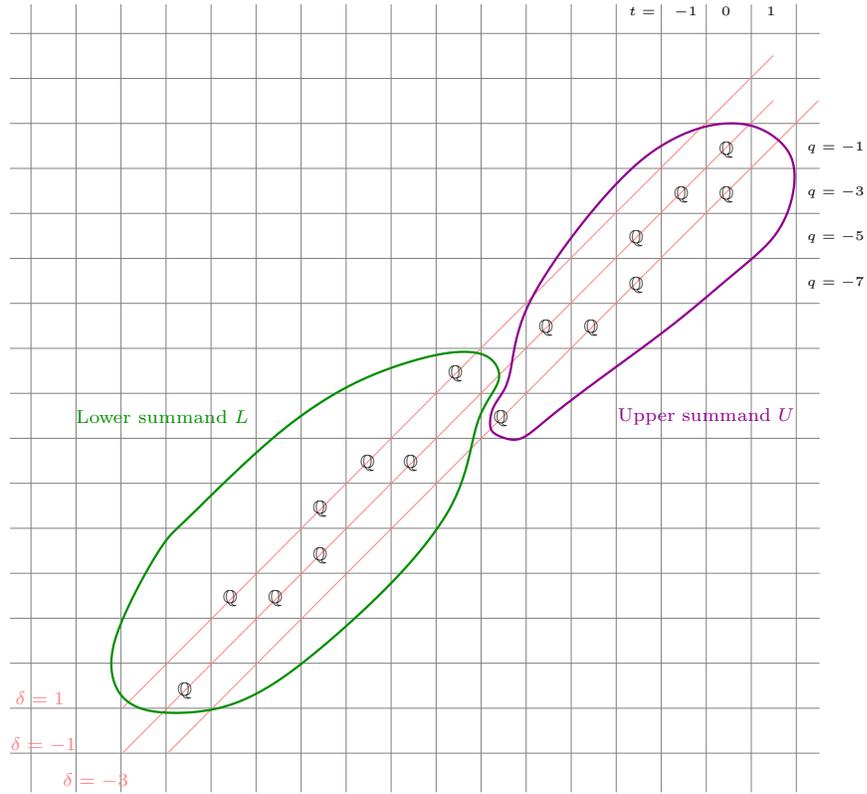}
  \end{center}
  \caption{\label{Kh357} Rational Khovanov homology of the $(-3,5,7)$
    pretzel knot.}
\end{figure}

Figure~\ref{Kh357} depicts the Khovanov homology of $P(-3,5,7)$. It
shows an important qualitative feature of the general formulas we will
give: as discussed in the introduction, $Kh(P(-l,m,n))$ is made up of
a ``lower'' summand and an ``upper'' summand with respect to the $q$-
or $t$-grading. Out of the three allowable $\delta$-gradings, the
lower summand is contained in the highest two $\delta$-gradings, and
the upper summand is contained in the lowest two
$\delta$-gradings. For most pretzel links, the upper and lower
summands overlap in at most two $t$-gradings (although the $(-l,l,n)$
pretzel links have a larger overlap when $l$ is even).

We will now introduce a few definitions allowing us to efficiently
package our formulas, following the discussion in the
introduction. Each upper and lower summand is contained in two
adjacent $\delta$-gradings. Now, when a link has thin Khovanov
homology (i.e. homology contained in two $\delta$-gradings), the Lee
spectral sequence from \cite{Lee} tells us that the homology breaks into
``knight's moves'' and ``exceptional pairs'' (see
Figure~\ref{knightsep}). In our case, each of the summands $L$ and $U$
will individually break into knight's moves and exceptional
pairs. Thus we can specify the entire summand (up to overall grading)
by, first, specifiying what it is in the bottom $\delta$-grading, and
second, determining which generators in the bottom $\delta$-grading
are parts of exceptional pairs rather than knight's moves.

\begin{figure}
  \begin{center}
    \input{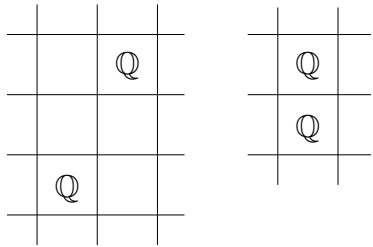}
  \end{center}
  \caption{\label{knightsep} A knight's move and an exceptional pair.}
\end{figure}

Specifying the upper or lower summand in its bottom $\delta$-grading
amounts (up to shifts) to specifying a list of dimensions along the
diagonal, or equivalently a sequence of integers. Since we will be
shifting the grading later, we may as well work at first with spaces
where the bottom $\delta$-grading is $\delta = 0$. These observations
motivate the following two definitions.

\begin{definition}\label{SequenceDef}
  If $a = (\ldots, a_{k-1}, a_k,a_{k+1},\ldots): \Z \to \N$ is a
  sequence of positive integers with only finitely many nonzero
  entries, define the bigraded vector space $\tilde{V}[a]$ associated
  to $a$ by
  \[
  \tilde{V}[a] = \sum_{i=-\infty}^{\infty} a_{i+1} q^{4i} t^{2i}.
  \]

  In other words, $\tilde{V}[a]$ is contained in $\delta$-grading
  zero, with ranks $(\ldots, a_{k-1}, a_k,a_{k+1},\ldots)$ along the
  diagonal. The term with rank $a_1$ has been placed at the
  origin. (We chose $a_1$ rather than $a_0$ to be the origin since the
  sequences used in most of our definitions will be supported on
  $\{1,2,\ldots,k\}$ for some $k$, and we would like the lowest
  nontrivial generator to have bigrading $(0,0)$.)
\end{definition}

\begin{figure}
  \begin{center}
    \input{sequences.pstex_t}
  \end{center}
  \caption{\label{sequences} Let $a$ be supported on $[1,4]$, with
    values $(1,2,3,4)$. On the left is $\tilde{V}[a]$, and on the
    right is $V[a,E]$, where there is one exceptional pair on the
    third index.}
\end{figure}

The summands $L$ and $U$ will be grading-shifts of spaces $L_{l,m,n}$
and $U_{l,m,n}$; the spaces $L_{l,m,n}$ and $U_{l,m,n}$ will be
contained in $\delta \in \{0,2\}$ (recall that these $\delta$-gradings
are adjacent, since differences in $q$-gradings or $\delta$-gradings
are always even for a given link). To assemble these spaces, we need
to know where their exceptional pairs are. Suppose we would like to
specify $V$, a bigraded vector space made up of knight's moves and
exceptional pairs and contained in $\delta \in \{0,2\}$. Besides
needing a sequence as in Definition~\ref{SequenceDef}, we also need
some exceptional pair data. We will package the exceptional pair data
of $V$ in the form of a function $E: \Z \rightarrow \N$, where $E(i)$
is the number of exceptional pairs $V$ has in $t$-grading $i$.

\begin{definition}
  Let $a$ be a finite sequence as in Definition~\ref{SequenceDef} and
  let $E: \Z \rightarrow \N$ be any function such that $E(i) \leq
  a_i$. Define the sequence $a'$ by $a'_i = a_i-E(i).$ Then
  \[
  V[a,E] := (1+q^4 t) \tilde{V}[a'] \oplus_{i=1}^{k} E(i) (1+q^2)
  q^{4(i-1)} t^{2(i-1)}
  \]

  In other words, we have singled out $E(i)$ generators in each index
  $i$ and turned them into the bottom halves of exceptional
  pairs. Each other generator has been turned into the bottom half of
  a knight's move pair.

  The definitions of the spaces $L_{l,m,n}$ and $U_{l,m,n}$ will
  involve sequences supported on $[1,k] := \{1,2,\ldots,k\}$ for some
  $k$. In these cases, we will specify $E: [1,k] \to \N$ by saying
  (for instance) ``the exceptional pair is on the first index'' or
  ``there are two exceptional pairs, one on the first index and one on
  the second-to-last index.'' If we wish to indicate that there are no
  exceptional pairs, we will simply omit $E$ from the notation and
  just write
  \[
  V[a] := (1 + q^4 t) \tilde{V}[a].
  \]
\end{definition}

\begin{definition}
  The following basic sequences will arise in our description of the
  upper and lower summands. All will be supported on $[1,k]$ for some
  $k$, and we will specify their values using $k$-tuples of positive
  integers.

  \begin{itemize}
  \item The sequence $a_k = (a_{1k}, \ldots, a_{kk})$ of length $k$
    has the following pattern:

    $(1,0,2,1,3,2,\ldots)$.
  \item The sequence $b_k = (b_{1k}, \ldots, b_{kk})$ of length $k$
    has the following pattern:

    $(1,0,2,0,3,1,4,2,5,3,\ldots)$.
  \item The sequence $c_k = (c_{1k}, \ldots, c_{kk})$ of length $k$
    has the following pattern:

    $(1,1,2,2,3,3,\ldots)$.
  \end{itemize}

  The sequences above are assumed to be truncated after the $k^{th}$
  entry, even if $k$ is odd. If $k \leq 0$, the sequences are empty.
\end{definition}

\begin{definition}
  We will also use some operations on sequences supported on $[1,k]$
  for various $k$. As before, such sequences will be specified with
  tuples of positive integers.
 
  \begin{itemize}
  \item If $a = (a_1,\ldots, a_k)$ and $b = (b_1,\ldots, b_l)$, then
    $a \cdot b$ denotes the concatenation $(a_1, \ldots, a_k, b_1,
    \ldots, b_l)$. Note that $a \cdot b$ is supported on $[1,k+l]$.

  \item If $a$ is as above, $a^m$ denotes $a \cdot a \cdot \ldots
    \cdot a$, $m$ times. For example, $(1)^m$ denotes the sequence
    $(1,\ldots,1)$ of length $m$.

  \item If $a$ is as above, $\overline{a}$ denotes $a$ in reverse,
    i.e. $(a_k, \ldots, a_1)$.

  \item If $a$ and $b$ are as above and $k=l$, then we can add $a$ and
    $b$ componentwise to get $a+b$. This makes sense even if $b$ has
    negative coefficients, as long as the result has positive
    coefficients.

  \end{itemize}
\end{definition}

\subsection{Formula for the lower summand.} After these preliminaries,
we can now define the space $L_{l,m,n}$ which will be the ``lower''
summand of $Kh(P(-l,m,n))$ after a grading shift. For $m \neq l$,
there are two cases of the definition, depending only on the parity of
$l$. There are four additional cases for $m = l$. The formulas are
given below in the notation of the above subsection.

\begin{figure}
  \begin{center}
    \input{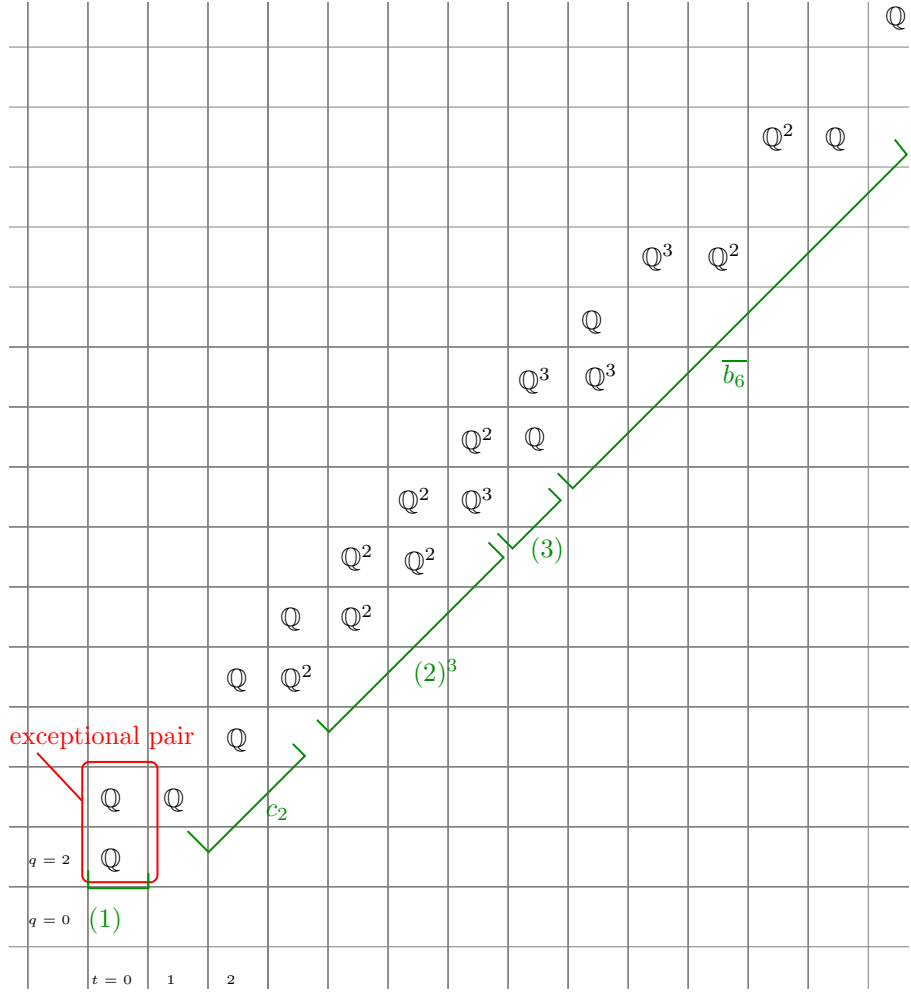}
  \end{center}
  \caption{\label{L6example} The space $L_{6,m,n}$ for any $m > 6$,
    based on the formula in case \ref{evenmneql} of
    Definition~\ref{lower}.}
\end{figure}

\begin{definition}\label{lower} The lower summand $L_{l,m,n}$ is
  defined below in various cases. See Figure~\ref{L6example} for an
  example of case \ref{evenmneql} below.
  \begin{enumerate}
  \item \label{oddmneql} If $m \neq l$ and $l$ is odd, then
    \[
    L_{l,m,n} := V \bigg[ a_{l-1} \cdot \bigg( \frac{l-1}{2} \bigg)^2
    \cdot \overline{a_{l-1}} \bigg].
    \]

  \item \label{evenmneql} If $m \neq l$ and $l$ is even, then
    \[
    L_{l,m,n} := V \bigg[ (1) \cdot c_{l-4} \cdot \bigg( \frac{l-2}{2}
    \bigg)^3 \cdot \bigg( \frac{l}{2} \bigg) \cdot \overline{b_l}, E
    \bigg],
    \]
    where the exceptional pair is on the first index.

  \item \label{oddmeql} If $m = l$ and $l$ is odd, then
    \[
    L_{l,l,n} := V \bigg[ a_{l-1} \cdot \bigg (\frac{l-1}{2} \bigg)^2
    \cdot \overline{a_{l-1}} \cdot (0)^{n-l} \cdot (1),E \bigg],
    \]
    where the exceptional pair is on the final index.

  \item \label{evenmeqlodd} If $m = l$, $l$ is even, and $n$ is odd,
    then
    \[
    L_{l,l,n} := V \bigg[(1) \cdot c_{l-4} \cdot \bigg( \frac{l-2}{2}
    \bigg)^3 \cdot \bigg( \frac{l}{2} \bigg) \cdot \overline{b_l}
    \cdot (0,1)^{(n-l-1)/2}, E \bigg],
    \]
    where the exceptional pair is on the first index.

  \item \label{evenmeqlevenneq} If $m = l$, $l$ is even, $n$ is even,
    and $n \neq l$, then
    \[
    L_{l,l,n} := V \bigg[(1) \cdot c_{l-4} \cdot \bigg (\frac{l-2}{2}
    \bigg)^3 \cdot \bigg(\frac{l}{2} \bigg) \cdot \overline{b_l} \cdot
    (0,1)^{(n-l)/2}, E \bigg],
    \]
    with exceptional pairs on the first and last indices.

  \item \label{llleven}If $m = l$, $l$ is even, and $n = l$, then
    \[
    L_{l,l,l} := V \bigg[(1) \cdot c_{l-4} \cdot \bigg (\frac{l-2}{2}
    \bigg)^3 \cdot \bigg(\frac{l}{2} \bigg) \cdot (\overline{b_l} +
    (\ldots,0,0,1)), E \bigg],
    \]
    with one exceptional pair on the first index and two on the last
    index. The addition is done such that the last index of
    $\overline{b_l}$ gets the extra $1$.

  \end{enumerate}
\end{definition}

Note that to obtain the formulas when $m = l$, you just add some extra
generators, in higher $q$- and $t$-gradings, to the formulas for $m
\neq l$.

\subsection{Formula for the upper summand.} The rest of the Khovanov
homology of $P(-l,m,n)$ comes from the ``upper summand''
$U_{l,m,n}$. It depends only on $g := m-l$ and $h := n-m$ as well as
the parities of $l$, $m$, and $n$. Each of the eight
choices for parities gives rise to a different formula, so the below
definition has eight different cases.

\begin{definition}\label{upper} Suppose $2 \leq l \leq m \leq n$ are
  integers; let $g = m-l$ and $h = n-m$.

  \begin{enumerate}
  \item\label{upperooo} If $l$, $m$, and $n$ are odd,
    \[
    U_{l,m,n} := V \bigg[ a_{g} \cdot \bigg (\frac{g}{2} \bigg)^{h}
    \cdot \overline{a_{g}}, E \bigg],
    \]
    where the one exceptional pair is on the final index.

  \item\label{upperooe} If $l$ and $m$ are odd but $n$ is even,
    \[
    U_{l,m,n} := V \bigg[ a_{g} \cdot \bigg(\frac{g}{2} \bigg)^{h-1}
    \cdot \overline{c_{g}},E \bigg],
    \]
    where the exceptional pair is on the first index of
    $\overline{c_{g}}$.

  \item\label{upperoeo} If $l$ is odd, $m$ is even, and $n$ is odd,
    \[
    U_{l,m,n} := V \bigg[ a_{g-1} \cdot \bigg(\frac{g+1}{2},
    \frac{g-1}{2} \bigg)^{(h+1)/2} \cdot \overline{c_{g-1}},E \bigg],
    \]
    where the exceptional pair is on the first instance of
    $\frac{g+1}{2}$.

  \item\label{upperoee} If $l$ is odd but $m$ and $n$ are both even,
    \[
    U_{l,m,n} := V \bigg[ a_{g-1} \cdot \bigg (\frac{g+1}{2},
    \frac{g-1}{2} \bigg)^{h/2} \cdot \bigg( \frac{g+1}{2} \bigg) \cdot
    \overline{c_{g-1}},E \bigg],
    \]
    where the exceptional pairs are on the first and last instances of
    $\frac{g+1}{2}$. (If $h = 0$, this means there are two exceptional
    pairs in the same $t$-grading.)

  \item\label{uppereoo} If $l$ is even but $m$ and $n$ are both odd,
    \[
    U_{l,m,n} := V \bigg[ b_{g+1} \cdot \bigg( \frac{g+1}{2},
    \frac{g-1}{2} \bigg)^{h/2} \cdot \overline{c_{g-1}} \bigg].
    \]

  \item\label{uppereoe} If $l$ is even, $m$ is odd, and $n$ is even,
    \[
    U_{l,m,n} := V \bigg[ b_{g+1} \cdot \bigg(\frac{g+1}{2},
    \frac{g-1}{2} \bigg)^{(h-1)/2} \cdot \overline{c_g}, E \bigg],
    \]
    where the exceptional pair is on the first index of
    $\overline{c_g}$.

  \item\label{uppereeo} If $l$ and $m$ are even but $n$ is odd,
    \[
    U_{l,m,n} := V \bigg[ b_{g} \cdot \bigg(\frac{g+2}{2},
    \frac{g-2}{2} \bigg)^{(h+1)/2} \cdot \overline{c_{g-1}}, E \bigg],
    \]
    where the exceptional pair is on the first instance of
    $\frac{g+2}{2}$. If $l = m$, so $g = 0$, the $-1$ that arises here
    should be interpreted as zero.

  \item\label{uppereee} If $l$, $m$ and $n$ are all even,
    \[
    U_{l,m,n} := V \bigg[ b_g \cdot \bigg(\frac{g+2}{2}, \frac{g-2}{2}
    \bigg)^{h/2} \cdot \bigg( \frac{g+2}{2} \bigg) \cdot
    \overline{a_g}, E \bigg],
    \]
    where the exceptional pairs are on the first and last instances of
    $\frac{g+2}{2}$ and on the very last index. Again, if $g = 0$, the
    $-1$ in this formula should be interpreted as zero.
  \end{enumerate}
\end{definition}

\subsection{Orientations, grading shifts, and the general formula.}
Finally, we will put everything together with the appropriate grading
shifts to produce the general formula for the Khovanov homology of
pretzel links.

\begin{figure}
  \begin{center}
    \epsfbox{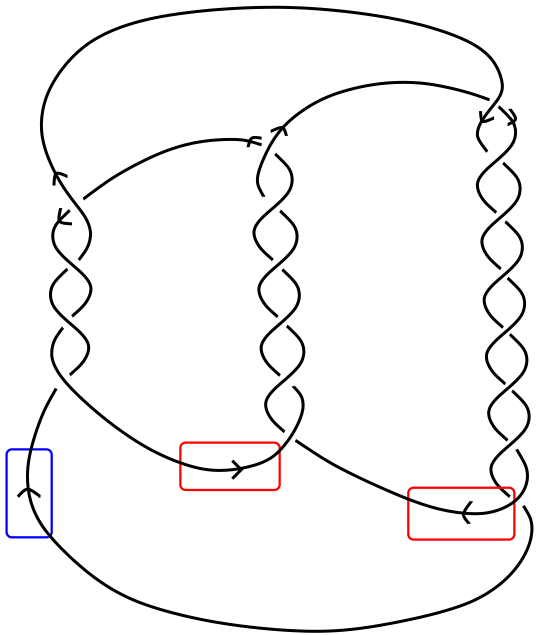}
  \end{center}
  \caption{\label{Pn468} The link $P(-4,6,8)$, or $P(-4,6,8)_{RL}$,
    with the orientations we will use for even $l$. Note that all
    crossings are positive; this is a peculiarity of the $RL$
    orientation pattern.}
\end{figure}

Before discussing grading shifts, though, we must decide on
orientations, since changing the orientation of one component of a
multi-component link changes the Khovanov homology by an overall
grading shift.

The colored boxes in Figure~\ref{Pn468} indicate the data needed to
specify the orientation of $P(-l,m,n)$ in each case; for $3$-component
links like $P(-4,6,8)$, we can pick a direction in each box. For the
pretzel links which are knots or $2$-component links, not all choices
of this data are allowable.

We will always orient the blue box upwards as shown in
Figure~\ref{Pn468}. For knots, this choice fixes the entire
orientation, according to Proposition~\ref{knotor} below. For links,
we need to pin down the red boxes too. Each can point either right or
left. We will indicate the way they point with subscripts. For
example, when $P(-4,6,8)$ is oriented as in Figure~\ref{Pn468}, we
will write it as $P(-4,6,8)_{RL}$.

When $P(-l,m,n)$ is a knot, the directions of the red boxes are fixed
by our choice for the blue box:

\begin{proposition}\label{knotor} Using the above notation:
  \begin{itemize}
  \item If $l$, $m$, and $n$ are odd, then $P(-l,m,n)$ is oriented
    $RR$.
  \item If $l$ and $m$ are odd but $n$ is even, then $P(-l,m,n)$ is
    oriented $LR$.
  \item If $l$ is odd, $m$ is even, and $n$ is odd, then $P(-l,m,n)$
    is oriented $LL$.
  \item If $l$ is even but $m$ and $n$ are odd, then $P(-l,m,n)$ is
    oriented $RL$.
  \end{itemize}
\end{proposition}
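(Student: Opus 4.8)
The plan is to prove this by directly chasing the orientation around the standard diagram of $P(-l,m,n)$. First I would fix notation for the three twist regions $T_1, T_2, T_3$ of the diagram (carrying $-l$, $m$, and $n$ crossings respectively), identify which of them is the blue box and which two are the red boxes as drawn in Figure~\ref{Pn468}, and pin down exactly what ``oriented upwards'', ``pointing right'', and ``pointing left'' mean as conditions on the co-orientations of the two arcs running through the box in question. The one topological input needed is the elementary behaviour of a twist region under the knot's connectivity: a region with an odd number of crossings swaps its two strands, so that its top-left end is joined through the region to its bottom-right end (and top-right to bottom-left), whereas a region with an even number of crossings joins top-left to bottom-left and top-right to bottom-right. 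Together with the fixed pattern of closure arcs along the top and the bottom of the pretzel diagram (including the arc closing up the two outermost ends), this completely determines, in each parity case, the way the single component of the knot threads through the three twist regions.

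With that set up, I would simply perform the trace in each of the four cases. Begin on the strand through the blue box, oriented upward; follow it out the top, across the closure arcs, and into the next region; apply the odd/even rule to see which end it exits; and iterate until the strand closes up, recording as we go the orientation induced on each arc passing through a red box, i.e.\ whether that red box ends up pointing right or left. The four cases of the proposition correspond exactly to which twist region (if any) carries an even number of crossings --- none in case~(1), the largest ($T_3$) in case~(2), the middle ($T_2$) in case~(3), the $(-l)$-region ($T_1$) in case~(4) --- so in each case the trace is governed by a single non-permuting region and the computation is short. I would include a small labelled figure of each resulting oriented diagram so the reader can verify the claimed subscript ($RR$, $LR$, $LL$, $RL$) by inspection.

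I expect the only real work to be bookkeeping rather than anything subtle: one must stay consistent about the orientation of the outside closure arc and must not conflate a box's pointing direction (a statement about orientation) with the signs of the crossings inside it. Two sanity checks are worth recording. First, for a knot the red boxes are genuinely forced once the blue box is fixed: since the knot is connected, the trace from the blue box reaches every arc and assigns it an orientation with no choices made, so the output is well defined. Second, the answers are compatible with the symmetry that reordering $l, m, n$ turns $P(-l,m,n)$ on its head: sliding the even twist region from the last to the middle to the first position is precisely what interchanges cases~(2), (3), and~(4), with the corresponding relabelling of the red boxes, so there is no hidden inconsistency.
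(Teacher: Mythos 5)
Your approach is correct, and it is essentially the only reasonable proof: the paper itself states Proposition~\ref{knotor} without any proof, treating it as a straightforward diagram-chasing verification, which is exactly what you propose. Your key observation --- that a twist region with an odd crossing count permutes the two strands passing through it while an even one does not, so that the connectivity (and hence the forced orientations) depends only on the parities of $l$, $m$, $n$ --- is precisely the topological input needed, and your identification of which twist region is even in each of the four cases is accurate. The two sanity checks you record (well-definedness of the trace for a connected diagram, and compatibility with reordering the strands) are genuine and worth keeping. The only thing separating your proposal from a finished proof is the actual execution of the four traces with the figures you promise; since the answers are pre-specified by the proposition and the methodology leaves no choices, this is pure bookkeeping, but you should carry it out explicitly (or at minimum draw one of the cases in detail and note that the others are identical in form) before calling the proof complete.
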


When dealing with $P(-l,m,n)$ for even $l$, we will always orient the
link in the $RL$ manner as in Figure~\ref{Pn468} (and omit the
subscripts $RL$), in agreement with the orientation given by
Proposition~\ref{knotor} when $m$ and $n$ are odd. This choice fixes
orientations on all the links under consideration, except for
$P(-l,m,n)$ when $l$ is odd and $m$ and $n$ are both even. In this
case, our inductive proofs will force us to consider both
$P(-l,m,n)_{LL}$ and $P(-l,m,n)_{LR}$, and here we will be careful to
indicate which one we mean.

\begin{proposition}\label{GradingShiftProp}
  The values of the grading shift variables $\sigma_L$, $\tau_L$,
  $\sigma_U$, and $\tau_U$ in Theorem~\ref{intromaintheorem} depend
  only on the orientation pattern ($RR$, $LL$, $RL$, or $LR$) and are
  listed in Table~\ref{GradingShift}.
\end{proposition}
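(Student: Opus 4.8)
The plan is to treat Proposition~\ref{GradingShiftProp} as the absolute-grading bookkeeping that accompanies Theorem~\ref{intromaintheorem}: the inductive skein-exact-sequence argument in the body of the paper produces the isomorphism type of $Kh(P(-l,m,n))$, and what remains is to locate the normalized summands $L_{l,m,n}$ and $U_{l,m,n}$ in the $(t,q)$-plane. I would first cut down the number of cases. The knots among the $P(-l,m,n)$ have their orientation forced by Proposition~\ref{knotor}, and for the links two orientations that differ by reversing a sublink have Khovanov homologies related by the standard shift $q^{-6\lambda}t^{2\lambda}$, where $\lambda$ is the total linking number between the reversed sublink and its complement (with respect to the un-reversed orientation). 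For the standard pretzel diagram these linking numbers are immediate — an odd tangle contributes $\pm1$ and an even tangle $0$ to the relevant $\lambda$ — so it suffices to fix $\sigma_L,\tau_L,\sigma_U,\tau_U$ for one orientation in each of the eight parity cases and then transport along this formula; checking that the eight computations agree on the four patterns $RR$, $LL$, $RL$, $LR$ is exactly what forces the final constants to depend only on the pattern.

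For a fixed $(l,m,n)$ I would then pin the gradings by comparison with classical data, which in fact overdetermine the four constants. The extreme $q$-gradings of $Kh(P(-l,m,n))$ can be read directly off the all-$0$ and all-$1$ resolutions of the standard diagram; since $L$ occupies the lower-left and $U$ the upper-right of the picture, this locates one corner of each summand and hence fixes $\sigma_L$ and $\sigma_U$. The homological shifts are then forced by the $\delta$-grading: with $\delta = q - 2t$, a summand's $q$-placement together with its $\delta$-placement determines its $t$-placement, and $L$ (resp.\ $U$) lies in the top (resp.\ bottom) two of the three admissible $\delta$-gradings, whose location is fixed by the signature of $P(-l,m,n)$ — computable from the Goeritz form of the pretzel diagram — or equivalently by the Lee homology \cite{Lee} already invoked in the paper. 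As a cross-check, matching graded Euler characteristics against the Jones polynomial of $P(-l,m,n)$ from \cite{Landvoy} reproduces the same constants.

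Equivalently, the proposition falls out of the proof of Theorem~\ref{intromaintheorem} itself. Each application of the unoriented skein exact sequence carries explicit grading shifts governed by the number of negative crossings in the chosen diagrams, and the normalizations built into Definitions~\ref{SequenceDef}, \ref{lower}, and \ref{upper} are arranged precisely so that, once those shifts are absorbed, the net displacement of each summand does not change as $l$, $m$, $n$ vary within a parity class. One therefore verifies Table~\ref{GradingShift} on the base cases of the induction — small pretzel links, whose Khovanov homology is known explicitly because they are alternating or quasi-alternating and hence thin — and then checks that the inductive step, which adds or removes a controlled number of crossings, changes $L_{l,m,n}$ and $U_{l,m,n}$ by exactly the amount needed to hold $\sigma_L$, $\tau_L$, $\sigma_U$, $\tau_U$ fixed.

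The main obstacle is entirely bookkeeping: keeping the signs and sizes of all the crossing counts — hence of both the skein-sequence shifts and the orientation-reversal shifts — mutually consistent across the eight parity cases and four orientation patterns, and in particular reconciling the two orientations $LL$ and $LR$ that the induction must carry when $l$ is odd and $m$, $n$ are both even. Once the conventions are nailed down each case is a short finite computation whose output is the corresponding row of Table~\ref{GradingShift}.
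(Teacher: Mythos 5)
Your third paragraph is essentially the paper's actual argument: Proposition~\ref{GradingShiftProp} has no standalone proof, but is established as a byproduct of the inductive proof of Theorem~\ref{maintheorem}, where each application of Theorem~\ref{skein} carries an explicit $q^{3\epsilon+1}t^{\epsilon}$ (or $q^{3\epsilon+2}t^{\epsilon+1}$) shift and the author simply verifies on every inductive step that the base points of $L$ and $U$ land on the appropriate row of Table~\ref{GradingShift} (see, e.g., the sentence ``Note that these values match up with the $LL$ row of Table~\ref{GradingShift}'' in the proof of Theorem~\ref{oddloddnthm}). Section~\ref{sec:an-altern-appr} gives a heuristic reconstruction of the table, but it explicitly takes $\delta_{max}$ as given rather than rederiving it, so it is not a self-contained alternative proof. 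So the core of your proposal is on target.

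The two ``alternative'' paragraphs, however, have genuine problems. First, the orientation-reversal shift has the wrong sign on the homological exponent: reversing a sublink $K$ with linking number $\lambda$ against its complement shifts Khovanov homology by $q^{-6\lambda}t^{-2\lambda}$, not $q^{-6\lambda}t^{+2\lambda}$ (both exponents change sign together; see the paper's own use of $q^{-6(n-m)/2}t^{-2(n-m)/2}$ in the proof of Theorem~\ref{oddlevennthm}). Second, the claim that ``an odd tangle contributes $\pm1$ and an even tangle $0$ to the relevant $\lambda$'' is false: the relevant linking numbers for pretzel links grow linearly in the twist parameters rather than depending only on parity, as the paper's value $\lambda = (n-m)/2$ already shows; relying on this claim would make every grading transported between orientation patterns incorrect. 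Third, the plan to read extreme $q$-gradings off the all-$0$ and all-$1$ resolutions presupposes an adequacy-type statement for the standard pretzel diagram that is not automatic for non-alternating diagrams and would need independent justification; similarly, pinning the $\delta$-placement by the signature (via the Goeritz form) quietly identifies the signature with the Lee $s$-invariant, which is only guaranteed for quasi-alternating links — precisely the case this paper is not about. These gaps mean that, as written, paragraphs one and two do not constitute a correct derivation of Table~\ref{GradingShift}; the sound route is the skein-sequence bookkeeping from paragraph three, which is what the paper does.
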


\begin{table}
  \begin{center}
    \begin{tabular}{|c||c|c|c|c|c|c|}
      \hline
      & $\sigma_L$ & $\tau_L$ & $\sigma_U$ & $\tau_U$ & $\delta_{max}$ & $n_-$ \\
      \hline\hline
      $RR$ & $-2m-2n-1$ & $-m-n$ & $4l-2m-2n-1$ & $2l-m-n+1$ & $1$ & $m+n$ \\
      \hline
      $LL$ & $-3l-2m+n-1$ & $-l-m$ & $l-2m+n-1$ & $l-m+1$ & $n-l+1$ & $l+m$ \\
      \hline
      $LR$ & $-3l+m-2n-1$ & $-l-n$ & $l+m-2n-1$ & $l-n+1$ & $m-l+1$ & $l+n$ \\
      \hline
      $RL$ & $n+m-1$ & $0$ & $4l+m+n-3$ & $2l$ & $n+m+1$ & $0$ \\
      \hline
    \end{tabular}
  \end{center}
  \caption{Values of the grading shift variables for the four different
    orientation possibilities. For convenience, the rightmost two columns
    also list the highest of the three $\delta$-gradings with nonzero homology
    and the number $n_-$ of negative crossings.}
  \label{GradingShift}
\end{table}

At this point we may restate our main theorem, having defined all of
its components:

\begin{theorem}\label{maintheorem}
  Suppose $2 \leq l \leq m \leq n$. Then
  \[
  Kh(P(-l,m,n)) = q^{\sigma_L} t^{\tau_L} L_{l,m,n} \oplus
  q^{\sigma_U} t^{\tau_U} U_{l,m,n},
  \]
  where $L_{l,m,n}$ comes from Definition~\ref{lower}, $U_{l,m,n}$
  comes from Definition~\ref{upper}, and the values of $\sigma_L$,
  $\tau_L$, $\sigma_U$, and $\tau_U$ come from
  Proposition~\ref{GradingShiftProp}.
\end{theorem}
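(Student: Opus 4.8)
The plan is to prove Theorem~\ref{maintheorem} by induction, using the unoriented skein exact sequence to ``unravel'' the pretzel link one crossing at a time, reducing the number of crossings in one of the three strands. Concretely, if we resolve a single crossing in the strand labeled $-l$ (for $l \geq 2$), the oriented and unoriented resolutions of $P(-l,m,n)$ are $P(-(l-1),m,n)$ (or $P(-(l-2),m,n)$) and a link with one strand replaced by a single clasp or a connect-sum-type piece; resolving a crossing in the $m$- or $n$-strand behaves similarly. This gives a long exact sequence relating $Kh(P(-l,m,n))$ to the Khovanov homology of two simpler links, at least one of which is again a pretzel link (possibly quasi-alternating, handled by Theorem~\ref{qa} together with the known thin formula from \cite{MO}, or alternating when all parameters are positive) and the other of which can be identified by further elementary moves (e.g. as a connected sum or a torus link $T(2,k)$, whose Khovanov homology is standard). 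The base cases are $l = 2$ and the quasi-alternating range $l > \min\{m,n\}$, which is exactly the complement of the range $2 \le l \le m \le n$ we must treat, so the induction will be set up to march $l$ upward (or march $n$ or $m$ downward within a fixed value of $l$).

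The key steps, in order, are as follows. First, I set up the inductive scaffolding: choose, for each parity class of $(l,m,n)$ and each relative size of $l,m,n$, precisely which crossing to resolve, and verify using Proposition~\ref{knotor} and the orientation conventions around Figure~\ref{Pn468} that the orientations and grading shifts (the $n_-$ column and the shifts $\sigma_L,\tau_L,\sigma_U,\tau_U$ of Table~\ref{GradingShift}) transform correctly under the skein sequence — this is where the $LL$ versus $LR$ ambiguity for odd $l$, even $m,n$ must be tracked carefully, since the inductive proof genuinely needs both. Second, for each of the (many) cases I write down the predicted $Kh$ of the two resolved links using the inductive hypothesis and the known formulas, plot them in the $(t,q)$-grid, and read off the long exact sequence. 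Third — and this is the crux — I must determine the connecting (boundary) maps $\partial$ in the exact sequence: the sequence only determines $Kh(P(-l,m,n))$ up to the rank of $\partial$ in each bidegree, so I need an independent argument pinning down $\partial$. Here the $\delta$-grading restriction (all three pieces live in three adjacent $\delta$-gradings, from Turaev genus $1$ via \cite{CK}, \cite{DFKLS}, \cite{CKS}) severely limits which arrows are possible, and the Lee spectral sequence of \cite{Lee} — which forces the final answer to decompose into knight's moves and exceptional pairs, and whose $E_\infty$-page is two-dimensional and computable from the signature — provides exactly the extra constraint needed to fix the ranks of $\partial$ in the remaining ambiguous cases. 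The decomposition into $V[a,E]$-type pieces in Definitions~\ref{lower} and~\ref{upper} is precisely engineered so that ``knight's move plus exceptional pair'' bookkeeping closes up under the inductive step.

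The main obstacle I expect is the third step: controlling the connecting homomorphism. In the cases where the two resolved links have Khovanov homologies that ``interleave'' in the grid with no room for a nonzero differential, the exact sequence splits and the computation is bookkeeping; but in the cases where there is room, one must show the differential is (or is not) an isomorphism onto its possible target. The tools available are (i) the $\delta$-grading bound, which kills most arrows outright; (ii) rank/Euler-characteristic constraints, since the graded Euler characteristic of $Kh$ is the Jones polynomial, computable for pretzel links from \cite{Landvoy}, so the alternating sum of dimensions is always known and forces $\mathrm{rk}\,\partial$ in many bidegrees; (iii) the Lee spectral sequence, whose $E_\infty$ page has total dimension $2^{(\#\text{components})}$ situated in $\delta$-gradings governed by the signature, which together with the knight's-move structure pins down the parity and often the exact value of each ambiguous rank; and (iv) in a handful of genuinely stubborn cases, a second skein sequence resolving a different crossing, giving a second set of constraints whose simultaneous solution is unique. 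I would organize the write-up so that steps one and two are dispatched uniformly with a single lemma handling orientations and grading shifts, then treat the eight parity cases of the upper summand and the six cases of the lower summand in parallel, isolating the few subcases (notably $m = l$ and $n = l$, and the $l$-odd-$m,n$-even case) where the connecting map requires the extra Lee-spectral-sequence input, so that the bulk of the proof reduces to the routine grid bookkeeping that the $V[a,E]$ notation was designed to streamline.
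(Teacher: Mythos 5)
Your high-level roadmap is the same as the paper's: induction via the unoriented skein exact sequence, the $\delta$-grading bound (Turaev genus $1$), the Jones polynomial for base cases, and the Lee-spectral-sequence ``knight's move plus exceptional pair'' structure to constrain cancellations. But there is a genuine gap at exactly what you flag as the crux — determining the connecting map — and a missing tool, so the proof as outlined would not close.

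First, the sequencing. You hedge between ``march $l$ upward'' and ``march $m$ or $n$ downward,'' hoping that $\delta$-gradings, Euler characteristic, and Lee $E_\infty$ constraints will fix the ambiguous ranks in whatever order you proceed, with a second skein triangle as fallback. That is not enough. The Euler characteristic is automatic from exactness of the triangle and adds nothing beyond what the sequence already says; and the paper's Section~\ref{strategy} makes explicit that the naive unraveling (reducing $m$ to reach the quasi-alternating $P(-l,l-1,n)$) hits unresolvable ambiguities at the steps $P(-l,l-1,n) \to P(-l,l,n)$ and, for even $l$ or $n$, at $P(-l,l,n) \to P(-l,l+1,n)$. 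The cure is not a second skein sequence but a carefully chosen \emph{order}: reach $P(-l,l,n)$ or $P(-l,l+1,n)$ first by reducing $m$, then reduce $n$; and for odd $l$ and even $n$, compute $P(-l,l+1,n)$ from $P(-l,l+1,n-1)$ (with $n-1$ odd, already known) rather than from $P(-l,l,n)$. Your proposal does not identify this ordering and, without it, the inductive step is genuinely underdetermined in several parity cases.

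Second, the base case. You assert the Lee $E_\infty$ page is ``computable from the signature,'' but for the non-quasi-alternating knots here that is false — the $E_\infty$ placement is governed by the Rasmussen $s$-invariant, which need not equal $-\sigma$. The paper pins it down by a geometric input you do not mention: $P(-l,l,n)$ is slice, hence $s = 0$ by Rasmussen's theorem, and this is what kills the last ambiguous generator ($e_4$) in Lemma~\ref{oddhalfway}. Similarly, the base case $P(-l,l,0)$ is resolved by recognizing it as a connected sum $\overline{T_{2,l}} \# T_{2,l}$ (so its Jones polynomial factors and its homology sits in $\delta = \pm 1$ by sliceness), not merely by citing \cite{Landvoy}. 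Without these two inputs the very first layer of the induction does not close, regardless of how carefully the later parity cases are handled.
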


\begin{definition}\label{LUDef}
  As above, we will sometimes write $Kh(P(-l,m,n)) = L \oplus U$. This
  means $L := q^{\sigma_L} t^{\tau_L} L_{l,m,n}$ and $U :=
  q^{\sigma_U} t^{\tau_U} U_{l,m,n}$ as in
  Theorem~\ref{maintheorem}. We will also say that $L$ is ``based at''
  $(\tau_L, \sigma_L)$ and $U$ is ``based at'' $(\tau_U, \sigma_U)$.
\end{definition}

\begin{corollary}\label{DeltaCorollary}
  $Kh(P(-l,m,n))$ is contained in three $\delta$-gradings which depend
  only on the orientation pattern ($RR$, $LL$, $LR$, or $RL$). These
  gradings are $\delta_{max}$, $\delta_{max}-2$, and $\delta_{max}-4$,
  where $\delta_{max}$ is given in Table~\ref{GradingShift}.
\end{corollary}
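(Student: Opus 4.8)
The plan is to read off the $\delta$-grading support directly from the explicit formula in Theorem~\ref{maintheorem}, using the fact that both summands $L_{l,m,n}$ and $U_{l,m,n}$ are, by construction, supported in exactly the two adjacent $\delta$-gradings $\delta \in \{0,2\}$ (as noted in Section~\ref{genform}: the sequence $\tilde V[a]$ lives in $\delta = 0$, and applying $V[-,E]$ adds knight's-move partners in $\delta = 2$ via the factor $1+q^4 t$ and exceptional-pair partners in $\delta = 2$ via $1 + q^2$; so $V[a,E]$ is always supported in $\delta \in \{0,2\}$). First I would observe that multiplying a space supported in $\delta \in \{0,2\}$ by $q^a t^b$ shifts its support to $\delta \in \{a - 2b, a - 2b + 2\}$. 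Hence $q^{\sigma_L} t^{\tau_L} L_{l,m,n}$ is supported in $\delta$-gradings $\{\sigma_L - 2\tau_L,\ \sigma_L - 2\tau_L + 2\}$ and $q^{\sigma_U} t^{\tau_U} U_{l,m,n}$ in $\{\sigma_U - 2\tau_U,\ \sigma_U - 2\tau_U + 2\}$.

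Next I would compute $\sigma_L - 2\tau_L$ and $\sigma_U - 2\tau_U$ for each of the four orientation patterns from Table~\ref{GradingShift} and check two things: (i) in every case $\sigma_U - 2\tau_U = (\sigma_L - 2\tau_L) - 2$, so the two summands occupy overlapping pairs of $\delta$-gradings whose union is the three consecutive values $\{\sigma_L - 2\tau_L - 2,\ \sigma_L - 2\tau_L,\ \sigma_L - 2\tau_L + 2\}$; and (ii) $\sigma_L - 2\tau_L + 2 = \delta_{max}$ as listed in the table. For instance, for $RR$ we have $\sigma_L - 2\tau_L = (-2m - 2n - 1) - 2(-m-n) = -1$, and $\sigma_U - 2\tau_U = (4l - 2m - 2n - 1) - 2(2l - m - n + 1) = -3$, giving the three $\delta$-gradings $\{-3,-1,1\}$ with $\delta_{max} = 1$; the $LL$, $LR$, and $RL$ rows are identical one-line checks yielding $\delta_{max} = n-l+1$, $m-l+1$, and $n+m+1$ respectively. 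Since $\delta_{max}$, $\delta_{max}-2$, $\delta_{max}-4$ are then exactly the union of the two $\delta$-supports, the corollary follows — provided we also confirm the homology is genuinely \emph{nonzero} in the bottom $\delta$-grading $\delta_{max} - 4$, which is clear because $U_{l,m,n}$ always contains the generator placed at the origin (its $\tilde V[a']$ part is nonempty as the underlying sequences begin with $1$), contributing a nonzero class in $\delta = \sigma_U - 2\tau_U = \delta_{max} - 4$, and similarly $L_{l,m,n}$ is nonempty giving nonzero homology in $\delta_{max}$ and $\delta_{max}-2$.

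The only real content is bookkeeping: one must trust that the definitions of $L_{l,m,n}$ and $U_{l,m,n}$ really do land in $\delta \in \{0,2\}$, which is immediate from Definition~\ref{SequenceDef} and the definition of $V[a,E]$, and then verify the four arithmetic identities $\sigma_L - 2\tau_L + 2 = \delta_{max}$ from Table~\ref{GradingShift}. The main (very mild) obstacle is simply making sure the shift convention $(q^a t^b V)_{i,j} = V_{i-b, j-a}$ is applied with the correct signs, so that the $\delta$-shift is $a - 2b$ and not $a + 2b$ or $2b - a$; once that is pinned down, the result is a direct consequence of Theorem~\ref{maintheorem} and needs no induction or further input.
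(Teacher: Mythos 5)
Your proposal is essentially the paper's (implicit) argument: the paper gives no separate proof of Corollary~\ref{DeltaCorollary}, presenting it as a direct read-off from Theorem~\ref{maintheorem} together with the facts that $L_{l,m,n}$ and $U_{l,m,n}$ live in $\delta\in\{0,2\}$ by construction and that $q^a t^b$ shifts $\delta$ by $a-2b$. Your arithmetic for all four rows of Table~\ref{GradingShift} checks out. One small inaccuracy: the final parenthetical claim that $U_{l,m,n}$ ``always contains the generator placed at the origin'' is false in the degenerate case $m=l$ with $l$ odd, where $g=0$ and $U_{l,l,n}=0$ (as the paper itself notes before Lemma~\ref{oddhalfway}); but that remark is superfluous, since the corollary only asserts containment in three $\delta$-gradings, not that all three are occupied.
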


\subsection{An alternative approach to the grading shift data.}
\label{sec:an-altern-appr}

The values in Table~\ref{GradingShift} may seem a bit mysterious. The
remainder of this section will discuss an alternative way of
specifying the values of $\sigma_L$, $\tau_L$, $\sigma_U$, and
$\tau_U$ in this table.

Write $Kh(P(-l,m,n)) = L \oplus U$ as in Definition~\ref{LUDef}. We
could ask how much the $t$-gradings of $L$ and $U$ overlap. As it
turns out, this difference follows a simple pattern. For the purpose
of this section, we only care about the generic case $m \neq l$:

\begin{proposition}\label{NewOverlapProp}
  With the above notation, let $\Delta$ be the smallest $t$-grading in
  which $U$ has a nonzero generator, minus the highest $t$-grading in
  which $L$ has a nonzero generator. Suppose $m \neq l$. If $l$ is
  odd, then $\Delta = 1$. If $l$ is even, then $\Delta = -1$.
\end{proposition}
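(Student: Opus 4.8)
The plan is to read off the two quantities appearing in the definition of $\Delta$ directly from the explicit formulas of Definitions~\ref{lower} and~\ref{upper} together with Table~\ref{GradingShift}, and to subtract. The first observation that makes this tractable is that $\Delta$ is independent of the orientation chosen for $P(-l,m,n)$: reorienting one component shifts all of $Kh(P(-l,m,n))$ --- hence both $L$ and $U$ --- by a single monomial $q^{a}t^{b}$, which moves $\min_{t}U$ and $\max_{t}L$ by the same amount. So for each of the eight parity possibilities I may work with whichever orientation pattern of Table~\ref{GradingShift} is available, and I never have to confront the $LL$/$LR$ ambiguity for odd $l$ with $m,n$ both even.

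Next I would pin down the bottom of $U$. Every sequence appearing in Definition~\ref{upper} is supported on an interval $[1,k]$ and begins with a nonzero entry (each of $a_{k}$, $b_{k}$, $c_{k}$ starts with $1$), and in each case the exceptional-pair data leave index $1$ occupied --- either by a surviving generator of $\tilde{V}[a']$ or by an exceptional pair --- so $U_{l,m,n}$ has its lowest generator in $t$-grading $0$, and $\min_{t}U=\tau_{U}$. Then I would pin down the top of $L$. Since $m\neq l$, only cases~\ref{oddmneql} and~\ref{evenmneql} of Definition~\ref{lower} are relevant. In each I would (i) compute the length of the defining sequence, (ii) check that its last entry is nonzero, and (iii) check that the topmost index is the bottom half of a knight's-move pair rather than an exceptional pair --- only the former raises $\max_{t}$ by one. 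This gives $\max_{t}L_{l,m,n}$ as an explicit function of $l$: one value when $l$ is odd, another when $l$ is even, and hence $\max_{t}L=\tau_{L}+\max_{t}L_{l,m,n}$.

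Finally I would assemble $\Delta=\min_{t}U-\max_{t}L=\tau_{U}-\tau_{L}-\max_{t}L_{l,m,n}$. For the four parity patterns with $l$ odd, the relevant rows of Table~\ref{GradingShift} are $RR$, $LL$, and $LR$, and the key point is that in all three the difference $\tau_{U}-\tau_{L}$ takes the same value --- in particular it is independent of $m$ and $n$ --- and exceeds the value of $\max_{t}L_{l,m,n}$ from step (iii) by exactly $1$, so $\Delta=1$. For the four patterns with $l$ even I would use the $RL$ row, where $\tau_{U}-\tau_{L}$ instead falls one short of $\max_{t}L_{l,m,n}$, so $\Delta=-1$. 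I expect the main obstacle to be purely bookkeeping: getting the knight's-move-versus-exceptional-pair determination right at the top index of $L$'s sequence, and handling the small-$l$ degenerations of Definition~\ref{lower} (where pieces such as $c_{l-4}$ collapse to empty sequences) so that the length count in step (i) is still correct. Verifying the claim $\min_{t}U=\tau_{U}$ across all eight cases of Definition~\ref{upper} is routine but should not be skipped.
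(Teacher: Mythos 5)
The paper never gives a self-contained proof of Proposition~\ref{NewOverlapProp}. It appears in the section on the ``alternative approach to the grading shift data,'' where it is used as an \emph{input} (together with the $\delta_{\max}$ column and the exceptional-pair $t$-gradings from Proposition~\ref{linking}) to \emph{derive} the rest of Table~\ref{GradingShift}; the proposition itself is ultimately a consequence of Theorem~\ref{maintheorem}, whose proof is by induction on the skein sequence. Your proposal runs the same verification in the opposite direction --- read $\max_t L$ and $\min_t U$ off Definitions~\ref{lower} and~\ref{upper} and Table~\ref{GradingShift} and subtract --- which is exactly the check the paper leaves implicit. Your plan is sound: the orientation-independence observation disposes of the $LL$/$LR$ ambiguity; $\min_t U_{l,m,n} = 0$ holds in all eight cases because the first nonempty block of each sequence begins with a $1$; $\max_t L_{l,m,n} = 2l$ for odd $l$ and $2l+1$ for even $l$ (the top index is always a knight's-move foot, never a lone exceptional pair); and $\tau_U - \tau_L$ equals $2l+1$ in rows $RR$, $LL$, $LR$ and $2l$ in row $RL$, giving $\Delta = 1$ and $\Delta = -1$ respectively.

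The one place where you should be more careful than ``I expect the main obstacle to be purely bookkeeping'' is the degeneration you flag at $l=2$: a \emph{literal} reading of Definition~\ref{lower}\eqref{evenmneql} there produces $(1)\cdot c_{-2}\cdot(0)^3\cdot(1)\cdot\overline{b_2}$, and if one treats $c_{-2}$ as the empty sequence (as the paper's ``if $k\le 0$, the sequences are empty'' suggests), the sequence has length $7$, not $2l+1=5$, and your step (i) count --- and hence $\Delta$ --- would come out wrong. Checking against $P(-2,3,3)=T_{3,4}$ shows the intended lower-summand sequence is $(1,0,1,0,1)$ of length $5$, so the formula must actually be read so that the ``length $l-4$'' of $c_{l-4}$ is honored even when negative (i.e.\ it eats into the adjacent block). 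This is not a flaw in your approach --- you correctly anticipated that the small-$l$ cases need attention --- but it is a genuine subtlety in Definition~\ref{lower} itself, and your proof should spell out the convention explicitly rather than hoping the length count ``is still correct.''
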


We will take the values of $\delta_{max}$ in Table~\ref{GradingShift}
as given. Since $L$ is contained in the top two $\delta$-gradings and
$U$ is contained in the bottom two $\delta$-gradings,
Proposition~\ref{NewOverlapProp} pins down how the $L$ summand relates
to the $U$ summand. All that is left is to fix one overall reference
point.

In other words, knowing the $q$- and $t$-gradings of any generator of
the upper or lower summand will suffice to pin down the overall
gradings of the Khovanov homology. Luckily, though, the $t$-gradings
of exceptional pairs are easily computable through linking-number data:

\begin{proposition}[Lee \cite{LeeEndo}]\label{linking}
  If $L$ is a knot, then $Kh(L)$ has an exceptional pair in $t =
  0$. If $L$ is a $2$-component link with components $L_1$ and $L_2$,
  then $Kh(L)$ has exceptional pairs in $t = 0$ and $t =
  2lk(L_1,L_2)$. If $L$ is a $3$-component link with components $L_1$,
  $L_2$, and $L_3$, then $Kh(L)$ has exceptional pairs in $t = 0$, $t
  = 2(lk(L_1,L_2) + lk(L_1,L_3))$, $t = 2(lk(L_1,L_2) + lk(L_2,L_3))$,
  and $t = 2(lk(L_1,L_3) + lk(L_2,L_3))$. These are all the
  exceptional pairs in $Kh(L)$.
\end{proposition}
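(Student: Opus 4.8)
The plan is to read everything off Lee's analysis of the deformed homology $Kh'(L)$ (the limit of the Lee spectral sequence of \cite{Lee}, \cite{LeeEndo}). Write $L = L_1 \cup \cdots \cup L_c$ for the components of $L$. Over $\Q$, Lee showed that $\dim Kh'(L) = 2^c$, with a canonical generator $s_E$ for each subset $E \subseteq \{1,\dots,c\}$, where $s_E$ is the Lee class attached to the orientation of $L$ obtained by reversing the components indexed by $E$. I would then invoke Lee's refinement (see \cite{Lee}, \cite{LeeEndo}) that for each $E$ the two combinations $s_E + s_{E^c}$ and $s_E - s_{E^c}$ lie in the same homological degree but in $q$-filtration levels differing by exactly $2$; hence in $Kh(L)$ they contribute a pair of generators in a common $t$-grading with $q$-gradings two apart, i.e.\ an exceptional pair. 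The $2^{c-1}$ complementary pairs $\{E, E^c\}$ thus account for all $2^c = \dim Kh'(L)$ generators of $Kh'(L)$, which yields the final sentence of the proposition.

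Next I would pin down the homological degree of each such pair via Lee's linking-number formula. The key observation is that reversing the orientation of the components in $E$ flips the sign of precisely those crossings of a fixed diagram of $L$ joining a component of $E$ to a component of $E^c$ (crossings internal to $E$ or to $E^c$, and self-crossings, are unaffected), and hence changes the number of negative crossings by $2\sum_{i\in E,\, j\notin E} lk(L_i,L_j)$. Tracking this through Khovanov's homological normalization — set up so that the generators attached to the original orientation sit in $t = 0$ — shows that $s_E$ lies in homological degree $t = 2\sum_{i\in E,\, j\notin E} lk(L_i,L_j)$. This quantity is manifestly unchanged under $E \mapsto E^c$, consistent with $s_E$ and $s_{E^c}$ forming one pair. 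I would record the Hopf link ($lk = \pm 1$, exceptional pairs at $t = 0$ and $t = \pm 2$) as a sign-convention sanity check.

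With the formula in hand, the proposition follows by enumerating the complementary pairs $\{E,E^c\}$. For a knot, $c = 1$ and the only pair is $\{\emptyset,\{1\}\}$, giving one exceptional pair at $t = 0$. For a $2$-component link, the pairs are $\{\emptyset,\{1,2\}\}$ and $\{\{1\},\{2\}\}$, giving exceptional pairs at $t = 0$ and $t = 2\,lk(L_1,L_2)$. For a $3$-component link, the pairs are $\{\emptyset,\{1,2,3\}\}$, $\{\{1\},\{2,3\}\}$, $\{\{2\},\{1,3\}\}$, and $\{\{3\},\{1,2\}\}$, giving the four $t$-values $0$, $2(lk(L_1,L_2)+lk(L_1,L_3))$, $2(lk(L_1,L_2)+lk(L_2,L_3))$, and $2(lk(L_1,L_3)+lk(L_2,L_3))$ listed in the statement.

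The one genuinely substantive input — and the step I expect to be the main obstacle to a fully self-contained argument — is the filtration claim that $s_E + s_{E^c}$ and $s_E - s_{E^c}$ differ by exactly two filtration levels, which is what guarantees that a surviving complementary pair is an honest exceptional pair rather than two classes lying in the same, or in distant, $q$-gradings. I would take this (and the dimension count $\dim Kh'(L) = 2^c$) from \cite{Lee}, \cite{LeeEndo} rather than reprove it; the remainder is just bookkeeping with the linking-number formula above.
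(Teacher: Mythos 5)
Your proposal is correct in outline and is essentially the route one would take from Lee's paper: the dimension count $\dim Kh'(L) = 2^c$, the indexing of surviving Lee classes by subsets $E$ of the set of components, the homological-degree formula $t(s_E) = 2\sum_{i\in E,\, j\notin E} lk(L_i,L_j)$, and the pairing $E \leftrightarrow E^c$ accounting for all survivors. Note, though, that the paper itself gives no proof of this proposition --- it is stated as a quotation of Lee's results with the citation \cite{LeeEndo} --- so there is no internal argument to compare against; the content is all imported.

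One point worth sharpening beyond your own flag. The filtration claim you isolate as the ``substantive input'' --- that $s_E + s_{E^c}$ and $s_E - s_{E^c}$ sit in $q$-filtration levels differing by \emph{exactly} $2$ --- is in fact not proved in \cite{LeeEndo}; for knots it is Rasmussen's theorem in \cite{Rslice}, and for links it requires a further generalization. So attributing it to Lee alone overstates what \cite{LeeEndo} supplies. In the present paper, however, this does not matter: the proposition is only ever invoked for links whose Khovanov homology is already known, or assumed inductively, to be well-structured in the sense of Definition~\ref{wellstructured}. In that situation the $E_\infty$ page of the Lee spectral sequence must consist exactly of the exceptional pairs by a dimension count, so the ``differ by exactly $2$'' statement is automatic, and the only facts genuinely needed from Lee are the rank $2^c$ and the homological-degree formula. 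Your enumeration of the complementary pairs $\{E, E^c\}$ for $c = 1, 2, 3$ is correct and closes the argument, and the Hopf-link sanity check is the right one.
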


Once we know the $t$-grading of a generator of an exceptional pair, we
can pin down its $q$-grading by knowing its $\delta$-grading. But we
know whether each exceptional pair is in $L$ or in $U$, since this
data was included in the definitions of $L$ and $U$, and we know the
$\delta$-gradings of generators of $L$ and $U$. Hence, given the
values of $\delta_{max}$ in Table~\ref{GradingShift} and the overlap
data in Proposition~\ref{NewOverlapProp}, we can populate the rest of
Table~\ref{GradingShift} simply from the definitions of $L_{l,m,n}$
and $U_{l,m,n}$.

For example, consider the top row of Table~\ref{GradingShift}. Looking
at $Kh(P(-l,m,n))$ for odd values of $\{l,m,n\}$ with $m \neq l$ will
be enough to fill in this row, since for these values $P(-l,m,n)$ is
oriented $RR$. In this case, the summand $L$ fills $2l+1$ columns of
the grid, because the sequence defining $L_{l,m,n}$ has length $2l$
and the knight's move pair on the far right end spills over into the
next column. The summand $U$ fills $-2l+m+n$ columns (the sequence
defining $U_{l,m,n}$ in this case has length $2g+h = -2l+m+n$, and
this time there is an exceptional pair on the far right).

Proposition~\ref{NewOverlapProp} tells us that we should put the
columns of $U$ immediately to the right of the columns of $L$. The one
exceptional pair is on the last column of $U$, so this column must be
$t = 0$. Hence the first column of $U$ is $t = 2l-m-n+1$, and so
$\tau_U = 2l-m-n+1$. The one generator of $U$ in this leftmost column
is in $\delta = -3$, so its $q$-grading must be $4l-2m-2n-1$. Hence
$\sigma_U = 4l-2m-2n-1$.

The summand $L$ occupies $2l+1$ columns to the left of $t = 2l-m-n+1$,
so the leftmost column of $L$ is $t = -m-n$. The generator of $L$ in
this column lies in $\delta = -1$, so its $q$-grading is
$-2m-2n-1$. Hence $\tau_L = -m-n$ and $\sigma_L = -2m-2n-1$. The rest
of the table can be completed similarly.

\section{Preliminaries for the proof}

\subsection{Skein sequences and cancellations.}

Our main computational tool will be the unoriented skein exact
sequence in Khovanov homology, stated below.
\begin{theorem}\label{skein}{\em (See \cite{KPKH}.)}
  Let $D$ be a diagram for an oriented link, and consider a crossing
  $c$. One of the two resolutions of $c$, say $D_o$, is consistent
  with the orientations, and will be called the ``oriented
  resolution.'' One, say $D_u$, is not (the ``unoriented''
  resolution). Let $\epsilon = n_-(D_u) - n_-(D)$, where $n_-$ denotes
  the number of negative crossings in a diagram. Then, if $c$ is a
  positive crossing, we have the sequence
  \[
  \xymatrix{\cdots \ar[r]^-{f} & q^{3\epsilon + 2} t^{\epsilon + 1}
    Kh(D_u) \ar[r] & Kh(D) \ar[r] & q Kh(D_o) \ar[r]^-{f} & \cdots}
  \]

  If $c$ is a negative crossing, we have the sequence
  \[
  \xymatrix{\cdots \ar[r]^-{f} & q^{-1} Kh(D_o) \ar[r] & Kh(D) \ar[r]
    & q^{3\epsilon + 1} t^{\epsilon} Kh(D_u) \ar[r]^-{f} & \cdots.}
  \]
\end{theorem}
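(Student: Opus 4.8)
The plan is to derive the exact sequence from the mapping--cone structure of the Khovanov chain complex, following \cite{KPKH}; since the statement is quoted from there, what I give is a sketch of that argument. First I would recall the cube--of--resolutions construction: for a diagram $D$ with $n = n_+(D) + n_-(D)$ crossings, the Khovanov complex $CKh(D)$ is a direct sum over vertices $v \in \{0,1\}^n$ of tensor powers of $V$ (one factor per circle of the resolution $D_v$), with an internal homological shift by $|v|$ and an internal quantum shift by $|v|$, followed by the global normalization shifts $q^{\,n_+(D)-2n_-(D)}$ in quantum degree and a homological shift by $-n_-(D)$. The differential counts edges of the cube via merge and split maps, and in particular can only increase $|v|$.

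Next, fixing the distinguished crossing $c$, I would split the cube along the $c$--coordinate. Write $D^0$ and $D^1$ for the diagrams obtained by $0$-- and $1$--resolving $c$. Because the differential never decreases $|v|$, the ``$1$--layer'' (the summands with $c$ resolved the $1$--way) is a subcomplex of $CKh(D)$, and the ``$0$--layer'' is the quotient; equivalently $CKh(D)$ is the mapping cone of the edge map from the $0$--layer to the $1$--layer, and there is a short exact sequence of complexes $0 \to (\text{$1$--layer}) \to CKh(D) \to (\text{$0$--layer}) \to 0$. Taking homology gives the long exact sequence $\cdots \to H(\text{$1$--layer}) \to Kh(D) \to H(\text{$0$--layer}) \to \cdots$, with the connecting map raising homological degree by one; this is the shape of both sequences in the statement.

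The remaining work is to identify each layer with a shift of $Kh(D_o)$ or $Kh(D_u)$ and to compute the shifts. The $0$--layer is $CKh(D^0)$ up to the discrepancy between the normalization shifts of $D$ and of $D^0$ (which has one fewer crossing), and the $1$--layer is $CKh(D^1)$ up to the analogous normalization discrepancy \emph{together with} the internal $+1$ shifts coming from $c$ being $1$--resolved. Suppose $c$ is positive, so (in the standard convention) the oriented resolution is $D^0 = D_o$; since smoothing $c$ leaves the signs of the other crossings unchanged, $n_+(D_o) = n_+(D)-1$ and $n_-(D_o) = n_-(D)$, and the normalization discrepancy is a single $q$, recovering the term $q\,Kh(D_o)$. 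For $D^1 = D_u$ we choose an arbitrary orientation, and by definition $n_-(D_u) = n_-(D) + \epsilon$; since $D_u$ has $n-1$ crossings, $n_+(D_u) = n_+(D) - 1 - \epsilon$. A short computation then shows the $1$--layer equals $q^{\,3\epsilon+2} t^{\epsilon+1} CKh(D_u)$, giving the remaining term. The negative--crossing case is symmetric: now $D_o = D^1$ and $D_u = D^0$, and the same arithmetic yields $q^{-1}Kh(D_o)$ from the $1$--layer and $q^{\,3\epsilon+1}t^{\epsilon}Kh(D_u)$ from the $0$--layer.

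I expect the only real obstacle to be this grading bookkeeping: one must keep straight three diagrams ($D$, $D_o$, $D_u$), the fact that $D_o$ carries the inherited orientation while $D_u$ is given an arbitrary one (the discrepancy being exactly $\epsilon$), and the interaction between the global normalization shifts and the internal edge shift in the cube. Everything else is formal once the convention identifying the ``$0$--resolution'' with the oriented resolution of a positive crossing is pinned down, and the negative case follows by the mirror of this convention.
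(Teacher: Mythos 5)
The paper does not prove this theorem; it quotes it from \cite{KPKH} with the attribution ``(See \cite{KPKH}.)'' and uses it as a black box. Your sketch is a correct reconstruction of the standard mapping-cone argument one would find there: the $1$-layer of the cube is a subcomplex (since the differential only increases the cube weight), the $0$-layer is the quotient, and the long exact sequence in homology is the skein sequence. Your grading bookkeeping also checks out: with $n_\pm(D_o) = (n_+(D)-1, n_-(D))$ for a positive crossing, the $0$-layer picks up exactly a $q^1$, and with $n_-(D_u) = n_-(D)+\epsilon$, $n_+(D_u) = n_+(D)-1-\epsilon$, the $1$-layer (with its internal $|v|\mapsto|v|-1$ shift) picks up $q^{3\epsilon+2}t^{\epsilon+1}$; the negative-crossing case follows identically with the roles of the $0$- and $1$-resolutions swapped. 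This is the standard proof and it is correct.
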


Schematically, the skein exact sequence will put us in the following
situation: we have two known bigraded vector spaces $V$ and $W$ and a
map $f: V \to W$ fitting in an exact sequence:
\[
\xymatrix{\cdots \ar[r]^f & W \ar[r] & X \ar[r] & V \ar[r]^f & W
  \ar[r] & \cdots,}
\]
where $X$ is unknown. Our goal will be to determine $X$. We
know that $f$ preserves $q$-grading and increases $t$-grading by
one. Thus, $X$ arises from ``cancelling'' pairs of generators from $V
\oplus W$ as in the following definition:

\begin{definition}
  Let $V$ and $W$ be bigraded vector spaces. A cancellation of $V
  \oplus W$ is a subspace $X$ of $V \oplus W$ obtained by eliminating
  ``horizontal pairs,'' i.e. two-dimensional subspaces $\Q v \oplus \Q
  w$ where $v \in V$, $w \in W$, $v$ and $w$ have the same
  $q$-grading, and the $t$-grading of $w$ is one greater than the
  $t$-grading of $v$. For an example, see Figure~\ref{stdcancel}.
\end{definition}

Hence we can determine $X$ by looking at all possible cancellations of
$V \oplus W$ and rejecting all but one of them. We will accomplish
this task by using the structure of Khovanov homology coming from the
Lee spectral sequence (see \cite{Lee}). For links with Khovanov homology
contained entirely in three adjacent $\delta$-gradings, this spectral
sequence implies that the Khovanov homology breaks up into the
knight's moves and exceptional pairs discussed earlier (see
Figure~\ref{knightsep}). Motivated by this fact, we make the following
definition.

\begin{definition}\label{wellstructured}
  A bigraded vector space $V$ is well-structured if it is a sum of
  knight's moves and exceptional pairs.
\end{definition}

For our sequences, all three spaces $V$, $W$, and $X$ will be
well-structured. Thus, in trying to determine $X$, we can first
disregard all cancellations of $V \oplus W$ which are not
well-structured. Out of the remaining options, it will turn out that
we can uniquely determine the correct choice of $X$ by looking at the
number and placement of exceptional pairs. This general strategy will
be implemented in the proofs below.

We can save ourselves some work with a general lemma, for which we
need notation related to that of Section~\ref{genform}:

\begin{definition}\label{SequenceAndE}
  Let $V$ be a well-structured vector space contained in two adjacent
  $\delta$-gradings. Let $E_V: \Z \rightarrow \N$ be the function such
  that $E_V(i)$ is the number of exceptional pairs of $V$ in
  $t$-grading $i$, and let $a_V$ be the sequence such that $V =
  V[a_V,E_V]$.  Note that $a_V$ is supported on the ``actual''
  $t$-gradings of $V$, not necessarily on $[1,k]$ for any $k$.
\end{definition}

\begin{remark}\label{nodepend}
  Once $E_V$ is well-defined, it is clear that $a_V$ is well-defined,
  by subtracting values of $E_V$ from ranks of $V$. However, it is a
  bit tricky to see why $E_V$ is well-defined. One way to do this is
  as follows: look at the highest $t$-grading $i$ of $V$. Note that
  $V$ is contained in two $\delta$ gradings, so there are two possible
  $q$-gradings, say $j$ and $j+2$, corresponding to $t = i$. Then
  $E_V(k) = 0$ for $k > i$ and $E_V(i) = \dim V_{i,j}$.  Now that we
  know $E_V(i)$, we can look at $t$-grading $i-1$: $\dim V_{i-1,j-2} -
  E_V(i-1) = \dim V_{i,j+2} - E_V(i)$, since both count the number of
  knight's move pairs whose lower generator lies in bigrading
  $(i-1,j-2)$. Hence we can deduce the value of $E_V(i-1)$. Continuing
  this process, $E_V$ is well-defined. Furthermore, it is clear that
  $E_V(i)$ depends only on $V_{k,*}$ for $k \geq i$.

  One could equivalently begin with the lowest $t$-grading rather than
  the highest one, and see that $E_V(i)$ alternatively depends only on
  $V_{k,*}$ for $k \leq i$.
\end{remark}

\begin{lemma}\label{noeps}
  In the above situation, suppose $V$ and $W$ are contained in the
  same two adjacent $\delta$-gradings, and suppose also that $E_X \geq
  E_V + E_W$. Then in fact $X = V \oplus W$, i.e. no cancellations are
  possible.
\end{lemma}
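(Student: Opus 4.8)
The key observation is that exceptional pairs are ``rigid'' in the following sense: by Remark~\ref{nodepend}, the function $E_Y$ of any well-structured space $Y$ (contained in two adjacent $\delta$-gradings) is determined by a recursive procedure that reads off ranks one $t$-grading at a time, and moreover $E_Y(i)$ depends only on $Y_{k,*}$ for $k \geq i$. So the plan is to count exceptional pairs of $X$ directly and compare with the hypothesis $E_X \geq E_V + E_W$.

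First I would set up the cancellation: by the discussion preceding the lemma, $X$ is obtained from $V \oplus W$ by eliminating some collection of horizontal pairs $\Q v \oplus \Q w$ with $v \in V$, $w \in W$, $\deg_q v = \deg_q w$, and $\deg_t w = \deg_t v + 1$. Write $c_i$ for the number of such pairs whose $W$-generator $w$ lies in $t$-grading $i$ (equivalently, whose $V$-generator lies in $t$-grading $i-1$). Since $V$ and $W$ occupy the same two $\delta$-gradings, say $\delta_0$ and $\delta_0 + 2$, so does $X$; thus $E_X$ is well-defined. Now I would run the recursion of Remark~\ref{nodepend} on $X$ from the top $t$-grading downward, alongside the (componentwise) recursions computing $E_V$ and $E_W$, and show by downward induction on $i$ that $E_X(i) \leq E_V(i) + E_W(i)$ for every $i$. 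The point is that removing a horizontal pair lowers a rank in the top $q$-grading at some $t$-grading $i$ by $c_i$ and a rank in the bottom $q$-grading at $t$-grading $i-1$ by $c_i$; feeding this into the alternating-difference recursion $\dim X_{i-1,\mathrm{bot}} - E_X(i-1) = \dim X_{i,\mathrm{top}} - E_X(i)$ shows the $c_i$'s partly cancel, and a short bookkeeping argument gives $E_X(i) \leq E_V(i) + E_W(i)$ — in fact the natural outcome is $E_X(i) = E_V(i) + E_W(i) - (\text{something nonnegative built from the }c_j)$.

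Combining this with the hypothesis $E_X \geq E_V + E_W$ forces $E_X = E_V + E_W$, and moreover forces the nonnegative correction term to vanish, which I expect to pin down $c_i = 0$ for all $i$: i.e.\ no horizontal pairs were removed, so $X = V \oplus W$. Alternatively — and this may be cleaner — one argues by total dimension: each cancelled horizontal pair drops $\dim X$ by $2$, while $E_X = E_V + E_W$ together with $a_X$ being obtained from $a_V + a_W$ by subtracting $E$-values (Remark~\ref{nodepend}) forces the sequence $a_X$ to equal $a_V + a_W$ as well, hence $\dim X = \dim V + \dim W$ by comparing Poincaré polynomials of the decompositions in Definition for $V[a,E]$; so no cancellations occurred.

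The main obstacle will be the bookkeeping in the downward induction: making precise how the removed horizontal pairs interact with the knight's-move/exceptional-pair decomposition, and in particular verifying that a removed horizontal pair can never \emph{create} an exceptional pair that wasn't already accounted for in $E_V + E_W$ (so that the inequality $E_X \leq E_V + E_W$ genuinely holds rather than just the dimension count). I would handle this by working entirely at the level of the recursion in Remark~\ref{nodepend}, never with the geometric pairs themselves, so that the argument reduces to a monotonicity statement about that recursion under the specific rank changes induced by horizontal cancellation.
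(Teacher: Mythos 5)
Your plan is correct and rests on the same two ingredients as the paper's proof: the alternating-difference recursion of Remark~\ref{nodepend} and the observation that a horizontal cancellation removes one generator from $V$ in the higher $\delta$-grading and one from $W$ in the lower $\delta$-grading. The paper, however, avoids the full downward induction and the bookkeeping you flag as the main obstacle by choosing, for contradiction, a cancelling pair $(e,f)$ whose $W$-generator $f$ lies in the \emph{highest possible} $q$-grading, say with bigrading $(i,j)$. Above that grading $X$ coincides with $V\oplus W$, so Remark~\ref{nodepend} gives $E_X(i+1)=E_V(i+1)+E_W(i+1)$ for free; combining the three well-structuredness recursions $\dim Y_{i,j}-E_Y(i)=\dim Y_{i+1,j+4}-E_Y(i+1)$ (for $Y=V,W,X$) with $\dim X_{i,j}<\dim V_{i,j}+\dim W_{i,j}$ and $\dim X_{i+1,j+4}=\dim V_{i+1,j+4}+\dim W_{i+1,j+4}$ yields $E_X(i)<E_V(i)+E_W(i)$ in one step, contradicting the hypothesis. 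Your full downward induction would also close, but be aware that the naive monotonicity does not: tracking the cancellation counts $c_k$ through the recursion gives $D(i)=D(i+1)+c_{i+2}-c_i$ where $D=E_X-E_V-E_W$, so one needs the strengthened invariant $D(i)=-(c_i+c_{i+1})$ to conclude $D\le 0$ and then $c_i\equiv 0$ from the hypothesis. One small slip worth correcting: a cancelling pair removes the $W$-generator from the \emph{bottom} $q$-grading at its $t$-level and the $V$-generator from the \emph{top} $q$-grading one $t$-level lower, the reverse of what you wrote; this does not affect the substance of the plan but would flip signs if the bookkeeping were carried through literally as stated.
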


\begin{proof}
  Suppose $X$ is some nontrivial well-structured cancellation. Choose
  a cancelling pair of generators $(e \in V, f \in W)$ in the highest
  possible $q$-grading. Say the bigrading of $f$ is $(i,j)$.

  By well-structuredness of $V$ and $W$, we have the following:
  \begin{align*}
    &\dim V_{i,j} - E_V(i) = \dim V_{i+1,j+4} - E_V(i+1); \\
    &\dim W_{i,j} - E_W(i) = \dim W_{i+1,j+4} - E_W(i+1); \\
    &\dim X_{i,j} - E_X(i) = \dim X_{i+1,j+4} - E_X(i+1).
  \end{align*}
  But since $(e,f)$ cancels, we have
  \begin{equation}\label{lessthan}
    \dim X_{i,j} < \dim V_{i,j} + \dim W_{i,j}.
  \end{equation}
  Since $(e,f)$ is the highest pair to cancel, $\dim X_{i+1,j+4} =
  \dim V_{i+1,j+4} + \dim W_{i+1,j+4}$. By Remark~\ref{nodepend},
  $E_X(i+1) = E_V(i+1) + E_W(i+1)$ since $X = V \oplus W$ in
  $t$-gradings $\geq i+1$. Hence, making the appropriate substitutions
  and cancellations in \eqref{lessthan}, we obtain
  \[
  E_X(i) < E_V(i) + E_W(i),
  \]
  contradicting our assumption.
\end{proof}

In particular, if $V$ and $W$ have no exceptional pairs (i.e. $E_V$
and $E_W$ are identically zero), then there can be no nontrivial
well-structured cancellation of $V \oplus W$.

One type of cancellation which occurs frequently is the following
``standard cancellation:'' suppose $V$ and $W$ are contained in the
same two adjacent $\delta$-gradings, with $E_V(i) > 0$ and $E_W(i+1) >
0$. Then one can single out two generators each from $V$ and $W$ in
the configuration of Figure~\ref{stdcancel}. Cancelling the generators
in the middle $q$-grading produces a knight's move, so the resulting
space is still well-structured.

\begin{figure}
  \begin{center}
    \input{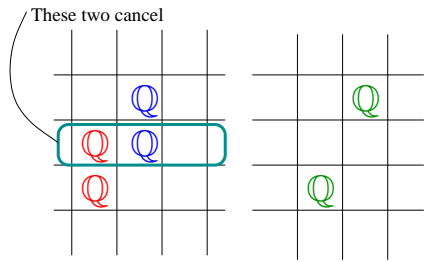}
  \end{center}
  \caption{\label{stdcancel} A standard cancellation. Red generators
    are from $V$, blue generators are from $W$, and green generators
    belong to $X$ after the cancellation.}
\end{figure}

\begin{remark}\label{nomore}
  Below, we will often see a set of cancellations containing several
  standard cancellations, and we will want to show no more
  cancellations can occur. We can argue as follows: suppose we did
  some standard cancellations on $V \oplus W$ to get $X$, and then did
  another cancellation. The additional cancellation would, in fact, be
  a cancellation of $V' \oplus W'$, a proper subspace of $X$ obtained
  from $V \oplus W$ by removing all four generators at each standard
  cancellation (rather than only those in the middle $q$-grading). The
  reason is that, after a standard cancellation, the two remaining
  generators under consideration are in the wrong $\delta$-gradings to
  cancel (the generator of $V$ is in the lower $\delta$-grading, and
  the generator of $W$ is in the higher one). Now, the spaces $V'$ and
  $W'$ are well-structured and have fewer exceptional pairs than $V$
  and $W$, so in many cases we will be able to apply Lemma~\ref{noeps}
  and derive a contradiction.
\end{remark}

We can also identify a situation in which we can conclude a standard
cancellation occurred, given some information about $X$:

\begin{lemma}\label{lstandard}
  Assume $V$ and $W$ are contained in the same two adjacent
  $\delta$-gradings. Suppose $i$ is the lowest $t$-grading of $W$ and
  $E_W(i) > 0$. Assume also that $E_V(i-1) > E_X(i-1)$. Then a
  standard cancellation must have occurred between $t$-gradings $i-1$
  and $i$.
\end{lemma}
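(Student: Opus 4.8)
The plan is to exploit the hypothesis that $i$ is the lowest $t$-grading of $W$, which confines all cancellations that could affect the exceptional-pair count at $t = i-1$ to the single interface between $t$-gradings $i-1$ and $i$. First I would note that $W$ vanishes in every $t$-grading $< i$, so a horizontal pair $(v \in V, w \in W)$ can only involve a $t$-grading $\le i-1$ if $w$ sits in $t$-grading $i$ and $v$ in $t$-grading $i-1$. Consequently $X$ coincides with $V$ in every $t$-grading $\le i-2$, and in $t$-grading $i-1$ the space $X$ is obtained from $V$ by deleting exactly the $c \ge 0$ generators that cancel against generators of $W$ in $t$-grading $i$. Comparing $\delta$-gradings in such a cancelling pair (the two generators share a $q$-grading, but $w$ lies one $t$-grading above $v$, so $\delta_v = \delta_w + 2$, and both $V$ and $W$ occupy the same two adjacent $\delta$-gradings), each deleted $v$ lies in the upper of the two $\delta$-gradings at $t = i-1$, and each corresponding $w$ lies in the lower $\delta$-grading at $t = i$. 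In particular $\dim X_{i-1,\mathrm{bot}} = \dim V_{i-1,\mathrm{bot}}$ and $\dim X_{i-1,\mathrm{top}} = \dim V_{i-1,\mathrm{top}} - c$.

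Next I would run the bottom-up recursion for exceptional-pair counts described in Remark~\ref{nodepend}. Since $X$ and $V$ agree in all $t$-gradings $\le i-2$ (and also in the bottom $\delta$-grading at $t = i-1$), the recursion produces $E_X(k) = E_V(k)$ together with equal knight's-move counts for all $k \le i-2$; then one further step of the recursion at $t = i-1$, using the relation (valid for well-structured spaces) $\dim X_{i-1,\mathrm{top}} - E_X(i-1) = \dim X_{i-2,\mathrm{bot}} - E_X(i-2)$ and the analogous identity for $V$, gives $E_X(i-1) = E_V(i-1) - c$. (The boundary cases where $i-1$ or $i-2$ lies at or below the lowest $t$-grading of $V$ are handled the same way; note the hypothesis $E_V(i-1) > E_X(i-1) \ge 0$ already forces $V$ to be nonzero in $t$-grading $i-1$, so no degenerate case arises.) The assumption $E_X(i-1) < E_V(i-1)$ therefore forces $c \ge 1$: at least one cancellation took place between $t$-gradings $i-1$ and $i$.

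Finally I would upgrade ``a cancellation occurred there'' to ``a standard cancellation occurred there.'' The inequality also yields $E_V(i-1) \ge 1$, and $E_W(i) \ge 1$ is assumed, so the data needed to set up a standard cancellation as in Figure~\ref{stdcancel} is present; it remains to see that one of the $c$ cancelled pairs can be taken to consist of the upper generator of an exceptional pair of $V$ at $t = i-1$ together with the lower generator of an exceptional pair of $W$ at $t = i$. This is forced by well-structuredness of $X$ combined with the fact that $X = V$ below the interface: if a cancelled generator were instead the half of a knight's move, its surviving partner would be an orphaned generator of $X$ in a bidegree where it cannot be re-paired without contradicting the matched counts just computed. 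I expect this last matching step — confirming that the cancellation is genuinely of the standard form, rather than merely a horizontal pair in the correct bidegrees — to be the only delicate point; everything before it is the same bottom-up bookkeeping already used in Remark~\ref{nodepend} and in the proof of Lemma~\ref{noeps}.
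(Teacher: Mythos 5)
Your proof is correct and takes essentially the same route as the paper: both reduce to the recursion identity $\dim V_{i-1,\mathrm{top}} - E_V(i-1) = \dim V_{i-2,\mathrm{bot}} - E_V(i-2)$ (and the same for $X$), use $X = V$ in $t$-gradings $\le i-2$ together with Remark~\ref{nodepend}, and conclude that if nothing cancelled at the $(i-1,i)$ interface then $E_X(i-1) = E_V(i-1)$; you phrase this as a direct computation $E_X(i-1) = E_V(i-1) - c$ while the paper argues by contradiction, but these are the same bookkeeping. Your final paragraph about ``orphaned generators'' is unnecessary: since cancellations and the knight's-move/exceptional-pair decompositions are only dimension counts rather than canonical choices of generators, once $c \ge 1$, $E_V(i-1) > 0$, and $E_W(i) > 0$ one may simply \emph{declare} one cancelled pair to be the standard configuration of Figure~\ref{stdcancel}; no further argument is needed.
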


\begin{proof}
  Let $j$ be the lowest $q$-grading of $W$. As in the previous lemma,
  we have $\dim V_{i-2,j-4} - E_V(i-2) = \dim V_{i-1,j} - E_V(i-1)$
  and $\dim X_{i-2,j-4} - E_X(i-2) = \dim X_{i-1,j} - E_X(i-1)$. We
  also know $V_{i-2,j-4} = X_{i-2,j-4}$ since $i$ is the lowest
  $t$-grading of $W$.

  Suppose no standard cancellation occurred; then $\dim X_{i-1,j} =
  \dim V_{i-1,j}$. After some substitutions, this equation becomes
  $E_X(i-1) - E_X(i-2) = E_V(i-1) - E_V(i-2)$. But since $X = V$ in
  $t$-gradings $\leq i-2$, we have $E_V(i-2) = E_X(i-2)$ by
  Remark~\ref{nodepend}. Thus $E_X(i-1) = E_V(i-1)$, a contradiction.
\end{proof}

By interchanging $V$ with $W$ and replacing $i-1$ with $i+1$, we get
the following:

\begin{lemma}\label{hstandard}
  Suppose $i$ is the highest $t$-grading of $V$ and $E_V(i) >
  0$. Assume also that $E_W(i+1) > E_X(i+1)$.  Then a standard
  cancellation must have occurred between $t$-gradings $i$ and $i+1$.
\end{lemma}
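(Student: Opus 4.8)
The headline is that Lemma~\ref{hstandard} is the mirror image of Lemma~\ref{lstandard}. The involution on bigraded vector spaces sending a Poincar\'e polynomial $P(q,t)$ to $P(q^{-1},t^{-1})$ carries knight's moves to knight's moves and exceptional pairs to exceptional pairs, carries a cancellation $X$ of $V\oplus W$ (with the connecting map raising $t$ by one) to a cancellation $X^\ast$ of $W^\ast\oplus V^\ast$, and carries standard cancellations to standard cancellations. Under it, the hypotheses ``$i$ is the highest $t$-grading of $V$, $E_V(i)>0$, $E_W(i+1)>E_X(i+1)$'' turn into ``$-i$ is the lowest $t$-grading of $V^\ast$, $E_{V^\ast}(-i)>0$, $E_{W^\ast}(-i-1)>E_{X^\ast}(-i-1)$'', which is exactly the input to Lemma~\ref{lstandard}; applying that lemma and reflecting back gives a one-line proof. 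The version I would actually write out just repeats the short computation directly, so I describe that.

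First I would let $j$ be the highest $q$-grading of $V$. Since $V$ lies in two adjacent $\delta$-gradings and $i$ is its highest $t$-grading, the $E_V(i)>0$ exceptional pairs of $V$ in $t$-grading $i$ have their top generators at $(i,j)$, the bigrading $(i+1,j)$ lies in the lower of the two $\delta$-gradings, and the standard cancellation we want to detect is the one cancelling an exceptional-pair top of $V$ at $(i,j)$ against an exceptional-pair bottom of $W$ at $(i+1,j)$. Since $V$, $W$, and $X$ are well-structured (the running assumption), each of $\dim W_{i+1,j}-E_W(i+1)$ and $\dim X_{i+1,j}-E_X(i+1)$ counts the knight's-move pairs (of $W$, resp.\ of $X$) whose lower generator sits at $(i+1,j)$, hence equals $\dim W_{i+2,j+4}-E_W(i+2)$, resp.\ $\dim X_{i+2,j+4}-E_X(i+2)$, by counting those same pairs at their top generators. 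These two identities are the backbone of the argument.

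Then I would feed in two facts. First, $i$ is the highest $t$-grading of $V$ and every cancellation pairs a generator of $V$ with a generator of $W$ one $t$-grading higher, so $X=W$ in all $t$-gradings $\geq i+2$; in particular $X_{i+2,j+4}=W_{i+2,j+4}$ and, by Remark~\ref{nodepend} in its ``start from the highest $t$-grading'' form, $E_X(i+2)=E_W(i+2)$. Second, $V_{i+1,j}=0$, so if no standard cancellation occurred between $t$-gradings $i$ and $i+1$ then nothing at $(i+1,j)$ is cancelled and $\dim X_{i+1,j}=\dim W_{i+1,j}$. Substituting these into the two identities above and subtracting, every dimension term cancels and we are left with $E_X(i+1)=E_W(i+1)$, contradicting $E_W(i+1)>E_X(i+1)$. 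Hence a standard cancellation did occur between $t$-gradings $i$ and $i+1$.

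The step needing the most care is the purely combinatorial bookkeeping: after the top/bottom-and-$V$/$W$ swap relative to Lemma~\ref{lstandard}, one must correctly place $(i,j)$, $(i+1,j)$, and $(i+2,j+4)$ in the upper versus lower $\delta$-grading so that the well-structuredness relations get the right offsets (this is exactly why one uses the ``highest $t$-grading'' rather than the ``lowest $t$-grading'' form of Remark~\ref{nodepend}). The only genuinely interpretive point — reading ``no standard cancellation between $t$-gradings $i$ and $i+1$'' as the equality $\dim X_{i+1,j}=\dim W_{i+1,j}$ — is the same convention already used without comment in the proof of Lemma~\ref{lstandard}, so it carries over verbatim.
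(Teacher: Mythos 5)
Your proposal is correct and matches the paper's own (extremely terse) proof, which reads in its entirety ``By interchanging $V$ with $W$ and replacing $i-1$ with $i+1$, we get the following.'' Your Poincar\'e-polynomial involution $P(q,t)\mapsto P(q^{-1},t^{-1})$ is exactly the precise form of that remark, and your spelled-out direct computation is the word-for-word mirror of the proof of Lemma~\ref{lstandard}, so nothing is missing.
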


\subsection{Jones polynomial calculation.}
We need a Jones polynomial calculation to begin our inductive
proofs. To fix notation, if $L$ is a link, then $V_L(q^2)$ will denote
the (normalized) Jones polynomial of $L$. (As usual, we will write
everything in terms of the variable $q$, which squares to the standard
argument of the Jones polynomial.) The unnormalized Jones polynomial
$\overline{V}_L(q^2)$ is defined to be $(q + q^{-1})V_L(q^2)$.

\begin{lemma}\label{jones}
  Let $l \geq 2$. The unnormalized Jones polynomial of the link
  $P(-l,l,0)_{LR}$ is
  \begin{align*}
    \overline{V}_{P(-l,l,0)_{LR}}(q^2) &= (-1)^l q^{2l+1} +
    \sum_{j=1}^{l-3} (-1)^{j+l+1} q^{2l-2j-1} + (2q + 2q^{-1}) \\
    &+ \sum_{i=1}^{l-3} (-1)^{i+1} q^{-2i-3} +(-1)^l q^{-2l-1}.
  \end{align*}
\end{lemma}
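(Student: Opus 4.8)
The plan is to compute $\overline{V}_{P(-l,l,0)_{LR}}$ by induction on $l$, peeling crossings off the twist regions of the standard pretzel diagram --- in the same spirit as the paper's main argument --- with a direct identification of the link available as a cross-check. For the direct route: since the third tangle of $P(-l,l,0)$ has no crossings, contracting it joins the $(-l)$- and $l$-twist regions into a single closed configuration. When $l$ is odd this is a connected sum of the $(2,l)$- and $(2,-l)$-torus knots (the square knot when $l=3$), so its unnormalized Jones polynomial is $\overline{V}_{T(2,l)}\,\widetilde{V}_{T(2,-l)}$; when $l$ is even one instead gets a three-component ``clasp chain'' built from two $(2,\pm l)$-torus-link clasps sharing a central circle, and here I would use the $LR$-orientation of Figure~\ref{Pn468} to fix the orientation. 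In both cases the Jones polynomials of the $(2,k)$-torus links are standard (and in any event follow from the skein relation by a two-term recursion with base cases $T(2,0)$, $T(2,1)$), and I would then assemble the product, shift by the power of $q$ dictated by the orientation, and simplify the resulting geometric-type sums to the closed form in the statement.

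Carried out inductively instead, I would apply the unoriented skein exact sequence of Theorem~\ref{skein} to one crossing of the $l$-strand: its oriented resolution is $P(-l,l-1,0)$ with a suitable orientation, and its unoriented resolution collapses the whole $l$-twist region to a $\pm\infty$-smoothing, leaving a much simpler link --- a $(2,k)$-torus link or an unknot with removable kinks --- whose Jones polynomial is immediate. Taking graded Euler characteristics turns this sequence into a linear relation expressing $\overline{V}_{P(-l,l,0)}$ through $\overline{V}_{P(-l,l-1,0)}$ and a known term; iterating (and doing the same in the $(-l)$-strand) produces a recursion, and I would finish by checking that the claimed formula satisfies it, plus a small base case such as $l=2$.

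I expect the main obstacle to be bookkeeping rather than anything conceptual. The unnormalized Jones polynomial depends on the orientation --- reversing a component multiplies it by a power of $q$ --- and $P(-l,l,0)$ is a genuine multi-component link when $l$ is even, so the $LR$ pattern has to be tracked through every resolution; the skein sequence, moreover, carries shifts by powers of $q$ and $t$ governed by the sign of the resolved crossing and the change in the number of negative crossings, so all intermediate links must be oriented compatibly. Getting the overall power of $q$ right --- rather than merely the shape of the polynomial --- is where essentially all the care goes. A secondary nuisance is the parity split (the formula, and the recursion, behave differently for $l$ even and $l$ odd) together with the small-$l$ degeneracies of the two sums, which I would verify by hand.
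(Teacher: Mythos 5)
Your ``direct route'' is essentially the paper's proof, but you have introduced a parity split that is not there and, for even $l$, you lose the tool that makes the computation work. The paper observes (citing Lemma~2.3 of~\cite{CK}) that $P(-l,l,0)$ \emph{is} a connected sum $\overline{T_{2,l}}\#T_{2,l}$ for \emph{every} $l\geq 2$: the third twist region having zero crossings means the standard pretzel diagram visibly factors as a connected sum, and this does not depend on whether the two torus links being summed are knots or $2$-component links. What you are describing for even $l$ --- ``two $(2,\pm l)$-torus-link clasps sharing a central circle'' --- is precisely the $3$-component link one gets by connect-summing two $2$-component torus links along one component of each; it is still a connected sum, and the Jones polynomial (via the Kauffman bracket) is still multiplicative. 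By treating even $l$ as a separate, un-analyzed case, you have opened a gap where no gap exists, and you offer no method to actually compute the ``clasp chain'' polynomial. The paper's argument is a single product
\[
V_{P(-l,l,0)_{LR}}(q^2)=\Bigl(q^{-l+1}+\textstyle\sum_{i=1}^{l-1}(-1)^{i+1}q^{-l-2i-1}\Bigr)\Bigl(q^{l-1}+\textstyle\sum_{j=1}^{l-1}(-1)^{j+1}q^{l+2j+1}\Bigr),
\]
valid for all $l$, followed by multiplication by $(q+q^{-1})$. Your orientation concern is the right one to have, but it is resolved by choosing the orientations of the two torus-link factors ($T_{2,l}$ positive, $\overline{T_{2,l}}$ its oriented mirror) so that their connected sum agrees with the $LR$ convention --- no component reversal correction is needed in this lemma; that factor of $q^{3l}$ only enters in the subsequent remark relating $LR$ to $RL$.

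Your alternative inductive route (resolve a crossing in the $l$-twist region and recurse) would also work in principle --- the graded Euler characteristic of the skein triangle does give a two-term recursion, and the unoriented resolution does collapse the entire twist region after Reidemeister~I moves --- but you give no details, and it is strictly more bookkeeping than the connected-sum product, which is why the paper does not do it. Notational point: you write the product as $\overline{V}_{T(2,l)}\,\widetilde{V}_{T(2,-l)}$ with an undefined $\widetilde{V}$; multiplicativity for the \emph{un}normalized invariant requires exactly one of the two factors to be normalized, so be careful that only one $(q+q^{-1})$ appears.
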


\begin{remark}
  When $l$ is odd, the orientation $LR$ is forced by the choices we
  made earlier. When $l$ is even, we will need to consider
  $P(-l,l,0)_{RL}$, whose Jones polynomial differs from that of
  $P(-l,l,0)_{LR}$ by an overall factor of $q$. To pin down this
  factor, note that $P(-l,l,0)_{RL}$ is (after flipping the diagram
  over) just $P(-l,l,0)_{LR}$ with the orientation of the ``outer''
  component reversed. The linking number of this component with the
  rest of the link in $P(-l,l,0)_{LR}$ is $-l/2$. Hence
  $P(-l,l,0)_{RL}$ picks up a factor of $q^{-6(l/2)} = q^{3l}$. The
  resulting formula for the unnormalized Jones polynomial of the link
  $P(-l,l,0)_{RL}$ is
  \begin{align}\label{RLjones}
    \overline{V}_{P(-l,l,0)_{RL}}(q^2) &= (-1)^l q^{5l+1} + \sum_{j=1}^{l-3} (-1)^{j+l+1} q^{5l-2j-1} + (2q^{3l+1} + 2q^{3l-1}) \\
    &+ \sum_{i=1}^{l-3} (-1)^{i+1} q^{3l-2i-3} +(-1)^l
    q^{l-1}. \nonumber
  \end{align}
\end{remark}

\begin{proof}[Proof of Lemma~\ref{jones}]
  The link $P(-l,l,0)_{LR}$ is a connected sum of the positive
  right-handed $(2,l)$ torus link $T_{2,l}$ and its mirror
  $\overline{T_{2,l}}$, the left-handed negatively oriented $(2,l)$
  torus link. Since the Jones polynomial is multiplicative under
  connected sum, we can obtain the Jones polynomial of
  $P(-l,l,0)_{LR}$ easily. We start with the well-known formula
  $V_{T(2,l)}(q^2) = q^{l-1} +
  \sum_{i=1}^{l-1}(-1)^{i+1}q^{l+2i+1}$. Therefore
  \begin{align*}
    V_{P(-l,l,0)_{LR}}(q^2)&= \bigg( q^{-l+1}+\sum_{i=1}^{l-1}(-1)^{i+1}q^{-l-2i-1} \bigg) \bigg( q^{l-1} + \sum_{j=1}^{l-1}(-1)^{j+1}q^{l+2j+1} \bigg) \\
    &= 1 + \sum_{i=1}^{l-1}(-1)^{i+1}q^{-2i-2} + \sum_{j=1}^{l-1}(-1)^{j+1}q^{2j+2} + \sum_{i=1}^{l-1} \sum_{j=1}^{l-1} (-1)^{i+j} q^{2j-2i} \\
    &= (-1)^l q^{2l} + \sum_{j=1}^{l-2} (-1)^{j+l}jq^{2l-2j} -(l-2) q^2 + l - (l-2)q^{-2} \\
    &+ \sum_{i=1}^{l-2}(-1)^{i+1}(l-i-1)q^{-2i-2} + (-1)^l q^{-2l}.
  \end{align*}

  Multiplying by $(q + q^{-1})$, we get the above formula for the
  unnormalized Jones polynomial.
\end{proof}

\subsection{Proof strategy.}\label{strategy}
We will now outline how the proof of the general formula will be
structured. Consider a crossing in the standard diagram for
$P(-l,m,n)$. One resolution of the crossing produces another pretzel
link, with either $l$, $m$, or $n$ reduced by one. The other
resolution produces a torus link whose Khovanov homology is
known. Hence, given an appropriate base case for induction, the skein
exact sequence of the crossing relates two known entities (the
Khovanov homology of a torus link and of a smaller pretzel link) with
the unknown entity we would like to compute.

Thus, the most naive idea for a proof might be to pick one strand of
$P(-l,m,n)$ and unravel it, one crossing at a time, until we reach a
quasi-alternating link whose Khovanov homology we know. One could hope
that at each step, the skein exact sequence provides enough data to
reduce the computation for the larger pretzel link to the computation
for the smaller one, inductively determining $Kh(P(-l,m,n))$.

Unfortunately, the sequence does not always contain enough data; there
are some ambiguities. For example, suppose we tried to unravel the
middle strand to reach the quasi-alternating link $P(-l,l-1,n)$. An
ambiguity would arise in trying to determine $P(-l,l,n)$ from
$P(-l,l-1,n)$. If $l$ or $n$ is even, a further ambiguity arises in
trying to determine $P(-l,l+1,n)$ from $P(-l,l,n)$.

One way around these ambiguities is to unravel the middle strand as
far as possible, and then unravel the rightmost strand until reaching
a quasi-alternating link. For odd $l$ and $n$, this amounts to a
series of reductions from $P(-l,m,n)$, to $P(-l,l,n)$, to
$P(-l,l,l-1)$. Everything in this procedure works, as we will see
below.

For even $l$ or even $n$, this strategy would mean going from
$P(-l,m,n)$, to $P(-l,l+1,n)$, to $P(-l,l+1,l-1)$. If $l$ is even,
each step works out. However, if $l$ is odd and $n$ is even, another
ambiguity arises: the skein sequence does not uniquely determine
$Kh(P(-l,l+1,l+2))$ from $Kh(P(-l,l+1,l+1))$. Luckily, though, by this
time we already know $P(-l,l+1,n)$ for all odd $n$. Hence we only need
to go from $P(-l,l+1,n)$ to $P(-l,l+1,n-1)$ when $n$ is even. In this
case the skein exact sequence does give us enough data.

We will organize the proof as follows: first, we will consider the
case when $l$ is odd. We will prove the formula for $P(-l,l,n)$ and
then complete the proof for the case of odd $l$ and $n$. We will next
deduce the formula for $P(-l,l+1,n)$ (even $n$) from the formula for
$P(-l,l+1,n-1)$. Then we will derive the general formula for
$P(-l,m,n)$ with odd $l$.

For even $l$, the roadmap is a bit simpler. We will prove the formulas
for $P(-l,l,n)$ and $P(-l,l+1,n)$ first, and then deduce the general
formula for $P(-l,m,n)$.

\section{Proof of the general formula for odd $l$}

\subsection{$P(-l,l,n)$ for odd $l$.}

We begin with the special case $m = l$. A glance at
Definition~\ref{upper} reveals that $U_{l,l,n} = 0$ for odd $l$.
Corollary~\ref{DeltaCorollary} tells us we should be proving that the
Khovanov homology lies in $\delta = 1$ and $\delta = -1$, with the
form specified in Definition~\ref{lower}. More precisely, we have the
following lemma, which holds for all $n$ (not just $n \geq l$):

\begin{lemma}\label{oddhalfway} 
  Let $l \geq 3$ be odd and let $n \geq 0$. Let $a^{(n)}$ denote the
  sequence $a_{l-1} \cdot (\frac{l-1}{2})^2 \cdot \overline{a_{l-1}}$
  plus an extra $1$ in the $(l+n)^{th}$ spot. (When $n \geq l$ this is
  the sequence used in the definition of $L_{l,l,n}$.) Then
  \[
  Kh(P(-l,l,n)) = q^{-2l-2n-1}t^{-l-n} V[a^{(n)}, E],
  \]
  where the exceptional pair is in the $(l+n)^{th}$ index (where the
  extra $1$ was added). See Figure~\ref{KhPn552} for an example (the
  case $l = 5$ and $n = 2$).

  Note that this formula is consistent with our general formula,
  regardless of whether $n$ is even or odd.
\end{lemma}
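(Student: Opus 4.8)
\emph{Set-up and base cases.} The plan is to fix $l\geq 3$ odd and induct on $n\geq 0$, applying the unoriented skein exact sequence of Theorem~\ref{skein} to one crossing of the third (the ``$n$-'') twist region in the standard diagram of $P(-l,l,n)$. For $0\leq n\leq l-1$ the link $P(-l,l,n)$ is quasi-alternating by Theorem~\ref{qa} --- in the case $n=0$ it is the connected sum $\overline{T_{2,l}}\#T_{2,l}$ of alternating links, hence alternating --- so its rational Khovanov homology is thin and is pinned down by its Jones polynomial and signature (\cite{MO}, or \cite{Lee} in the alternating case). Using the Jones polynomials of pretzel links (\cite{Landvoy}, and Lemma~\ref{jones} when $n=0$) together with a signature computation, one checks by hand that the answer matches $q^{-2l-2n-1}t^{-l-n}V[a^{(n)},E]$, the unique exceptional pair --- which by Proposition~\ref{linking} lies in $t$-grading $0$ --- landing at the indicated spot of $a^{(n)}$.

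\emph{Inductive step.} Fix $n\geq l$ and assume the formula for $P(-l,l,n-1)$. Resolving a crossing of the third twist region, one resolution reduces that region, producing $P(-l,l,n-1)$, while the other turns the region into a turnback; after the remaining crossings in that region are removed by Reidemeister~I moves, this leaves the $2$-strand pretzel $P(-l,l)$, which is the $2$-component unlink (with Khovanov homology $q^{2}+2+q^{-2}$ in homological degree $0$). Thus Theorem~\ref{skein}, with the shift $\epsilon$ read off from the crossing signs and Table~\ref{GradingShift}, presents $Kh(P(-l,l,n))$ as a cancellation of $V\oplus W$, where $V$ is a grading shift of the known space $Kh(P(-l,l,n-1))$ and $W$ is a grading shift of the Khovanov homology of the $2$-component unlink.

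\emph{Determining the cancellation.} Because $P(-l,l,n)$ is a pretzel link it has Turaev genus $1$, so its homology sits in three adjacent $\delta$-gradings, and the Lee spectral sequence forces it to be a sum of knight's moves and exceptional pairs; because it is a knot (zero or one of $l,l,n$ is even), Proposition~\ref{linking} says it has exactly one exceptional pair, in $t$-grading $0$. Feeding these constraints into Lemma~\ref{noeps} and the standard-cancellation Lemmas~\ref{lstandard}--\ref{hstandard}, one shows that only one cancellation of $V\oplus W$ is admissible: the three generators coming from $W$ attach at the far-right end of the knight's-move staircase coming from $V$, advancing by one step the tail $\overline{a_{l-1}}\cdots(1)$ of $a^{(n-1)}$ and carrying the exceptional pair to the new last index; this produces exactly $q^{-2l-2n-1}t^{-l-n}V[a^{(n)},E]$, and the induction closes.

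\emph{Main obstacle.} The logical frame is routine; the work concentrates in ``determining the cancellation,'' where one must write out the explicit knight's-move/exceptional-pair layout of $V$ and $W$ in the $(t,q)$-plane, see exactly where their generators can pair, and verify that well-structuredness plus the single exceptional pair at $t=0$ leave a unique option. This is made delicate by the $2$-unlink summand $W$ occupying three $\delta$-gradings rather than two, so that Lemma~\ref{noeps} and Remark~\ref{nomore} must be applied to its pieces lying in the correct pair of $\delta$-gradings, and by the need to account for the parity of $n$, which changes the orientation pattern of $P(-l,l,n)$ and hence $\epsilon$ and the grading shifts; the boundary case $n=l$, where the tail first passes the end of the core sequence, also needs a separate look. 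Checking the base cases is a finite but fussy verification, matching the thin homology supplied by \cite{MO} against $a^{(n)}$ for all $n<l$.
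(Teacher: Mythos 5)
The overall skeleton is right—base cases via quasi-alternating thinness, then a skein-sequence induction in the $n$-strand—but the heart of the inductive step is missing, and the cancellation lemmas you cite cannot close it.

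Concretely: in the inductive step, after shifting, $W$ is the $2$-component unlink's homology placed in $t=0$, with generators in $q$-gradings $1$, $-1$, $-1$, $-3$ (call them $e_1,\dots,e_4$). The summand $V$ from $Kh(P(-l,l,n-1))$ lies in $\delta\in\{1,-1\}$, while $e_4$ lies in $\delta=-3$. Lemma~\ref{noeps} and Lemmas~\ref{lstandard}--\ref{hstandard} all carry the standing hypothesis that $V$ and $W$ lie in the \emph{same two} adjacent $\delta$-gradings; here $W$ spans three, so they do not apply directly, and decomposing $W$ ``into pieces'' as you suggest does not recover them, because $e_4$ is exactly the piece that escapes. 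Indeed, after $e_3$ cancels (forced by exceptional-pair counting) there is still a well-structured option in which $e_4$ \emph{survives}, pairing with the remaining $q=-1$ generator to form an exceptional pair in $q$-gradings $\{-3,-1\}$, with $e_1$ then eaten into a knight's move. Proposition~\ref{linking} only fixes the $t$-grading of the exceptional pair, not its $q$-grading, so it cannot distinguish the two alternatives. The paper rules this option out by an extra topological input you do not invoke: $P(-l,l,n)$ is slice, so by Rasmussen's theorem its $s$-invariant is $0$, forcing the exceptional pair to sit in $q\in\{-1,1\}$ and hence forcing $e_4$ to cancel. Without this step (or a substitute, e.g.\ Landvoy's Jones polynomial formula for $P(-l,l,n)$), the claim that ``only one cancellation of $V\oplus W$ is admissible'' is unjustified.

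Two smaller points. First, the paper deliberately runs the induction from $n=0$, not from $n=l$; it even has a remark noting that one \emph{could} use quasi-alternating-ness for $n<l$, but that this does not simplify the inductive step and just complicates the base-case bookkeeping. Your reorganization is legitimate but buys nothing, and it adds the boundary subtlety at $n=l$ that you flag. Second, your description of the surviving part of $W$ (``three generators coming from $W$ attach at the far-right end'') is off: a two-dimensional cancellation occurs (two generators of $W$ pair with the two generators of $V$'s former exceptional pair), so exactly $e_1$ and one of $e_2,e_3$ survive, forming the new exceptional pair one column to the right. Getting this count right matters for verifying that the result is $V[a^{(n)},E]$.
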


\begin{proof}
  We will induct on $n$, starting with $n=0$. By Lemma 2.3 of
  \cite{CK}, $P(-l,l,0) = \overline{T_{2,l}} \# T_{2,l}$ is
  quasi-alternating since $T_{2,l}$ is alternating. Hence, its
  Khovanov homology is contained in two $\delta$-gradings. These must
  be $\delta = \pm 1$ since $P(-l,l,0)$ is slice; in fact, $P(-l,l,n)$
  is slice for all $n$ (see \cite{Starkston} for a nice explanation
  with pictures).

  \begin{figure}
    \begin{center}
      \input{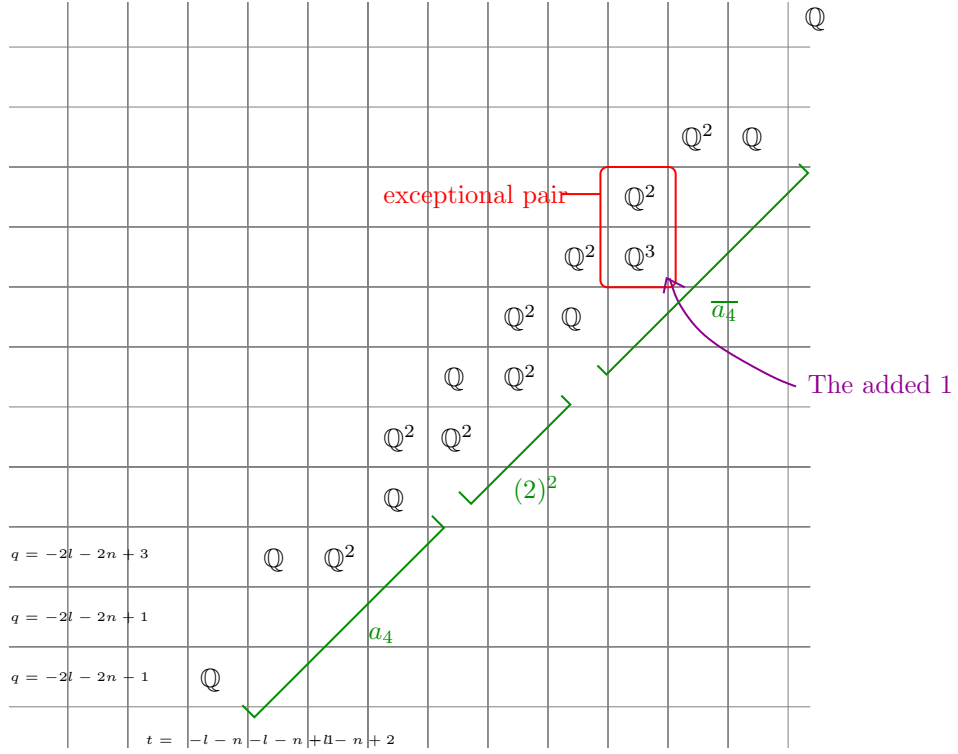}
    \end{center}
    \caption{\label{KhPn552} The Khovanov homology of $P(-5,5,2)$.}
  \end{figure}

  Now, by the thinness just established, the Khovanov polynomial of
  $P(-l,l,0)$ is uniquely determined by the requirement that it be
  well-structured and consistent with its Jones polynomial. To check
  that $q^{-2l-1}t^{-l} V[a^{(0)},E]$ is consistent with the Jones
  polynomial formula given in Lemma~\ref{jones}, plug in $t = -1$. The
  knight's moves (i.e. the sequence \emph{without} the added $1$) give
  us
  \begin{align*}
    -q^{-2l-1}(1 - q^4) \bigg( &\sum_{i=0}^{\frac{l-3}{2}} \bigg(
    (i+1)q^{4i} -iq^{4i+2} \bigg) + \bigg( \frac{l-1}{2} \bigg)
    (q^{2l-2} - q^{2l}) \\
    &+ \sum_{i=0}^{\frac{l-3}{2}} \bigg( \bigg( \frac{l-3}{2} - i
    \bigg) q^{2l+4i+2} - \bigg( \frac{l-1}{2}-i \bigg) q^{2l+4i+4}
    \bigg) \bigg)
  \end{align*}

  which expands to
  \begin{align*}
    &-q^{-2l-1} \bigg( 1 + \sum_{i=0}^{l-4} (-1)^i q^{2i+4} -q^{2l} -
    q^{2l+2} + \sum_{i=0}^{l-4} (-1)^{i+1} q^{2l+2i+6}  + q^{4l+2} \bigg) \\
    &= -q^{-2l-1} + \sum_{i=1}^{l-4} (-1)^{i+1} q^{-2l+2i+4} + (q^{-1}
    + q) + \sum_{i=0}^{l-4} (-1)^i q^{2i+5} - q^{2l+1}.
  \end{align*}

  The exceptional pair adds $q^{-1} + q$, and after reindexing this is
  precisely the formula given by Lemma~\ref{jones}.

  Now suppose the lemma holds for $P(-l,l,n-1)$, and consider
  $P(-l,l,n)$. We will use the skein exact sequence obtained by
  resolving the top crossing on the rightmost strand. This crossing is
  negative, so we must use the second sequence in
  Theorem~\ref{skein}. The standard diagram we use for $P(-l,l,n)$ has
  $l+n$ negative crossings, as listed in
  Table~\ref{GradingShift}. Note that $P(-l,l,n)$ is oriented $RR$ if
  $n$ is odd and $LR$ if $n$ is even, but the corresponding values of
  $n_-$ in Table~\ref{GradingShift} turn out to be the same since $l =
  m$. Similarly, the unoriented resolution, $P(-l,l,n-1)$, has $l+n-1$
  negative crossings. Thus, $\epsilon = -1$. The oriented resolution
  is a diagram for the $2$-component unlink $U_2$, which has Khovanov
  polynomial $q^2 + 2q + q^{-2}$. The sequence is
  \[
  \xymatrix{ \ar[r]^-{f} & q^{-1}Kh(U_2) \ar[r] & Kh(P(-l,l,n)) \ar[r]
    & q^{-2} t^{-1} Kh(P(-l,l,n-1)) \ar[r]^-{f}f &.}
  \]
  For convenience, call the left-hand term $W$, the middle term $X$,
  and the right-hand term $V$. $W$ has two exceptional pairs in $t =
  0$, and $V$ has one exceptional pair in $t = -1$. $X$ has one
  exceptional pair in $t = 0$.

  By induction, plus the grading shifts in the sequence, $V =
  q^{-2l-2n-1}t^{-l-n}V[a^{(n-1)},E]$. The map $f$ preserves the
  $q$-grading and increases the $t$-grading by one.

  Note that $V \oplus W$ looks like the answer we want for $X$, except
  that it has three exceptional pairs rather than one. Two of them
  come from $W$ and have $t = 0$, while the third comes from the
  exceptional pair in $V$ and has $t = -1$. In fact, as discussed
  above, $X$ is a cancellation of $V \oplus W$, and the cancellation
  will cut us down to one exceptional pair. Let $\{e_1, e_2, e_3,
  e_4\}$ be any basis for $W$, in $q$-gradings $1$, $-1$, $-1$, and
  $-3$ respectively (see Figure~\ref{oddlln}). We must determine which
  of the $e_i$ cancel with a generator of $V$. If we could show that
  $e_1$ and $e_2$ survive while $e_3$ and $e_4$ cancel, this would
  imply the formula we want.

  \begin{figure}
    \begin{center}
      \input{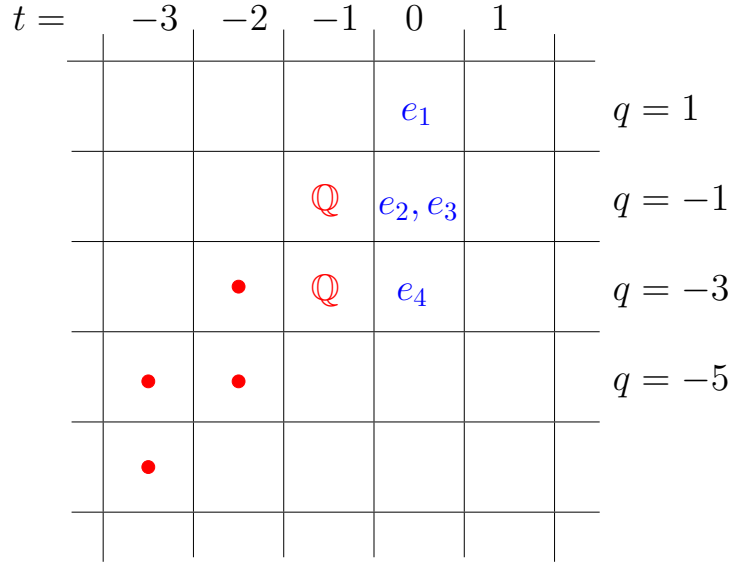}
    \end{center}
    \caption{\label{oddlln} The inductive step of
      Lemma~\ref{oddhalfway}. The red copies of $\Q$ depict the
      exceptional pair of $V$. The red dots are meant to suggest that
      $V$ has generators to the left of the two copies of $\Q$; note
      that it may also have generators to the right of these copies of
      $\Q$ as well, if $n$ is small.}
  \end{figure}

  First, $e_1$ cannot cancel because $V_{1,-1} = 0$ (there is nothing
  ``to the left of $e_1$'' in $V$). Next, note that $\dim V_{-1,-1} =
  \dim V_{-5,-2} + 1$ because of the exceptional pair. But we must
  have $\dim X_{-1,-1} = \dim X_{-5,-2}$ since the only exceptional
  pair of $X$ lies in $t = 0$. So a one-dimensional subspace of
  $\Q(e_2, e_3)$ must cancel (or, since we never pinned down $e_2$ and
  $e_3$, we can just say that $e_3$ cancels).

  Finally, if $e_4$ did not cancel, then it would need to be in an
  exceptional pair in $X$, since we know $\dim X_{1,1} = \dim V_{1,1}
  = \dim V_{5,2} = \dim X_{5,2}$. But since the $q$-grading of $e_4$
  is $-3$, this would imply that the $s$-invariant of $P(-l,l,n)$ is
  $-2$. This contradicts the sliceness of $P(-l,l,n)$ mentioned above,
  since Rasmussen proves in \cite{Rslice} that slice knots must have
  $s = 0$. Hence $e_4$ must cancel, and we have proved our formula for
  $Kh(P(-l,l,n))$.
\end{proof}

\begin{remark} Alternatively, the results of Greene in \cite{Greene}
  imply that $P(-l,l,n)$ is quasi-alternating for $n < l$, so for
  these values of $n$ we could obtain the above result simply by
  looking at the Jones polynomial. However, the general formula for
  the Jones polynomial of pretzel knots (computed by Landvoy in
  \cite{Landvoy}) is a bit complicated, and beginning the induction in
  Lemma~\ref{oddhalfway} at $n=0$ rather than $n=l-1$ makes for a
  cleaner argument.
\end{remark}

\subsection{$P(-l,m,n)$ for odd $l$ and odd $n$.}

\begin{theorem}\label{oddloddnthm}
  The formulas given in Section~\ref{genform} hold for $P(-l,m,n)$
  when $l$ and $n$ are odd.
\end{theorem}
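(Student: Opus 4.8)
The plan is to run the induction outlined in Section~\ref{strategy}: for odd $l$ and odd $n$, we unravel the middle strand from $P(-l,m,n)$ down to $P(-l,l,n)$, whose Khovanov homology is already pinned down by Lemma~\ref{oddhalfway}. So the heart of the argument is the inductive step: assuming the stated formula for $P(-l,m-1,n)$ (with $m-1 \geq l$), deduce it for $P(-l,m,n)$. We resolve the top crossing on the middle strand. This crossing is negative (the middle strand has positive sign $m$, but with the $RR$ orientation forced by Proposition~\ref{knotor} the crossings on that strand are negative), so we use the second sequence of Theorem~\ref{skein}. The unoriented resolution is $P(-l,m-1,n)$, known by induction; the oriented resolution is a torus link $T_{2,n}$ connect-summed with the rest — concretely, resolving the middle strand's crossing in the orientation-consistent way merges the middle band into a single strand, producing a $(2,n)$-torus-type link (an alternating, hence thin, link whose Khovanov homology is classical, e.g.\ from \cite{Landvoy} plus thinness). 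One computes $\epsilon$ from the crossing counts in Table~\ref{GradingShift} and feeds the two known terms into the exact sequence
\[
\xymatrix{ \ar[r]^-{f} & q^{-1} Kh(D_o) \ar[r] & Kh(P(-l,m,n)) \ar[r] & q^{3\epsilon+1}t^{\epsilon} Kh(P(-l,m-1,n)) \ar[r]^-{f} & .}
\]

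The main work is then the cancellation analysis. Write $V$ for the shifted known $Kh(P(-l,m-1,n))$-term and $W$ for the shifted $Kh(D_o)$-term; both are well-structured and, one checks, lie in the same two adjacent $\delta$-gradings as the target $X = Kh(P(-l,m,n))$ (consistency with Corollary~\ref{DeltaCorollary}). The candidate $V \oplus W$ has too many exceptional pairs, and $X$ must be a well-structured cancellation of it with the exceptional-pair count/locations dictated by Proposition~\ref{linking} (the linking numbers here are determined by $l$, $m$, $n$). I would argue as in Lemma~\ref{oddhalfway}: the extremal generators of $W$ on the ``outer'' side cannot cancel because $V$ has nothing to pair them with; then Lemma~\ref{lstandard} and Lemma~\ref{hstandard} force the standard cancellations at the interface between the $L$-part and the new $U$-part; Lemma~\ref{noeps} (via Remark~\ref{nomore}) then rules out any further cancellations. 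The bookkeeping is exactly the manipulation of the sequences $a_k, b_k, c_k$ and the concatenation/reversal operations: one verifies that performing the forced standard cancellations on $V \oplus W$ reproduces the sequence in the relevant case of Definition~\ref{upper} (cases \ref{upperooo} and \ref{upperooe}, according to the parity of $m$), with the $L$-part unchanged as predicted by the remark following Definition~\ref{lower}. The grading shifts come out right by tracking the $q^{-1}$ and $q^{3\epsilon+1}t^\epsilon$ factors against Table~\ref{GradingShift}; alternatively one invokes the reference-point argument of Section~\ref{sec:an-altern-appr}, using that an exceptional pair sits at $t=0$ and the overlap $\Delta$ is governed by Proposition~\ref{NewOverlapProp}.

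I expect two points to require care. First, the torus-link resolution $D_o$ is not literally $T_{2,n}$ but a connected sum (or a torus link with some framing/orientation twist), so its Khovanov homology must be identified precisely, including its grading shift and which of its generators are exceptional-pair halves — getting the orientation/writhe normalization wrong here would throw off every subsequent grading. Second, and more seriously, the cancellation need not be a single clean ``standard cancellation'' but a whole cascade of them along the overlap region of $L$ and $U$; the combinatorial claim that this cascade transforms the sequence for $P(-l,m-1,n)$ into the sequence for $P(-l,m,n)$ — i.e.\ that $a_g \cdot (g/2)^{h} \cdot \overline{a_g}$ etc.\ behave correctly under adding one more copy of the middle block — is the real content, and is where I would spend the bulk of the proof. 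Once the middle-strand induction is complete, the theorem follows since the base case $m = l$ is Lemma~\ref{oddhalfway} and, as noted in Section~\ref{strategy}, for odd $l$ and odd $n$ no orientation ambiguity ever arises.
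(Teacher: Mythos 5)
Your overall strategy is correct and is the one the paper follows: induct on $m$ from the base case $m=l$ (Lemma~\ref{oddhalfway}), resolve the top crossing on the middle strand (a negative crossing), and force the cancellation using Lemmas~\ref{lstandard}, \ref{hstandard}, \ref{noeps} together with Remark~\ref{nomore}. But two of the details you flag as ``requiring care'' are in fact concrete errors that would derail the computation if not fixed. First, the oriented resolution $D_o$ is \emph{not} $T_{2,n}$ nor a connected sum: deleting the middle column of $P(-l,m,n)$ merges the $-l$ twists and the $n$ twists into a single band of $n-l$ twists, so $D_o$ is the torus link $T_{n-l,2}$ --- oriented as $T_{n-l,2}$ when $m$ is even and as $-T_{n-l,2}$ when $m$ is odd (the orientation pattern alternates between $LL$ and $RR$ with each step of the induction, which changes $\epsilon$ and hence the grading shifts). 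Second, since $n$ is odd throughout the theorem, the two cases of Definition~\ref{upper} that arise are \eqref{upperooo} (for odd $m$) and \eqref{upperoeo} (for even $m$), not \eqref{upperooe}, which requires $n$ even.

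You also overestimate the complexity of the cancellation: in the generic case $m>l+1$, the $L$ summand and $W$ occupy disjoint $t$-gradings, so all cancellations occur inside $U \oplus W$, and the structure lemmas force exactly \emph{one} standard cancellation (between the exceptional pair of $U$ at its extremal $t$-grading and the adjacent exceptional pair of $W$), after which Remark~\ref{nomore} plus Lemma~\ref{noeps} stop everything else; there is no cascade. The delicate subcase is $m=l+1$, where $V$'s only summand is $L$ and the exceptional pair of $L$ is adjacent to $W$ --- there the $L$ summand does shed its exceptional pair, matching the difference between cases \eqref{oddmeql} and \eqref{oddmneql} of Definition~\ref{lower}. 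This $m=l+1$ case needs to be handled separately, which your sketch does not mention.
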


\begin{proof}
  We will use the base case $m = l$ to induct. Assume our formula
  holds for $P(-l,m-1,n)$. We will prove it for $P(-l,m,n)$ using the
  skein exact sequence for the top crossing in the middle strand,
  which is a negative crossing regardless of whether $m$ is even or
  odd.

  First assume $m$ is even. Our diagram for $P(-l,m,n)$ is oriented
  $LL$ and so has $l+m$ negative crossings (see
  Table~\ref{GradingShift}). The unoriented resolution is
  $P(-l,m-1,n)$, and its diagram is oriented $RR$ with $m+n-1$
  negative crossings. Hence $\epsilon = n-l-1$. The oriented
  resolution is a diagram for the (right-handed, positively oriented)
  torus link $T_{n-l,2}$. The sequence is
  \[
  \xymatrix@=18pt{ \ar[r]^-{f} & q^{-1}Kh(T_{n-l,2}) \ar[r] & Kh(P(-l,m,n))
    \ar[r] & q^{3n-3l-2} t^{n-l-1} Kh(P(-l,m-1,n)) \ar[r]^-{f} &.}
  \]
  Again, call the left-hand term $W$, the middle term $X$, and the
  right-hand term $V$. $W$ has exceptional pairs in $t = 0$ and $t =
  n-l$. $V$ has an exceptional pair in $t = n-l-1$. $X$ has one
  exceptional pair in $t = 0$.

  Since $V$ is the Khovanov homology of a pretzel knot (up to a
  shift), we may write $V = L \oplus U$ as in
  Definition~\ref{LUDef}. By induction, and because of the grading
  shifts in the above sequence, $L$ is based at $(q,t) = ((-m-(n-1)) +
  (n-l-1), (-2m-2(n-1)-1) + (3n-3l-2)) =(-l-m, -3l-2m+n-1)
  $. Similarly, $U$ is based at $(l-m+1,l-2m+n-1)$. Note that these
  values match up with the $LL$ row of Table~\ref{GradingShift}, which
  is good since $P(-l,m,n)$ is oriented $LL$.

  To analyze the cancellations, we will fix some notation. As we just
  noted, $W$ has two exceptional pairs, one in $t = 0$ and the other
  in $t = n-l$. Pick generators $\{e_1,e_2\}$ for the first
  exceptional pair and $\{f_1, f_2\}$ for the second, such that $e_1$
  and $f_1$ have the higher $q$-gradings (Figure~\ref{oddloddn} shows
  the case $m = l+1$).

  First consider the case $m = l + 1$, as in Figure~\ref{oddloddn}. No
  generators of $W$ except for the $e_i$ and $f_i$ could possibly
  cancel. If either of the $e_i$'s cancelled, $X$ could not have an
  exceptional pair in $t = 0$, a contradiction. On the other hand,
  both $f_i$'s must cancel for $X$ to be well-structured, since $X$
  has no exceptional pairs except in $t = 0$.

  The result of these cancellations is that, first, $L$ loses its
  exceptional pair. Definition~\ref{lower}\eqref{oddmneql} shows that
  this behavior is precisely what was predicted for the lower summand
  of $P(l,l+1,n)$, in contrast with
  Definition~\ref{lower}\eqref{oddmeql}. We already saw that $L$ was
  based at the correct point. Figure~\ref{oddloddn} shows that we also
  pick up an upper summand from $W'$, where $W'$ denotes $W$ without
  its top exceptional pair. Recall that $V$ did not start with an
  upper summand. The new upper summand is based at $(0,n-l-3)$, in
  accord with row $LL$ of Table~\ref{GradingShift}. It has the correct
  form as specified in Definition~\ref{upper}\eqref{upperoeo}, since
  the formula there in the case $g = 0$ just gives the Khovanov
  homology of $T_{n-l,2}$ (minus the top exceptional pair), up to a
  grading shift.
  
  \begin{figure}
    \begin{center}
      \input{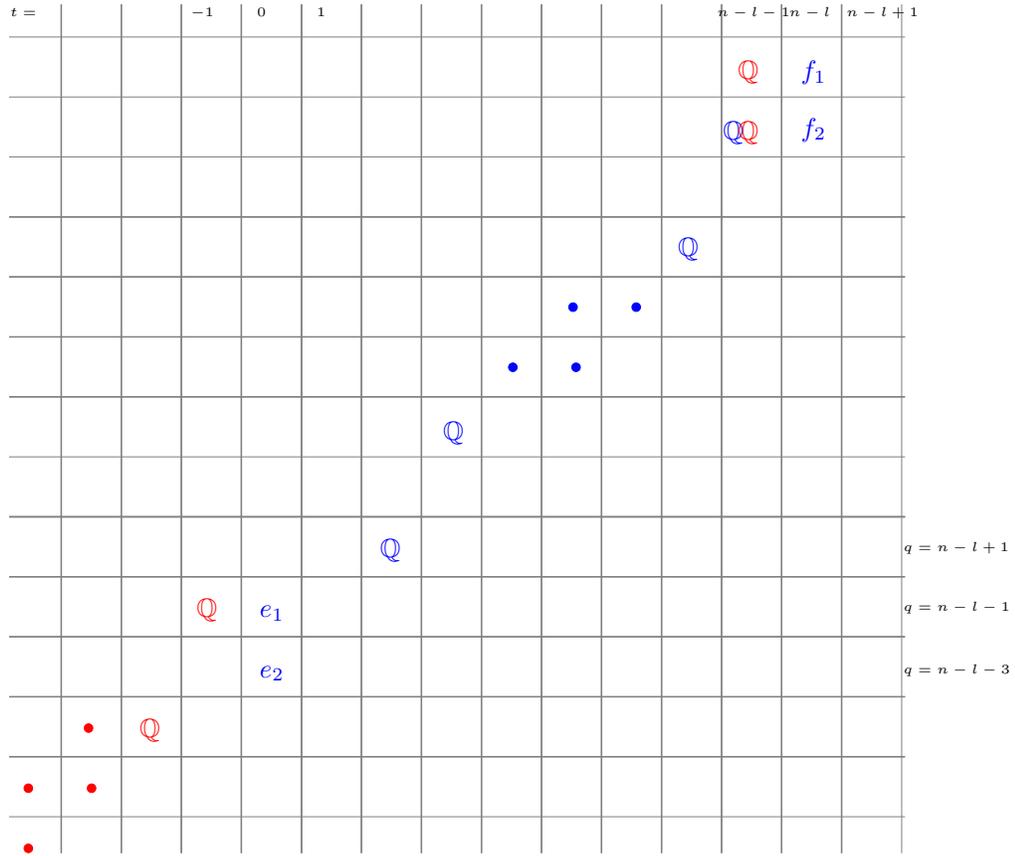}
    \end{center}
    \caption{\label{oddloddn}{The case $m = l+1$ in
        Theorem~\ref{oddloddnthm}. Red denotes $V$ and blue denotes
        $W$.}}
  \end{figure}
 
  Now consider the case $m > l + 1$ ($m$ is still assumed to be
  even). $L$ is based at $t = -l-m$, and it occupies $2l + 1$ columns,
  so the highest $t$-grading of $L$ is $l-m$. But this value is less
  than $-1$, while the lowest $t$-grading in $W$ is $0$. Hence no
  cancellations between $L$ and $W$ can occur, and we have $X = L
  \oplus X'$ where $X'$ is a cancellation of $U \oplus W$. Note that
  both $U$ and $W$ are contained in the same two adjacent
  $\delta$-gradings, namely $\delta = n-l-1$ and $\delta = n-l-3$.

  Now, $Kh(P(-l,m-1,n))$ has zero as its highest $t$-grading (its one
  exceptional pair lies in its highest $t$-grading, so this grading
  must be $0$). Hence $V$ (or equivalently $U$) has $n-l-1$ as its
  highest $t$-grading, and it has an exceptional pair in this
  column. Also, $W$ has one of its two exceptional pairs in $t =
  n-l$. Lemma~\ref{hstandard} applies, and there is a standard
  cancellation between $t = n-l-1$ and $t = n-l$.

  Now Remark~\ref{nomore} applies, and any further cancellations would
  be cancellations of $U' \oplus W'$ (using the notation of
  Remark~\ref{nomore}). But $U'$ no longer has any exceptional pairs,
  and $W'$ only has one of them. The exceptional pair of $X$ must be
  contained in its upper summand $X'$, so it must come from $U' \oplus
  W'$. Thus, by Lemma~\ref{noeps}, no further cancellations are
  possible.

  It is easy now to verify our formula inductively. Recall that $U$
  began its life in the form of
  Definition~\ref{upper}\eqref{upperooo}. The standard cancellation
  just replaces the exceptional pair of $U$ with a knight's move; in
  effect, $U$ becomes $V[a]$ (up to a shift), where $a$ is the
  sequence $a_{m-1-l} \cdot (\frac{m-1-l}{2})^{n-m+1} \cdot
  \overline{a_{m-1-l}}$. Adding in the rest of $W$ amounts to adding
  the sequence $(1,0)^{(n-l)/2 }$, coordinate-wise, to the right side
  of $a$, where the first $1$ carries an exceptional pair. (``To the
  right side'' means the addition is done such that the final $0$ of
  $(1,0)^{(n-l)/2}$ lines up with the last nonzero index of $a$.) This
  addition replaces $\overline{a_{m-l-1}}$ with $\overline{c_{m-l-1}}$
  and $(\frac{m-1-l}{2})^{n-m+1}$ with
  $(\frac{m-l+1}{2},\frac{m-l-1}{2})^{(n-m+1)/2}$. The exceptional
  pair is on the first instance of $\frac{m-l+1}{2}$, giving us the
  upper summand we want as specified in
  Definition~\ref{upper}\eqref{upperoeo}. We noted before that the
  summands are based at the right points, so we have proved our
  formula.

  Now assume $m$ is odd. The diagram for $P(-l,m,n)$ is oriented $RR$
  and has $m+n$ negative crossings, while the diagram for the
  unoriented resolution $P(-l,m-1,n)$ is oriented $LL$ and has $l+m-1$
  negative crossings. Thus $\epsilon = l-n-1$. The oriented resolution
  is $-T_{n-l,2}$, the negatively oriented right-handed $(n-l,2)$
  torus link. The skein sequence is
  \[
  \xymatrix@=18pt{ \ar[r]^-{f} & q^{-1}Kh(-T_{n-l,2}) \ar[r] &
  Kh(P(-l,m,n)) \ar[r] & q^{3l-3n-2} t^{l-n-1} Kh(P(-l,m-1,n))
  \ar[r]^-{f} &.}
  \]

  Again, call the left-hand term $W$, the middle term $X$, and the
  right-hand term $V$. $W$ has exceptional pairs in $t = l-n$ and $t =
  0$, while $V$ has an exceptional pair in $t = l-n-1$. $X$ has an
  exceptional pair in $t = 0$. Write $V = L \oplus U$. By induction,
  and the grading shifts in the sequence, $L$ is based at
  $(-m-n,-2m-2n-1)$ and $U$ is based at $(2l-m-n+1,
  4l-2m-2n-1)$. These values agree with the $RR$ row of
  Table~\ref{GradingShift}.

  The analysis of the cancellations proceeds as above. The lowest
  $t$-grading in $W$ is $t = l-n$, where it has an exceptional pair,
  and $V$ has its exceptional pair in $t =
  l-n-1$. Lemma~\ref{lstandard} ensures a standard cancellation occurs
  between $t = l-n$ and $t = l-n-1$. Afterwards, Remark~\ref{nomore}
  and Lemma~\ref{noeps} apply, and no more cancellations can occur (in
  particular, no cancellation occurs in the highest $t$-grading). Easy
  checks now verify our formulas, as before.
\end{proof}

\subsection{$P(-l,l+1,n)$ for odd $l$ and general $n$.}

Now we will compute $Kh(P(-l,l+1,n))_{LL}$ for even $n$, given that we
know our formula holds for odd $n$. 


\begin{lemma}\label{llplus1lemma}
  For $n \geq l+1$, $Kh(P(-l,l+1,n)_{LL})$ is given by the formula in
  Theorem~\ref{maintheorem}.
\end{lemma}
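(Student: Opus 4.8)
The plan is to bootstrap off Theorem~\ref{oddloddnthm}, which already proves the formula for $P(-l,l+1,n)$ when $n$ is odd (for odd $n\ge l+1$ the triple $(l,l+1,n)$ is nondecreasing with first and last entries odd, and the standard diagram is oriented $LL$). It therefore remains to handle even $n\ge l+1$, and I would do this one step at a time using the unoriented skein exact sequence of Theorem~\ref{skein}, applied to the top crossing of the third strand of the standard $P(-l,l+1,n)_{LL}$ diagram. Table~\ref{GradingShift} records $n_-=l+m=2l+1$ negative crossings out of $2l+1+n$ total, so all $n$ crossings of the third strand are positive (consistently with the proof of Theorem~\ref{oddloddnthm}, where the middle-strand crossings of an $LL$-oriented pretzel are negative), and I would use the first sequence of Theorem~\ref{skein}. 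One resolution of this crossing reduces the third twist region, giving $P(-l,l+1,n-1)$; the other merges the first two twist regions, giving the $(2,m-l)=(2,1)$-torus link, an unknot. Since $n-1$ is odd, $Kh(P(-l,l+1,n-1))$ is already known --- from Theorem~\ref{oddloddnthm} when $n\ge l+3$, and from Lemma~\ref{oddhalfway} in the base case $n=l+1$, where $P(-l,l+1,l)=P(-l,l,l+1)$. I would then compute $\epsilon$ and the resulting grading shifts exactly as in the proof of Theorem~\ref{oddloddnthm}, and check that the base points of the two summands that appear agree with the $LL$ row of Table~\ref{GradingShift}.

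Writing $X:=Kh(P(-l,l+1,n)_{LL})$, the sequence then exhibits $X$ as a well-structured cancellation of $V\oplus W$, where $V$ is a shifted copy of $Kh(P(-l,l+1,n-1))=L\oplus U$ (in the sense of Definition~\ref{LUDef}) and $W$ is a shifted copy of $Kh(\mathrm{unknot})=q+q^{-1}$, i.e.\ a single exceptional pair. The point of this particular reduction is precisely that $W$ is so small. As in the proof of Theorem~\ref{oddloddnthm}, I would first observe that (for $n\ge l+3$) the lower summand $L$ of $V$ lies strictly to the left --- in the $t$-direction --- of the single $t$-grading occupied by $W$, so no cancellation can touch $L$; hence $X=L\oplus X'$ with $X'$ a cancellation of $U\oplus W$, and $U$ and $W$ lie in the same two adjacent $\delta$-gradings. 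Now $P(-l,l+1,n)$ is a $2$-component link, so by Proposition~\ref{linking} $X$ has exactly two exceptional pairs; since the lower summand of $V$ has none (case~\eqref{oddmneql} of Definition~\ref{lower}), $X'$ has exactly two, while $U$ (the upper summand of a knot) and $W$ each contribute exactly one, so $E_{X'}=E_U+E_W$. Lemma~\ref{noeps} then forces $X'=U\oplus W$, and thus $X=L\oplus U\oplus W$ with no cancellation at all.

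Finally I would match $L\oplus U\oplus W$ with the claimed answer $q^{\sigma_L}t^{\tau_L}L_{l,l+1,n}\oplus q^{\sigma_U}t^{\tau_U}U_{l,l+1,n}$. Since case~\eqref{oddmneql} of Definition~\ref{lower} depends only on $l$, we have $L_{l,l+1,n}=L_{l,l+1,n-1}$, so $L$ is already the desired lower summand; and comparing case~\eqref{upperoee} of Definition~\ref{upper} (applied to $P(-l,l+1,n)$, where $g=1$) with case~\eqref{upperoeo} (applied to $P(-l,l+1,n-1)$) shows that $U_{l,l+1,n}$ is precisely $U_{l,l+1,n-1}$ with one extra exceptional pair appended at the far, high-$t$ end of the diagonal --- which is exactly the contribution of $W$. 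I expect the real work here to be bookkeeping rather than conceptual: getting $\epsilon$ right (as in Theorem~\ref{oddloddnthm}, this amounts to counting the surviving negative crossings in the re-oriented unknot diagram), so that the shift $q^{3\epsilon+2}t^{\epsilon+1}$ coming from the first skein sequence places the exceptional pair of $W$ in the $t$-grading $2\,lk$ predicted for the second exceptional pair of $X$ by Proposition~\ref{linking}; and handling the base case $n=l+1$ separately, where $P(-l,l+1,n-1)$ comes from Lemma~\ref{oddhalfway}, the $t$-gradings of $L$ and $W$ abut instead of being separated, and --- because $g=1$ and $h=0$ --- the ``first and last instances of $\frac{g+1}{2}$'' in case~\eqref{upperoee} of Definition~\ref{upper} coincide, so that the formula there must be read as two exceptional pairs in a single $t$-grading and checked directly against the skein output.
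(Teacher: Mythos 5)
Your proposal follows the paper's proof quite closely. Both arguments dispose of odd $n$ via Theorem~\ref{oddloddnthm}, then for even $n$ apply the unoriented skein sequence at the top crossing of the third strand, obtaining the same exact triangle with $W$ a shifted unknot and $V$ a shifted copy of $Kh(P(-l,l+1,n-1))$, the same computation of $\epsilon$, and the same use of Proposition~\ref{linking} to pin down the $t$-gradings of the exceptional pairs of $X$. Both conclude $X=V\oplus W$ with no cancellation.

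The one genuine difference is the mechanism used to rule out cancellations for even $n\ge l+3$. You isolate the lower summand $L$ from the picture by the $t$-grading separation, then apply Lemma~\ref{noeps} to $U\oplus W$ after verifying $E_{X'}=E_U+E_W$; the paper instead argues directly that $W$ has exactly two generators in $t=n-l-1$, $V$ has none there, and $X$ needs an exceptional pair there, so none of $W$ can cancel. Both are valid. Your version is more in the spirit of how the paper handles Theorem~\ref{oddloddnthm}, and has the small advantage of being structural rather than a local column count, but it requires the additional observation that the equality $E_{X'}=E_U+E_W$ holds \emph{pointwise} (i.e.\ the exceptional pairs of $U$ and $W$ live in the distinct $t$-gradings $0$ and $n-l-1$, matching those of $X$); you allude to this via Proposition~\ref{linking} and the linking-number computation, but it is worth stating explicitly since Lemma~\ref{noeps} needs the pointwise inequality. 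Your treatment of the base case $n=l+1$ is essentially a placeholder (``checked directly against the skein output''); the paper gives the concrete check there (that both $V$ and $W$ contribute two generators in $t=0$ and $X$ needs all four for its two exceptional pairs), and that check is the one place a cancellation between $L$ and $W$ is not immediately ruled out by $t$-grading separation, so it does need to be written out.
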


\begin{proof}
  We have already proved the result for odd $n > l+1$ in
  Theorem~\ref{oddloddnthm}. Hence, we may assume $n$ is even. Resolve
  the top crossing on the rightmost strand, a positive crossing. Our
  diagram for $P(-l,l+1,n)_{LL}$ has $2l+1$ negative crossings, while
  the unoriented resolution (a diagram for the unknot) has $l+n-1$
  negative crossings. Hence $\epsilon = n-l-2$. The oriented
  resolution is a diagram for $P(-l,l+1,n-1)$. The skein sequence is
  \[
  \xymatrix@=18pt{\ar[r]^-{f} & q^{3n-3l-4} t^{n-l-1} Kh(U) \ar[r] &
    Kh(P(-l,l+1,n)) \ar[r] & q Kh(P(-l,l+1,n-1)) \ar[r]^-{f} &,}
  \]
  where $U$ is the unknot, with Khovanov polynomial $q +
  q^{-1}$. Denote the left, middle, and right terms by $W$, $X$, and
  $V$ respectively. $V$ has an exceptional pair in $t = 0$ and $W$ has
  an exceptional pair in $t = n-l-1$. $X$ has exceptional pairs in
  both $t = 0$ and $t = n-l-1$ by Proposition~\ref{linking}, since the
  linking number of the two components of $P(-l,l+1,n)_{LL}$ is
  $\frac{n-l-1}{2}$. 

  Suppose $n > l+1$, and write $V = L \oplus U$. It is clear that $L$
  and $U$ are based at the right points to form the lower and upper
  summands for $Kh(P(-l,l+1,n)_{LL})$, by looking at
  Table~\ref{GradingShift} and the grading shift of $q$ in the above
  sequence. The situation is shown in
  Figure~\ref{llplus1odd}. There is no cancellation since
  $V_{*,n-l-1}=0$, $\dim W_{*,n-l-1} = 2$, and $X$ has an exceptional
  pair in $t = n-l-1$.

  \begin{figure}
    \begin{center}
      \input{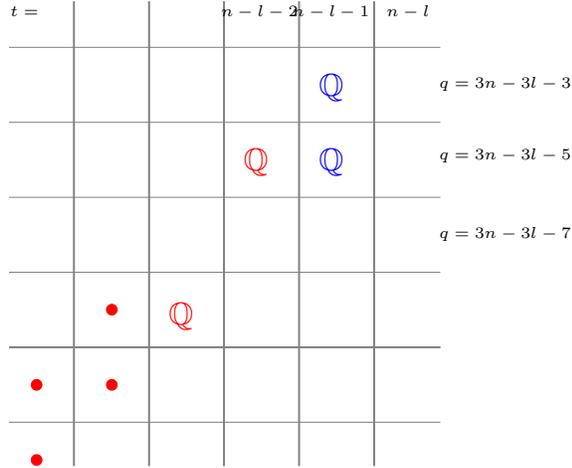}
    \end{center}
    \caption{\label{llplus1odd}{The case $n > l+1$ in
        Lemma~\ref{llplus1lemma}. Red denotes $V$ and blue denotes
        $W$.}}
  \end{figure}

  If $n = l+1$, then $V$ comes from the Khovanov homology of
  $P(-l,l+1,l)$, which was not covered in
  Theorem~\ref{oddloddnthm}. However, $P(-l,l+1,l) = P(-l,l,l+1)$, and
  we computed the Khovanov homology of this knot in
  Lemma~\ref{oddhalfway}. So we still know what $V$ is, by looking at
  the formula above for $P(-l,l,l+1)$.

  In fact, there is still no cancellation in the sequence. We have
  $\dim V_{*,0} = \dim W_{*,0} = 2$, but $X$ needs two exceptional
  pairs in $t = 0$. So $X$ is just $V \oplus W$. One can check that
  this result agrees with our general formula.
\end{proof}

\subsection{$P(-l,m,n)$ for odd $l$ and even $n$.}

\begin{theorem}\label{oddlevennthm}
  When $l$ is odd and $n$ is even, $Kh(P(-l,m,n)_{LR})$ is given by
  the formula in Theorem~\ref{maintheorem}.
\end{theorem}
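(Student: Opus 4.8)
The plan is to induct on $m$, mirroring the argument for Theorem~\ref{oddloddnthm} but now with $l$ odd and $n$ even held fixed. The base case is $m=l$: then $P(-l,l,n)$ is a knot, oriented $LR$ by Proposition~\ref{knotor}, and Lemma~\ref{oddhalfway} gives its Khovanov homology in a form which (by the remark following that lemma) already agrees with Theorem~\ref{maintheorem}; one only checks the base point against the $LR$ row of Table~\ref{GradingShift} specialized to $m=l$. It is also worth recording the compatible statement at $m=l+1$: that is the $LR$ version of the link handled in Lemma~\ref{llplus1lemma}, obtained from $P(-l,l+1,n)_{LL}$ by reversing the orientation of one component, which shifts the bigrading by an amount determined by the relevant linking number, and this should reproduce row $LR$ of Table~\ref{GradingShift}.

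For the inductive step, assume the formula for $P(-l,m-1,n)$ and resolve a crossing in the middle twist region of $P(-l,m,n)_{LR}$. As in Theorem~\ref{oddloddnthm}, one resolution is $P(-l,m-1,n)$ (inheriting the $LR$ orientation, and covered by the inductive hypothesis whether it is a knot, for $m-1$ odd, or a $2$-component link, for $m-1$ even), while the other is the $2$-strand torus knot $T_{n-l,2}$, up to mirroring, since $n-l$ is odd; its Khovanov homology is standard. Reading off the sign of the crossing and the value of $\epsilon$ as in Theorem~\ref{oddloddnthm} (the two parities of $m$ are treated separately), Theorem~\ref{skein} produces an exact triangle relating $X := Kh(P(-l,m,n)_{LR})$ to these two known spaces, and the grading shifts are checked to land the summands where row $LR$ of Table~\ref{GradingShift} predicts; when $m$ is odd the upper summand should take the shape of Definition~\ref{upper}\eqref{upperooe}, and when $m$ is even the shape of Definition~\ref{upper}\eqref{upperoee}.

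The cancellation analysis is then run exactly as for Theorem~\ref{oddloddnthm}. Write the shifted $Kh(P(-l,m-1,n))$ as $L'\oplus U'$ and let $W$ be the shifted Khovanov homology of $T_{n-l,2}$; $X$ is a well-structured cancellation of their direct sum. The columns occupied by $L'$ lie strictly to the left of those of $W$ --- this follows from the length of the sequence defining $L_{l,m-1,n}$ together with the base points, as in the worked example of Section~\ref{sec:an-altern-appr} --- so $L'$ survives intact and is the lower summand $L$, already based correctly. All cancellation happens between $U'$ and $W$, which share the same two adjacent $\delta$-gradings. Proposition~\ref{linking} pins down the number and $t$-positions of the exceptional pairs of $X$ (one, at $t=0$, when $m$ is odd; two, at $t=0$ and at twice the linking number, when $m$ is even); this is exactly the information needed to resolve the ambiguity flagged in Section~\ref{strategy}, since it forces a definite number of standard cancellations. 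Each forced standard cancellation is produced by Lemma~\ref{lstandard} or Lemma~\ref{hstandard}, applied where the exceptional pair of $U'$ abuts an exceptional pair of $W$, and Remark~\ref{nomore} together with Lemma~\ref{noeps} shows no further cancellation can occur. Finally one verifies combinatorially that turning the relevant exceptional pair of $U'$ into a knight's move and appending the remainder of $W$ to the diagonal sequence yields precisely the sequence and exceptional-pair placement of Definition~\ref{upper}\eqref{upperooe} (for $m$ odd) or Definition~\ref{upper}\eqref{upperoee} (for $m$ even), completing the induction.

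I expect the main difficulty to be bookkeeping rather than new ideas: one must keep straight the orientations ($LR$ versus $LL$, and which orientation each resolution inherits), the attendant grading shifts, and the several subcases (parity of $m$; the anomalous behaviour at $m=l+1$, where $T_{n-l,2}$ degenerates to the unknot when $n=l+1$). The conceptual machinery --- Lemmas~\ref{noeps}, \ref{lstandard}, \ref{hstandard}, Remark~\ref{nomore}, and Proposition~\ref{linking} --- is identical to that of Theorem~\ref{oddloddnthm}; the only genuinely new inputs are the $LR$ grading data and the fact that, since $n-l$ is now odd, $W$ carries a single exceptional pair rather than two, which slightly changes the count of standard cancellations in each parity case.
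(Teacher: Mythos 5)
The overall structure of your plan is sound and matches the paper's inductive strategy, but there is a genuine gap at the step $m = l \to m = l+1$ which the paper deliberately avoids and which your argument does not handle.

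You set the base case at $m = l$ and then assert that in the inductive step the columns of $L'$ (the shifted lower summand of $Kh(P(-l,m-1,n))$) lie strictly to the left of those of $W$, so that all cancellation happens in $U' \oplus W$. This is true only for $m > l+1$. For $m = l+1$ the inductive hypothesis gives $V = qKh(P(-l,l,n))$, whose lower summand is governed by the $m=l$ case of Definition~\ref{lower} and hence is \emph{longer} (the sequence has length $l+n+1$, not $2l$); its $t$-range runs all the way up to $t = 0$, which overlaps with $W$. More importantly, the required cancellation there is not a ``standard cancellation'': it is a cancellation between the top half of $V$'s exceptional pair and a knight's-move generator of $W$. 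One can check directly (e.g. for $P(-3,4,6)_{LR}$) that there are \emph{two} well-structured cancellation candidates, both with exceptional pairs at $t=0$ and $t = 2\,lk$, so Proposition~\ref{linking} does not distinguish them, and neither Lemma~\ref{lstandard} nor Lemma~\ref{hstandard} applies since the hypotheses $E_V(i-1) > E_X(i-1)$ (resp. $E_W(i+1) > E_X(i+1)$) fail. Your stated toolkit therefore cannot select the correct $X$ at this step.

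The paper sidesteps this by making $m = l+1$ itself the base case, proved separately by Lemma~\ref{llplus1lemma} (inducting on $n$ via the rightmost strand, where the exceptional pair of $W$ and the linking-number data do pin down the answer uniquely), and only then running the middle-strand induction for $m > l+1$. You do mention Lemma~\ref{llplus1lemma} as a ``compatible statement,'' which is the right ingredient, but your writeup treats it as a consistency check rather than as the necessary base case; as written, your middle-strand induction is asserted to cover $m = l+1$, and there the geography claim about $L'$ and the ``forced standard cancellations'' machinery both fail. The fix is small --- start the middle-strand induction at $m = l+2$, taking both $m=l$ (from Lemma~\ref{oddhalfway}) and $m=l+1$ (from Lemma~\ref{llplus1lemma}, after the $LL \to LR$ grading shift) as given --- but it is exactly the subtlety flagged in Section~\ref{strategy}, and the proposal does not resolve it.

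Once the induction is restricted to $m > l+1$, the rest of your plan is essentially the paper's proof: $L'$ is separated from $W$ in $t$-grading, the parity-of-$m$ case split determines how many standard cancellations occur, Lemmas~\ref{lstandard}/\ref{hstandard} detect the forced cancellation, and Lemma~\ref{noeps} with Remark~\ref{nomore} shows nothing further cancels.
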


\begin{proof}
  Our base case is $m = l+1$, where we can appeal to
  Lemma~\ref{llplus1lemma}. Note that for $P(-l,m,n)$ with $l$ odd,
  $m$ even, and $n$ even, switching orientations from $LL$ to $LR$
  picks up a factor of $q^{-6((n-m)/2)} t^{-2((n-m)/2)} = q^{3m-3n}
  t^{m-n}$, since the linking number of the relevant component with
  the rest of the link is $(n-m)/2$. Comparing this shift with the
  data in Table~\ref{GradingShift}, we see that our formulas hold for
  $P(-l,m,n)_{LR}$ as soon as they hold for $P(-l,m,n)_{LL}$. In
  particular, our formulas hold for $P(-l,l+1,n)_{LR}$.

  Now assume $m > l+1$. The diagram for $P(-l,m,n)_{LR}$ has $l+n$
  negative crossings. We will resolve the top crossing on the middle
  strand, a positive crossing. The unoriented resolution, a diagram
  for the right-handed torus knot $T_{n-l,2}$, has $l+m-1$ negative
  crossings. Thus $\epsilon = m-n-1$. The oriented resolution is a
  diagram for $P(-l,m-1,n)_{LR}$, and the skein sequence is
  \[
  \xymatrix@=18pt{ \ar[r]^-{f} & q^{3m-3n-1}t^{m-n}
    Kh(\overline{T_{n-l,2}}) \ar[r] & Kh(P(-l,m,n)) \ar[r] & q
    Kh(P(-l,m-1,n)) \ar[r]^-{f} &.}
  \]
  Call the left-hand term $W$, the middle term $X$, and the right-hand
  term $V$. $W$ has one exceptional pair in $t = m-n$. If $m$ is even,
  $V$ has one exceptional pair in $t = 0$ and $X$ has exceptional
  pairs in $t = 0$ and $t = m-n$. Similarly, if $m$ is odd, $V$ has
  exceptional pairs in $t = 0$ and $t = m-n-1$, and $X$ has one
  exceptional pair in $t = 0$.

  Write $V = L \oplus U$. It is easy to see, by looking at the $LR$
  row of Table~\ref{GradingShift} and at the shift of $q$ in the above
  sequence, that $L$ and $U$ are based at the correct points. The
  exceptional pair, or pairs, of $V$ fall in the $U$ summand.

  We need only consider cancellations of $U \oplus W$. The exceptional
  pair of $W$ is in $t = m-n$. When $m$ is odd, $U$ has two
  exceptional pairs ($t = 0$ and $t = m-n-1$). When $m$ is even, $U$
  has one exceptional pair in $t = 0$. Hence, when $m$ is even, no
  cancellations can occur by Lemma~\ref{noeps} (note that we only need
  to consider the case $m \leq n$), and it is easy to check our
  formula. When $m$ is odd, Lemma~\ref{lstandard} guarantees a
  standard cancellation between $t = m-n-1$ and $t = m-n$, while
  Remark~\ref{nomore} and Lemma~\ref{noeps} preclude any further
  cancellations. Again, one can now check our formula, finishing the
  computation.
\end{proof}

\section{Proof of the general formula for even $l$}
\label{sec:proof-gener-form}

\subsection{$P(-l,l,n)$ for even $l$.}
We now carry out the strategy of Section~\ref{strategy} for even
$l$. We first compute $Kh(P(-l,l,n))$; recall that we are using the
$RL$ orientation throughout Section~\ref{sec:proof-gener-form}. This
computation will not be used as a base case for induction, but we need
to deal with it anyway since it is not covered by our other
computations.

\begin{theorem}\label{evenhalfway0}
  Let $l \geq 2$ be even and let $n \geq 0$. Let $c$ denote the
  sequence $(1) \cdot c_{l-4} \cdot (\frac{l-2}{2})^6 \cdot
  \overline{c_{l-4}} \cdot (0,1)$. For even $n \geq 0$, let $d^{(n)}$
  denote the sequence $(1,-1,\ldots,1,-1,2)$ of length $n+1$ (when $n
  = 0$, $d^{(0)}$ is just the sequence $(2)$). For odd $n \geq 1$, let
  $d^{(n)}$ be the sequence $(1,-1,\ldots,1)$ of length $n$. Define
  \[
  c^{(n)} = c + d^{(n)},
  \]
  where the addition to $c$ starts in the $(l+1)^{st}$ spot.

  Then if $n < l$,
  \[
  Kh(P(-l,l,n)_{RL}) = q^{l+n-1} V[c^{(n)}, E],
  \]
  where there are exceptional pairs in the first index and the last
  index, plus two exceptional pairs in the $(l+n+1)^{st}$ index if $n$
  is even.

  If $n \geq l$, then $Kh(P(-l,l,n)_{RL})$ is as described in
  Theorem~\ref{maintheorem}.
\end{theorem}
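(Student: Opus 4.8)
The plan is to induct on $n\ge 0$, following the template of the proof of Lemma~\ref{oddhalfway}: at each step resolve the top crossing of the third band (the one carrying $n$ crossings) and feed the resulting skein exact triangle through the cancellation machinery of Section~3. For the base case $n=0$, note that $P(-l,l,0)_{RL}$ is the connected sum $\overline{T_{2,l}}\,\#\,T_{2,l}$, which is quasi-alternating by Lemma~2.3 of \cite{CK} (a connected sum of quasi-alternating links is quasi-alternating, and $T_{2,l}$ is alternating), hence its rational Khovanov homology is thin. By Corollary~\ref{DeltaCorollary} and the $RL$ row of Table~\ref{GradingShift} the two occupied $\delta$-gradings must be the top two, $\delta=l+1$ and $\delta=l-1$. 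A thin, well-structured bigraded vector space supported in two fixed adjacent $\delta$-gradings is determined by its graded Euler characteristic together with the locations of its exceptional pairs, and the latter are prescribed by Proposition~\ref{linking} once one records the pairwise linking numbers of the three components of $P(-l,l,0)_{RL}$. I would then verify directly that $q^{l-1}V[c^{(0)},E]$, with exceptional pairs on the first index, on the last index, and two on the $(l+1)^{\mathrm{st}}$ index, has its exceptional pairs in the positions demanded by Proposition~\ref{linking}, and that substituting $t=-1$ reproduces the unnormalized Jones polynomial \eqref{RLjones}; this last check is the same bookkeeping computation performed for $a^{(0)}$ in the proof of Lemma~\ref{oddhalfway}.

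\textbf{Inductive step.} Assuming the formula for $P(-l,l,n-1)_{RL}$, resolve the topmost crossing of the third band. Since all crossings of the standard $RL$-oriented diagram are positive (cf.\ Figure~\ref{Pn468}, and $n_-=0$ in Table~\ref{GradingShift}), we apply the positive-crossing exact triangle of Theorem~\ref{skein}. Exactly as in Lemma~\ref{oddhalfway}, one of the two resolutions is $P(-l,l,n-1)_{RL}$ and the other is the $2$-component unlink $U_2$ (capping a crossing of the third band turns the pretzel into the unlink); a routine inspection of which resolution is the oriented one and of the negative-crossing counts of the resolved diagrams fixes $\epsilon$ and all grading shifts. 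Writing $X=Kh(P(-l,l,n)_{RL})$, the triangle exhibits $X$ as a well-structured cancellation of a direct sum of the (known, suitably shifted) spaces $Kh(P(-l,l,n-1)_{RL})$ and $Kh(U_2)$, with the connecting map preserving $q$ and raising $t$ by one.

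\textbf{Pinning down $X$; the transition at $n=l$; the main obstacle.} The number and placement of the exceptional pairs of $X$ are forced by Proposition~\ref{linking}: when $n$ is odd, $P(-l,l,n)_{RL}$ is a $2$-component link with two exceptional pairs, and when $n$ is even it is a $3$-component link with four. Comparing these against the exceptional pairs carried by the two input spaces, I would (in sub-cases according to the parity of $n$) locate a forced standard cancellation via Lemma~\ref{lstandard} or Lemma~\ref{hstandard}, and then rule out any further cancellation using Remark~\ref{nomore} and Lemma~\ref{noeps}, so that $X$ is uniquely determined. Unpacking the result, the cancellation converts one exceptional pair of $V[c^{(n-1)},E]$ into a knight's move and appends the contribution of $U_2$; one checks that the net change of the defining sequence is exactly the passage from $d^{(n-1)}$ to $d^{(n)}$ and that the surviving exceptional pairs land where the statement claims. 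For $n\ge l$ one additionally verifies that $q^{l+n-1}V[c^{(n)},E]$ coincides, as a bigraded vector space, with the decomposition $q^{\sigma_L}t^{\tau_L}L_{l,l,n}\oplus q^{\sigma_U}t^{\tau_U}U_{l,l,n}$ of Theorem~\ref{maintheorem}, using Definitions~\ref{lower} and \ref{upper} and the $RL$ row of Table~\ref{GradingShift}. The main obstacle is this bookkeeping: the parity of $n$ changes both the number of components (hence the exceptional-pair input) and the shape of $d^{(n)}$, so the cancellation analysis and the sequence-level verification split into several parallel sub-cases, and one must keep the $RL$ grading conventions consistent while checking that the explicit formula glues correctly onto the Theorem~\ref{maintheorem} form at $n=l$.
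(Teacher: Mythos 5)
Your overall structure (induct on $n$, resolve the top crossing of the third band, compare exceptional pairs as constrained by Proposition~\ref{linking}, and verify the base case against the $RL$ Jones polynomial from Equation~\eqref{RLjones}) matches the paper's. However, the central step of your plan --- ``locate a forced standard cancellation via Lemma~\ref{lstandard} or Lemma~\ref{hstandard}, and then rule out any further cancellation using Remark~\ref{nomore} and Lemma~\ref{noeps}'' --- does not work here, for two reasons. First, all of those lemmas require $V$ and $W$ to be contained in the \emph{same two adjacent} $\delta$-gradings, but $W = q^{3l+3n-1}t^{l+n}Kh(U_2)$ consists of two exceptional pairs whose supports overlap in the middle $\delta$-grading, so $W$ occupies \emph{three} $\delta$-gradings; the hypotheses of the lemmas are simply not satisfied. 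Second, the cancellation pattern that actually occurs is not a single standard cancellation. For odd $n < l$, three of the four generators $e_2, e_3, e_4$ of $W$ must cancel --- $e_2$ and $e_3$ both cancel against the two exceptional pairs of $V$ in $t=l+n-1$, and $e_4$ cancels as well --- leaving only $e_1$ to pair with the last surviving $V$-generator as a knight's move. This destroys one full exceptional pair of $W$ and one full exceptional pair of $V$ outright (not the ``remove the middle pair, keep the outer two as a knight's move'' pattern of Figure~\ref{stdcancel}). The paper establishes all three cancellations by direct dimension counting: $e_1$ cannot cancel because $V$ is zero in that bigrading; the exceptional pairs of $V$ in $t=l+n-1$ force a rank deficit (namely $\dim V_{l+n-1,3l+3n-1} = \dim V_{l+n-2,3l+3n-5} + 2$) that $X$, having no exceptional pairs in $t=l+n-1$ or $t=l+n-2$, cannot carry, so $e_2,e_3$ cancel; and $e_4$ is forced to cancel by a similar count against $\dim X_{l+n,3l+3n+1}$.

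There is a second, related omission: for $n < l$ you must explicitly use that $P(-l,l,n)_{RL}$ is quasi-alternating (by Greene's Theorem~\ref{qa}), so that its Khovanov homology is \emph{thin}, contained in just two $\delta$-gradings $\{l-1,l+1\}$. This thinness is what rules out $e_4$ surviving in the even $n<l$ case --- the paper's argument is that $X_{l+n,3l+3n-3} = 0$ for $\delta$-grading reasons, because the target $\delta$-grading simply does not occur. Your proposal mentions quasi-alternating only for the base case $n=0$; it needs to be invoked throughout the regime $n<l$. Finally, a small point: the sequence $c^{(n)}$ is only defined in the statement for $n<l$, so the verification you describe for $n\ge l$ should instead be that the single cancellation analysis of the step $n-1 = l-1 \to n = l$ (and subsequent $n>l$ steps) carries the $c^{(l-1)}$-form over to the $L_{l,l,n}\oplus U_{l,l,n}$ form of Theorem~\ref{maintheorem}; the relevant subcases in the paper are the ``$n=l+1$'' and ``$n>l+1$'' arguments accompanying Figure~\ref{evenlln} and the parallel even-$n$ argument.
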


\begin{proof}
  The proof is by induction on $n$, as before. When $n = 0$, we know
  $P(-l,l,0)_{RL}$ is quasi-alternating, and its Jones polynomial was
  given in Equation~\eqref{RLjones}. A check similar to that in
  Lemma~\ref{oddhalfway} verifies the formula.

  Suppose the lemma holds for $n-1$, and consider $P(-l,l,n)$. Again,
  we will use the skein sequence from resolving the top crossing on
  the rightmost strand, a positive crossing. Our diagram for
  $P(-l,l,n)$ has no negative crossings. The unoriented resolution, a
  diagram for the 2-component unlink, has $l+n-1$ negative crossings
  (regardless of the orientation chosen). Hence $\epsilon = l +
  n-1$. The oriented resolution is a diagram for
  $P(-l,l,n-1)_{RL}$. The skein sequence for a positive crossing is
  \[
  \xymatrix{\ar[r]^-{f} & q^{3l+3n-1} t^{l+n} Kh(U_2) \ar[r] &
    Kh(P(-l,l,n)) \ar[r] & q Kh(P(-l,l,n-1)) \ar[r]^-{f} &.}
  \]
  Denote the left, middle, and right terms by $W$, $X$, and $V$
  respectively. $W$ has two exceptional pairs in $t = l+n$. If $n$ is
  odd, $V$ has four exceptional pairs (one in $t = 0$, one in $t =
  2l$, and two in $t = l+n-1$), and $X$ has two exceptional pairs (one
  in $t = 0$ and one in $t = 2l$). On the other hand, if $n$ is even,
  then $V$ has two exceptional pairs (in $t = 0$ and $t = 2l$), and
  $X$ has four exceptional pairs (one in $t = 0$, one in $t = 2l$, and
  two in $t = l+n$). The map $f$ preserves the $q$-grading and
  increases the $t$-grading by one.

  First, suppose $n$ is odd and $n < l$; see the left side of
  Figure~\ref{evenlln} for reference. Note that $d^{(n)}$ is just
  $d^{(n-1)}$ with the terminating $2$ replaced by a $1$, and thus
  $c^{(n)}$ is related to $c^{(n-1)}$ in a similar way.

  Hence $V \oplus W$ looks like the answer we want for $X$, except
  that it has six exceptional pairs rather than two. In fact, the
  cancellation process will cut us down to two exceptional pairs. Let
  $\{e_1, e_2, e_3, e_4\}$ be any basis for $W$, in $q$-gradings
  $3l+3n+1$, $3l+3n-1$, $3l+3n-1$, and $3l+3n-3$ respectively. We must
  determine which of the $e_i$ cancel with a generator of $V$. If we
  could show that $e_1$ survives while the rest cancel, this would
  imply the formula we want (by a simple check).

  \begin{figure}
    \begin{center}
      \input{evenlln.pstex_t}
    \end{center}
    \caption{\label{evenlln}{The case of odd $n$ in
        Theorem~\ref{evenhalfway0}. The left side depicts the case $n
        < l$. The middle depicts $n = l+1$. The right side depicts the
        case $n > l+1$. As usual, red denotes $V$ and blue denotes
        $W$.}}
  \end{figure}

  First, $e_1$ cannot cancel because there is nothing ``to the left of
  $e_1$'' in $V$. Next, note that $\dim V_{l+n-1,3l+3n-1} = \dim
  V_{l+n-2,3l+3n-5} + 2$ because of the exceptional pairs in $V$. But
  we must have $\dim X_{l+n-1,3l+3n-1} = \dim X_{l+n-2,3l+3n-5}$ since
  $X$ has no exceptional pair in $t = l+n-1$ or $t = l+n-2$. So both
  $e_2$ and $e_3$ must cancel.

  Finally, if $e_4$ did not cancel, then we would have $\dim
  X_{l+n-1,3l+3n-3} = \dim X_{l+n,3l+3n+1} + 1$. But we need $\dim
  X_{l+n-1,3l+3n-3} = \dim X_{l+n,3l+3n+1}$, again because the only
  exceptional pairs of $X$ are in $t = 0$ and $t=2l$. Hence $e_4$ must
  cancel, and we have proved our formula when $n$ is odd and less than
  $l$.

  The next case for odd $n$ is $n = l+1$. The middle of
  Figure~\ref{evenlln} is a reference here. A similar argument implies
  that $e_2$, $e_3$, and $e_4$ all cancel, and a quick check verifies
  that our results for the lower and upper summands of $X$ agree with
  Definition~\ref{lower}\eqref{evenmeqlodd} and
  Definition~\ref{upper}\eqref{uppereeo}.

  If $n$ is odd and $n > l+1$, then the right side of
  Figure~\ref{evenlln} depicts the situation. The generator $e_1$
  still cannot cancel. But logic very similar to before implies that
  $e_4$ and a one-dimensional subspace of $(e_2,e_3)$ must cancel,
  since $X$ has no exceptional pairs in $t = l+n$. Thus, the lower
  summand loses an exceptional pair (as predicted by
  Definition~\ref{lower}), and the upper summand turns an exceptional
  pair into a knight's move.

  Finally, suppose $n$ is even; luckily, this case is easier. First,
  if $n < l$, we want to show that only $e_4$ cancels (using the above
  notation). Again, $e_1$ cannot cancel. If some combination of $e_2$
  and $e_3$ cancelled, then we would have $\dim X_{l+n-1,3l+3n-1} <
  \dim X_{l+n-2,3l+3n-5}$, an impossibility since $X$ has no
  exceptional pairs in $t = l+n-2$. So both $e_2$ and $e_3$ survive.

  Since $n < l$, $P(-l,l,n)$ is quasi-alternating by \cite{Greene}, so
  $X_{l+n,3l+3n-3} = 0$ for $\delta$-grading reasons, and $e_4$ has
  nowhere to live. The cancellation is responsible for the next $(-1)$
  in the sequence $d^{(n)}$, and $e_2$ and $e_3$ are responsible for
  the $(2)$ following it.

  On the other hand, if $n > l$, then $X$ has two exceptional pairs in
  $t = l+n$ but $V_{l+n} = 0$, so none of the $e_i$ can cancel. If $n
  = l$, the same argument holds: $X$ must have three exceptional pairs
  in $t = l+n$ but $\dim V_{l+n}$ is only two, so none of the $e_i$
  can cancel. Thus $e_1$ and (say) $e_2$ add an exceptional pair to
  the lower summand, and $e_3$ and $e_4$ add an exceptional pair to
  the upper summand. These additions are precisely what we were
  expecting, completing the inductive argument.
\end{proof}

\subsection{$P(-l,l+1,n)$ for even $l$.}

Now we will make a similar computation for $P(-l,l+1,n)$ which we will
use as the base case in the induction to follow. As with odd $l$, we
do not need to start with $n=0$. We can use $n=l$ instead; since
$P(-l,l+1,l) = P(-l,l,l+1)$, the previous section tells us the formula
for $Kh(P(-l,l+1,l))$.

\begin{lemma}\label{evenhalfway} 
  For $n \geq l+1$, $Kh(P(-l,l+1,n))$ is given by the formula in
  Theorem~\ref{maintheorem}.
\end{lemma}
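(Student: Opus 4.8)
The plan is to induct on $n$, in close parallel with Lemma~\ref{llplus1lemma}, but carrying the induction through for \emph{all} $n \ge l+1$ (both parities of $n$), since $P(-l,l+1,n)$ is to serve as the base case for the general even-$l$ induction. For the base case take $n=l$: although this value is excluded from the statement, $P(-l,l+1,l)=P(-l,l,l+1)$, and since $l+1\ge l$, Theorem~\ref{evenhalfway0} already gives $Kh(P(-l,l,l+1))$ in the form of Theorem~\ref{maintheorem}. One should note that the sequences appearing in Definition~\ref{lower}\eqref{evenmeqlodd} and Definition~\ref{upper}\eqref{uppereeo} for $P(-l,l,l+1)$ degenerate, when $n=l+1$, to the ones appearing in Definition~\ref{lower}\eqref{evenmneql} and Definition~\ref{upper}\eqref{uppereoo}, so the two descriptions of the base case agree.

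Now assume the result for $P(-l,l+1,n-1)$ and consider $P(-l,l+1,n)$ with $n\ge l+1$. Resolve the top crossing of the rightmost strand; in the $RL$ orientation this is a positive crossing. Its unoriented resolution is a diagram for the unknot $U$ (since $P(-l,l+1)=T(2,1)$ is the unknot), and its oriented resolution is a diagram for $P(-l,l+1,n-1)$. Computing $\epsilon$ by counting negative crossings, the first sequence of Theorem~\ref{skein} becomes
\[
\cdots \xrightarrow{f} q^{3\epsilon+2}t^{\epsilon+1}Kh(U) \longrightarrow Kh(P(-l,l+1,n)) \longrightarrow q\,Kh(P(-l,l+1,n-1)) \xrightarrow{f} \cdots,
\]
and we write $W$, $X$, $V$ for its three terms. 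Using the $RL$ row of Table~\ref{GradingShift} and the $q$-shift above, one checks that the lower summand of $V$ is based exactly at the point prescribed for the lower summand of $Kh(P(-l,l+1,n))$ (note $\tau_L=0$ throughout, while $\sigma_L=n+m-1$ increases by one as $n$ does, matching the $q$-shift), and that its $t$-gradings are disjoint from those of $W$; hence the lower summand passes through untouched, and we need only analyze cancellations of $U_V\oplus W$, where $U_V$ is the upper summand of $V$. Likewise $V$ and $W$ lie in the same two adjacent $\delta$-gradings.

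\textbf{Case $n$ even.} Then $P(-l,l+1,n-1)$ is a knot, with a single exceptional pair at $t=0$, while $P(-l,l+1,n)$ is a two-component link; by Proposition~\ref{linking} it has exactly two exceptional pairs, at $t=0$ and $t=2\,lk$, and these are exactly the exceptional pairs of $V$ and of $W$ respectively (in particular $W$'s exceptional pair is not in the $t$-grading immediately above $V$'s, since $n-l\ge 2$). Thus $E_X\ge E_V+E_W$, so Lemma~\ref{noeps} forces $X=V\oplus W$, and comparing with Definition~\ref{lower}\eqref{evenmneql} and Definition~\ref{upper}\eqref{uppereoe} finishes this case.

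\textbf{Case $n$ odd.} Then $P(-l,l+1,n-1)$ is a two-component link with exceptional pairs at $t=0$ and $t=2\,lk$, while $P(-l,l+1,n)$ is a knot with its one exceptional pair at $t=0$; two exceptional pairs must disappear. One checks that the exceptional pair of $W$ lies one $t$-grading above the highest $t$-grading of $V$ (where $V$ has its top exceptional pair), so Lemma~\ref{hstandard} forces a standard cancellation there. After removing the four generators of that cancellation (Remark~\ref{nomore}), $W$ has lost its only exceptional pair and $V$ retains only the one at $t=0$, so Lemma~\ref{noeps} rules out any further cancellation. Tracking the effect of this single standard cancellation — it turns the top exceptional pair of the upper summand into a knight's move, converting the trailing $(1)$ of the defining sequence into the knight's-move tail appearing in Definition~\ref{upper}\eqref{uppereoo} — one verifies that $X$ has the form predicted by Theorem~\ref{maintheorem}.

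I expect the bookkeeping in the odd-$n$ case to be the only real obstacle: one must show that \emph{exactly} one cancellation occurs and that it is the standard one in the expected position (the lemmas and remark of Section~3 are tailored to precisely this), and then check that the resulting bigraded vector space matches the $b$- and $c$-sequence descriptions of Definitions~\ref{lower} and \ref{upper}. The first step of the induction, $n=l+1$, may warrant a separate look, since there $h=n-m=0$ and the $U$-summand formulas are slightly degenerate; but there $V=Kh(P(-l,l,l+1))$ is known explicitly from Theorem~\ref{evenhalfway0} and $W$ is the unknot, so $X$ can be checked by hand.
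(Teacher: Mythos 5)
Your proposal is correct and follows essentially the same route as the paper: resolve the top crossing of the rightmost strand, induct from the base case $P(-l,l+1,l)=P(-l,l,l+1)$ known via Theorem~\ref{evenhalfway0}, and split on the parity of $n$. The only stylistic difference is that you deploy Lemmas~\ref{noeps}, \ref{hstandard}, and Remark~\ref{nomore} systematically, whereas the paper handles the cancellation in the odd-$n$ case by a direct inspection of the generators $e_1,e_2$ of $W$ near the top of $V$; both arguments establish exactly the same standard cancellation (or absence of cancellation). One small imprecision: in the even-$n$ case you cite the disjointness of $t$-gradings of $L_V$ and $W$, but when $n$ is odd and $n=l+1$ the lower summand actually reaches $t=2l+1=l+n$ — there the non-cancellation is for $\delta$-grading reasons rather than $t$-grading disjointness, so this detail is worth stating carefully.
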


\begin{proof} Resolve the top crossing on the rightmost strand, a
  positive crossing. Our diagram for $P(-l,l+1,n)$ has no negative
  crossings, while the unoriented resolution (a diagram for the
  unknot) has $l+n-1$ negative crossings. The oriented resolution is a
  diagram for $P(-l,l+1,n-1)$. Hence the skein sequence is
  \[
  \xymatrix{ \ar[r]^-{f} & q^{3l+3n-1} t^{l+n} Kh(U) \ar[r] &
    Kh(P(-l,l+1,n)) \ar[r] & q Kh(P(-l,l+1,n-1)) \ar[r]^-{f} &,}
  \]
  where $U$ is the unknot, with Khovanov polynomial $q +
  q^{-1}$. Denote the left, middle, and right terms by $W$, $X$, and
  $V$ respectively. $W$ has an exceptional pair in $t = l+n$ (in fact,
  $W$ consists of this exceptional pair). If $n$ is even, $V$ has one
  exceptional pair in $t = 0$, and $X$ has two exceptional pairs ($t =
  0$ and $t = l+n$) since the linking number of the two components of
  $P(-l,l+1,n)$ is $\frac{l+n}{2}$. If $n$ is odd, $V$ has two
  exceptional pairs ($t = 0$ and $t = l+n-1$), and $X$ has one
  exceptional pair in $t = 0$. It is clear that the upper and lower
  summands of $V$ are based at the correct points to become the upper
  and lower summands we want for $X$, after some cancellations.

  Figure~\ref{llplus1even} depicts some generators of $V$ and $W$. For
  $t \geq l+n-2$, $V$ has only the generators shown. One can check
  this statement either inductively, if $n > l+1$, or by looking at
  the formula for $P(-l,l,l+1) = P(-l,l+1,l)$ if $n = l+1$.

  If $n$ is even then there is no cancellation since $V_{l+n,*}=0$,
  $\dim W_{l+n,*} = 2$, and $X$ has an exceptional pair in $t =
  l+n$. So $X = V \oplus W$, which agrees with our
  formulas.

  \begin{figure}
    \begin{center}
      \input{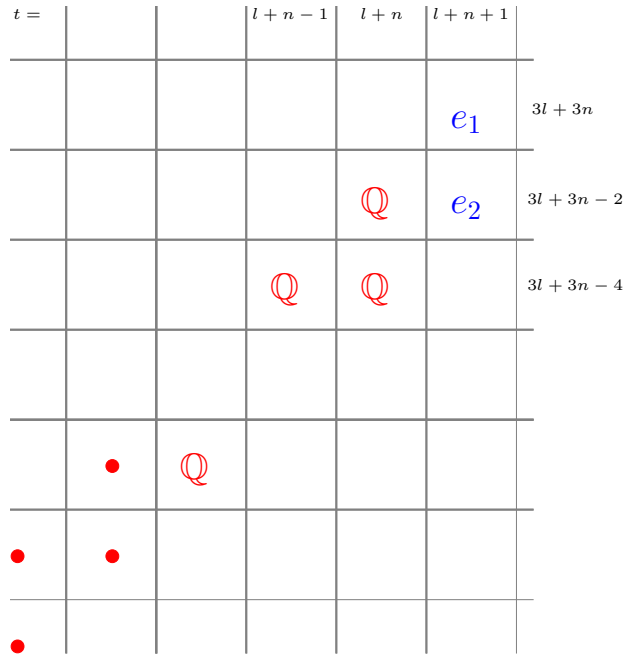}
    \end{center}
    \caption{\label{llplus1even}{The case of odd $n$ in
        Lemma~\ref{evenhalfway}. In $t \geq l+n-2$, $V$ is zero except
        for the copies of $\Q$ shown. Red denotes $V$ and blue denotes
        $W$.}}
  \end{figure}

  If $n$ is odd, pick a basis $\{e_1,e_2\}$ for $W$, in $q$-gradings
  $3l+3n$ and $3l+3n-2$ respectively. Now, $V_{l+n-1,3l+3n} = 0$; this
  follows from our formulas but can be seen most easily in
  Figure~\ref{llplus1even}. Hence $e_1$ cannot cancel. On the other
  hand, $e_2$ must cancel: otherwise it would be in a knight's move,
  but $V_{l+n-1,3l+3n-6} = 0$ and $V_{l+n+1,3l+3n+2} = 0$. Thus our
  formula is verified.
\end{proof}

\subsection{$P(-l,m,n)$ for even $l$.}

\begin{theorem}\label{finalthm}
  When $l$ is even, $Kh(P(-l,m,n)_{RL})$ is given by
  the formula in Theorem~\ref{maintheorem}.
\end{theorem}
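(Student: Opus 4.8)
The plan is to induct on $m$, exactly in the spirit of Theorems~\ref{oddloddnthm} and~\ref{oddlevennthm}. The case $m=l$ is already Theorem~\ref{evenhalfway0} (its $n\ge l$ clause) and the case $m=l+1$ is Lemma~\ref{evenhalfway}, both valid for every admissible $n$; so I would fix $m\ge l+2$ and assume the formula for $P(-l,m-1,n)_{RL}$ for all $n\ge m-1$.

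For the inductive step I would resolve the top crossing of the middle twist region. Since the $RL$ orientation makes every crossing of $P(-l,m,n)$ positive, this is a positive crossing, so I would use the first sequence of Theorem~\ref{skein}. The oriented resolution is $P(-l,m-1,n)_{RL}$, known by induction, and the unoriented resolution is the $(n-l,2)$ torus link (with the handedness and orientation inherited from the diagram), whose rational Khovanov homology is classical: thin, supported in two adjacent $\delta$-gradings, and built from knight's moves together with one exceptional pair when $n-l$ is odd and two when $n-l$ is even. Reading $\epsilon$ off the standard diagrams and plugging the shifts into the sequence, I would check against the $RL$ row of Table~\ref{GradingShift} that, writing $V:=q\,Kh(P(-l,m-1,n)_{RL})=L\oplus U$ and $W:=q^{3\epsilon+2}t^{\epsilon+1}Kh(T_{n-l,2})$, the summands $L$ and $U$ already sit at exactly the bigradings demanded for the lower and upper summands of $Kh(P(-l,m,n)_{RL})$; in particular $L$ is based at $(0,n+m-1)$ and is literally unchanged from $P(-l,m-1,n)$, which is consistent because Definition~\ref{lower}\eqref{evenmneql} makes the lower summand depend only on $l$.

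Next I would determine $X:=Kh(P(-l,m,n)_{RL})$ as a well-structured cancellation of $V\oplus W$. The summand $L$ occupies only the low columns (roughly $t=0,\dots,2l+1$), whereas $W$ is concentrated in columns strictly above everything in $L$; since only $V$-generators can cancel against $W$-generators, no generator of $L$ can be cancelled, so $X=L\oplus X'$ with $X'$ a cancellation of $U\oplus W$, and $U$ and $W$ share the same two $\delta$-gradings. The exceptional-pair data of $X$ is pinned down by Proposition~\ref{linking} from the linking numbers of the components of $P(-l,m,n)_{RL}$. Comparing that data with $E_U+E_W$ --- which depends only on the parities of $m$ and $n$, i.e.\ on which of the four cases \eqref{uppereoo}, \eqref{uppereoe}, \eqref{uppereeo}, \eqref{uppereee} of Definition~\ref{upper} we are in --- I would apply Lemma~\ref{lstandard} or Lemma~\ref{hstandard} to force a standard cancellation between $W$'s exceptional-pair column and the adjacent column of $U$ in precisely those parity cases where $X$ has fewer exceptional pairs there than $U\oplus W$ does, and conclude there is no cancellation at all in the remaining cases. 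Either way, Remark~\ref{nomore} and Lemma~\ref{noeps} would then rule out any further cancellations, since deleting the cancelled block leaves residual spaces with strictly fewer exceptional pairs than $X'$ requires.

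Finally I would read off the answer and match it to Definition~\ref{upper}: when no cancellation occurs, the generators of $W$ merely extend $U$'s defining sequence and supply the extra exceptional pair(s); when a standard cancellation occurs, it turns an exceptional pair of $U$ into a knight's move, and appending the rest of $W$ then modifies the tail of $U$'s sequence in exactly the predicted way (an $\overline{a}$-tail becomes a $\overline{c}$-tail, a constant run $(g/2)^h$ is refined into the alternating run appearing in the relevant clause, and the surviving exceptional pair lands on the stated index). Together with the fact that $L$ already has the right form and base point, this completes the induction. I expect the main obstacle to be purely bookkeeping: matching the post-cancellation sequence column by column against each parity case of Definition~\ref{upper}, while tracking the Khovanov homology and exceptional-pair positions of $T_{n-l,2}$ as they depend on the parity of $n-l$ (and, if needed, treating the degenerate low-overlap configurations near $m=l$ as separate sub-cases).
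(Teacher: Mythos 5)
Your proposal tracks the paper's proof essentially step for step: induct on $m$ with base cases $m=l$ (Theorem~\ref{evenhalfway0}) and $m=l+1$ (Lemma~\ref{evenhalfway}), resolve the top middle-strand crossing (positive in the $RL$ orientation), recognize the unoriented resolution as $T_{n-l,2}$, separate $X$ into $L\oplus X'$ by a column argument, and determine $X'$ from $U\oplus W$ using Lemmas~\ref{lstandard},~\ref{hstandard},~\ref{noeps} and Remark~\ref{nomore}, splitting by the parity of $m$ (and of $n$ when $m$ is odd). That is exactly the strategy used in the paper, so the plan is sound.

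One small wrinkle worth flagging: your claim that ``$W$ is concentrated in columns strictly above everything in $L$, so no generator of $L$ can be cancelled'' is not literally enough when $m=l+2$. The cancellation map raises $t$ by one, so a generator of $L$ at $t=2l+1$ could in principle cancel against a generator of $W$ at $t=l+m=2l+2$; for even $l$ the lower summand genuinely reaches $t=2l+1$, so the columns of $L$ and $W$ are not separated by a gap of size one. The paper handles this with a short ad hoc argument: in $t=2l+1$ the lower summand has a single generator belonging to a knight's move, and if it cancelled its knight's-move partner would be orphaned, contradicting well-structuredness of $X$. You gesture at ``treating the degenerate low-overlap configurations near $m=l$ as separate sub-cases,'' which is the right instinct, but this particular boundary case needs to be spelled out; once it is, the rest of your argument goes through exactly as you sketch.
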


\begin{proof}
  We already did the case of $m = l$, so we can induct on $m \geq
  l+1$. The base case $m = l+1$ was also done above. Assume that our
  formula holds for $P(-l,m-1,n)$; we will prove it for $P(-l,m,n)$
  using the skein exact sequence for the top crossing in the middle
  strand (a positive crossing). The diagram for $P(-l,m,n)$ has no
  negative crossings. The unoriented resolution is a diagram for the
  right-handed torus link $T_{n-l,2}$. If $n$ is odd, the unoriented
  resolution diagram has $l+m-1$ negative crossings, and $T_{n-l,2}$
  is a knot. If $n$ is even, $T_{n-l,2}$ is a $2$-component link;
  orient it positively, so that the unoriented resolution diagram has
  $l+m-1$ negative crossings. The sequence is
  \[
  \xymatrix{ \ar[r]^-{f} & q^{3l+3m-1} t^{l+m} Kh(T_{n-l,2}) \ar[r] &
    Kh(P(-l,m,n)) \ar[r] & q Kh(P(-l,m-1,n)) \ar[r]^-{f} &,}
  \]
  where $T_{n-l,2}$ is positively oriented when $n$ is even. Call the
  three terms $W$, $X$, and $V$ as usual, and write $V = L \oplus
  U$. As before, $L$ and $U$ are based at the correct points to become
  the upper and lower summands in our desired formula for $X$, after
  cancellations.

  In fact, if $m$ is even, there are no cancellations. To show this
  fact, first suppose $m > l+2$. Nothing in $W$ can cancel with
  anything in $L$ for $t$-grading reasons, so $X = L \oplus X'$ where
  $X'$ is a cancellation of $U \oplus W$.

  Now, $U$ has no exceptional pairs if $n$ is odd, while if $n$ is
  even, $U$ has an exceptional pair in $l+n$. If $n$ is odd, $W$ has
  one exceptional pair in $t = l+m$, and if $n$ is even it has an
  additional one in $t = m+n$. In either case, however, $X'$ must have
  an exceptional pair everywhere that $U$ or $W$ does. Hence $E_{X'}
  \geq E_U + E_W$, so by Lemma~\ref{noeps}, $X' = U \oplus W$. We are
  now done, since adding $W$ to the upper summand $U$ of $V$ produces
  the upper summand we want for $X$.

  The remaining case for even $m$ is $m = l+2$. The same argument
  applies as soon as we can show that nothing in $W$ cancels with
  anything in $L$. But any such cancellation would need to occur
  between $t = 2l+1$ and $t = 2l+2$. In the column $t = 2l+1$, $L$ has
  a single generator, which is part of a knight's move. It cannot
  cancel because, if it did, its knight's move partner could not be
  part of any knight's move or exceptional pair in $X$. So we are done
  with even $m$.

  If $m$ is odd, a few cancellations occur. We may again consider
  cancellations $X'$ of $U \oplus W$. If $n$ is odd, $W$ has one
  exceptional pair in its lowest $t$-grading, $t = l+m$. $V$ has an
  exceptional pair in $t = l+m-1$ and $X$ does not, so by
  Lemma~\ref{lstandard}, there must be a standard cancellation between
  $t$-gradings $l+m-1$ and $l+m$.

  \begin{figure}
    \begin{center}
      \input{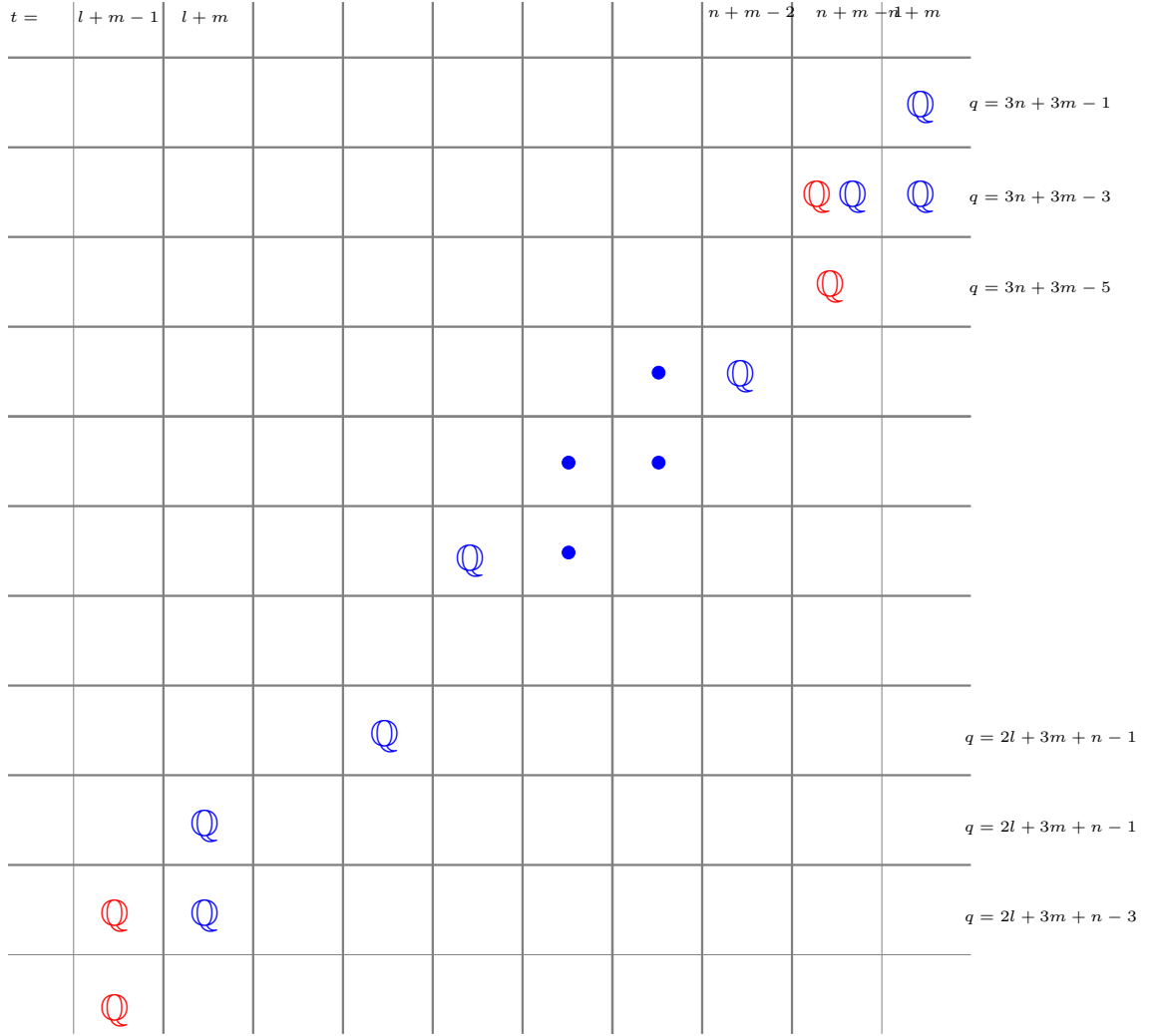}
    \end{center}
    \caption{\label{evenlmn}{The case of odd $n$ in
        Theorem~\ref{finalthm}. Red represents $V$ and blue represents
        $W$.}}
  \end{figure}

  Now, any further cancellations would be cancellations of $U' \oplus
  W'$ as in Remark~\ref{nomore}, but $U'$ and $W'$ have no exceptional
  pairs and are contained in $\delta = n+m-1$ and $\delta = n+m-3$, so
  Lemma~\ref{noeps} precludes any cancellations.

  If $n$ is even (but $m$ is still odd), the same analysis applies to
  the lower exceptional pair of $W$. Now, however, $W$ has an
  additional exceptional pair in $t = (n-l) + (l+m) = n+m$. Also, $X$
  has no exceptional pair in this $t$-grading, and $V$ has an
  exceptional pair in $t = n+m-1$. Hence a standard cancellation must
  occur between $t=n+m-1$ and $t = n+m$ by Lemma~\ref{hstandard}.

  As before, any further cancellations would be cancellations of $U'
  \oplus W'$ as in Remark~\ref{nomore}, but $U'$ and $W'$ have no
  exceptional pairs and are contained in $\delta = n+m-1$ and $\delta
  = n+m-3$, so Lemma~\ref{noeps} precludes any cancellations. We can
  now easily check that we have completed the proof of our general
  formula.
\end{proof}

\bibliographystyle{plain} \bibliography{biblio}

\end{document}